  \documentclass[11pt,letterpaper]{amsart}
\makeatletter
 \def\@textbottom{\vskip \z@ \@plus 1pt}
 \let\@texttop\relax
\makeatother
\usepackage{amsxtra}
\usepackage[all]{xy}
\usepackage{palatino}
\usepackage{commath}
\usepackage{mathrsfs}
\usepackage{comment}

\usepackage{tikz-cd}
\usepackage[bookmarks=false]{hyperref}

\usepackage{amssymb,amsmath,amsthm,epsf,epsfig,dsfont,bbm}
 
\usepackage{upref, eucal}

\numberwithin{equation}{section}
\textheight=8.21in
\textwidth=6.25in
\oddsidemargin=.25in
\evensidemargin=.25in
\topmargin=0in
\headheight=.1in
\headsep=.5in
\footskip=.75in

\usepackage{amssymb,amsmath,amsthm,epsf,epsfig,dsfont,bbm}

\usepackage{latexsym}
\usepackage{mathtools}
\usepackage{xcolor}

\DeclareMathOperator{\Spec}{\mathrm{Spec}}

\DeclareUnicodeCharacter{0301}{\'{e}}

\newcommand{\beqar}{\begin{eqnarray*}}
\newcommand{\eeqar}{\end{eqnarray*}}


\newcommand{\oldmarginpar}[1]{}

\begin{document}

\newcommand\J{\mathfrak{J}}
\newcommand\A{\mathbb{A}}
\newcommand\C{\mathbb{C}}
\newcommand\G{\mathbb{G}}
\newcommand\N{\mathbb{N}}
\newcommand{\T}{\mathbb{T}}
\newcommand{\cT}{\mathcal{T}}
\newcommand\sO{\mathcal{O}}
\newcommand\sE{{\mathcal{E}}}
\newcommand\tE{{\mathbb{E}}}
\newcommand\sF{{\mathcal{F}}}
\newcommand\sG{{\mathcal{G}}}
\newcommand\GL{{\mathrm{GL}}}
\newcommand{\HH}{\mathrm H}
\newcommand\mM{{\mathrm M}}
\newcommand\fS{\mathfrak{S}}
\newcommand\fP{\mathfrak{P}}
\newcommand\fQ{\mathfrak{Q}}
\newcommand\Qbar{{\bar{\Q}}}
\newcommand\sQ{{\mathcal{Q}}}
\newcommand\sP{{\mathbb{P}}}
\newcommand{\Q}{\mathbb{Q}}
\newcommand{\tH}{\mathbb{H}}
\newcommand{\Z}{\mathbb{Z}}
\newcommand{\R}{\mathbb{R}}
\newcommand{\F}{\mathbb{F}}
\newcommand{\D}{\mathfrak{D}}
\newcommand\gP{\mathfrak{P}}
\newcommand\Gal{{\mathrm {Gal}}}
\newcommand\SL{{\mathrm {SL}}}

\newcommand{\legendre}[2] {\left(\frac{#1}{#2}\right)}
\newcommand\iso{{\> \simeq \>}}
\newcommand{\cX}{\mathcal{X}}
\newcommand{\cJ}{\mathcal{J}}
\newtheorem{thm}{Theorem}
\newtheorem{theorem}[thm]{Theorem}
\newtheorem{cor}[thm]{Corollary}
\newtheorem{conj}[thm]{Conjecture}
\newtheorem{prop}[thm]{Proposition}
\newtheorem{lemma}[thm]{Lemma}
\theoremstyle{definition}
\newtheorem{definition}[thm]{Definition}
\theoremstyle{remark}
\newtheorem{remark}[thm]{Remark}
\newtheorem{example}[thm]{Example}
\newtheorem{claim}[thm]{Claim}
\newtheorem{lem}[thm]{Lemma}

\theoremstyle{definition}
\newtheorem{dfn}{Definition}

\theoremstyle{remark}

\theoremstyle{remark}
\newtheorem*{fact}{Fact}
\makeatletter
\def\imod#1{\allowbreak\mkern10mu({\operator@font mod}\,\,#1)}
\makeatother
\newcommand{\mcF}{\mathcal{F}}
\newcommand{\mcG}{\mathcal{G}}
\newcommand{\mcM}{\mathcal{M}}
\newcommand{\mcO}{\mathcal{O}}
\newcommand{\mcP}{\mathcal{P}}
\newcommand{\mcS}{\mathcal{S}}
\newcommand{\mcV}{\mathcal{V}}
\newcommand{\mcW}{\mathcal{W}}
\newcommand{\gS}{\mathcal{S}}
\newcommand{\cO}{\mathcal{O}}
\newcommand{\cC}{\mathcal{C}}
\newcommand{\mfa}{\mathfrak{a}}
\newcommand{\mfb}{\mathfrak{b}}
\newcommand{\mfH}{\mathfrak{H}}
\newcommand{\mfh}{\mathfrak{h}}
\newcommand{\mfm}{\mathfrak{m}}
\newcommand{\mfn}{\mathfrak{n}}
\newcommand{\mfp}{\mathfrak{p}}
\newcommand{\mfq}{\mathfrak{q}}
\newcommand{\cF}{\mathfrak{F}}
\newcommand{\mrB}{\mathrm{B}}
\newcommand{\mrG}{\mathrm{G}}
\newcommand{\gG}{\mathcal{G}}
\newcommand{\mrH}{\mathrm{H}}
\newcommand{\mH}{\mathrm{H}}
\newcommand{\mrZ}{\mathrm{Z}}
\newcommand{\Ga}{\Gamma}
\newcommand{\cyc}{\mathrm{cyc}}
\newcommand{\Fil}{\mathrm{Fil}}
\newcommand{\mrp}{\mathrm{p}}
\newcommand{\PGL}{\mathrm{PGL}}
\newcommand{\x}{{\mathcal{X}}}
\newcommand{\Sp}{\textrm{Sp}}
\newcommand{\ab}{\textrm{ab}}

\newcommand{\lra}{\longrightarrow}
\newcommand{\ra}{\rightarrow}
\newcommand{\rai}{\hookrightarrow}
\newcommand{\ras}{\twoheadrightarrow}

\newcommand{\repr}{\rho_{f,\wp}|_{G_p}}
\newcommand{\GRF}{{\rho}_{f,\wp}}

\newcommand{\lan}{\langle}
\newcommand{\ran}{\rangle}

\newcommand{\mo}[1]{|#1|}

\newcommand{\hw}[1]{#1+\frac{1}{2}}
\newcommand{\mcal}[1]{\mathcal{#1}}
\newcommand{\trm}[1]{\textrm{#1}}
\newcommand{\mrm}[1]{\mathrm{#1}}
\newcommand{\car}[1]{|#1|}
\newcommand{\pmat}[4]{ \begin{pmatrix} #1 & #2 \\ #3 & #4 \end{pmatrix}}
\newcommand{\bmat}[4]{ \begin{bmatrix} #1 & #2 \\ #3 & #4 \end{bmatrix}}
\newcommand{\pbmat}[4]{\left \{ \begin{pmatrix} #1 & #2 \\ #3 & #4 \end{pmatrix} \right \}}
\newcommand{\psmat}[4]{\bigl( \begin{smallmatrix} #1 & #2 \\ #3 & #4 \end{smallmatrix} \bigr)}
\newcommand{\bsmat}[4]{\bigl[ \begin{smallmatrix} #1 & #2 \\ #3 & #4 \end{smallmatrix} \bigr]}

\makeatletter
\def\imod#1{\allowbreak\mkern10mu({\operator@font mod}\,\,#1)}
\makeatother
\title{The Heisenberg covering of the Fermat curve}

\author{Debargha Banerjee}
\address{Indian Institute of science, education and research, Pune, India}
\author{Lo\"ic Merel}
\address{Universit\'e Paris Cit\'e and Sorbonne Universit\'e, CNRS, IMJ-PRG, F-75013 Paris, France.}
\thanks{The author was partially supported by the SERB grant MTR/2017/000357 and CRG/2020/000223. The first named author is deeply indebted to Professor Yuri Manin for several stimulating conversation at the MPIM}
\begin{abstract}
For $N$ integer $\ge1$, K. Murty and D. Ramakrishnan defined the $N$-th Heisenberg curve, as the compactified quotient $X'_N$ of the upper half-plane by a certain non-congruence subgroup of the modular group. They ask whether the Manin-Drinfeld principle holds, namely if the divisors supported on the cusps of those curves are torsion in the Jacobian. We give a model over ${\bf Z}[\mu_N,1/N]$ of  the $N$-th Heisenberg curve as covering of the $N$-th Fermat curve. We show that the Manin-Drinfeld principle holds for $N=3$, but not for $N=5$. We show that the description by generator and relations due to Rohrlich of the cuspidal subgroup of the Fermat curve is explained by the Heisenberg covering, together with a higher covering of a similar nature.
The curves $X_N$ and the classical modular curves $X(n)$, for $n$ even integer, both dominate $X(2)$, which produces a morphism between jacobians $J_N\rightarrow J(n)$. We prove that the latter has image $0$ or an elliptic curve of $j$-invariant $0$. In passing, we give a description of the homology of $X'_{N}$.
 \end{abstract}

\subjclass[2010]{Primary: 11D11, Secondary: 11F11, 11G05, 11G30}
\keywords{Fermat's curves,  Modular symbols, Heisenberg curves}
\maketitle
 \setcounter{tocdepth}{1}
\tableofcontents{}

\section{Introduction}
\label{intro}
Let $\Ga$ be a subgroup of finite index of $\SL_2(\Z)$. This subgroup acts by homographies on the complex upper half-plane $\mfH$. Consider the corresponding modular curve $Y_\Ga=\Ga\backslash \mfH$, and its compactification obtained by adding the cusps $X_\Ga$. We say that $X_\Ga$ {\it satisfies the Manin-Drinfeld} principle if any cuspidal ({\it i.e.} supported on the cusps) divisor of degree $0$ is torsion in the Jacobian of $X_\Ga$. Manin and Drinfeld proved that it is the case when $\Ga$ is a congruence subgroup.

For a subgroup of finite index (not necessarily a congruence subgroup), K. Murty and Ramakrishnan  ~\cite{MR983619} give an analytic criterion for the Manin-Drinfeld principle to be satisfied. 
As an illustrative example, Murty and Ramakrishnan consider modular curves attached to certain subgroups of $\Ga(2)$: Fermat curves, and what they propose to call Heisenberg curves. We revisit those examples. 
In \cite{BanerjeeMerel}, we reconsider this question and give an analytic criterion of a different nature, but also based on Eisenstein series ; this is unconnected to the present work, which is purely algebraic in nature.

The Heisenberg curves are defined as follows from the complex analytic point of view. Let $A$ and $B$ be the classes of the matrices $\begin{pmatrix}
 1 & 2 \\
 0 & 1 \\
\end{pmatrix}$ and $\begin{pmatrix}
 1 & 0 \\
 2 & 1 \\
\end{pmatrix}$ 
respectively in 
${\bar\Ga}(2)=\Ga(2)/\{\pm1\}$. These matrices generate freely the group ${\bar\Ga}(2)$. Let $C=ABA^{-1}B^{-1}$.

Let $N$ be an integer $>0$. Denote by $\Phi_N$ the kernel of the morphism ${\bar\Ga}(2)\rightarrow(\Z/N\Z)^2$ which to $A$ associates $(1,0)$ and to $B$ associates $(0,1)$. The corresponding modular curve is the {\it Fermat modular curve} and is denoted by $X_{N}$. It can be identified with the complex points of the Fermat curve $F_N$ (see for instance ~\cite{MR0441978}, ~\cite{MR681120}). Let $\Phi_N'$ be the subgroup of $\Phi_N$ generated by $A^N$, $B^N$, $C^N$ and the third term $[{\bar\Ga}(2),[{\bar\Ga}(2),{\bar\Ga}(2)]]$ in the descending central series of ${\bar\Ga}(2)$.  An exact sequence of groups follows
$$
1\rightarrow \Phi_N'\rightarrow {\bar\Ga}(2)\rightarrow H_N\rightarrow 1,
$$
where $H_N$ is a central extension of $({\Z/N\Z})^{2}$ by ${\Z/N\Z}$ (coinciding with the $\Z/N\Z$-points of the Heisenberg group).
The {\it $N$-th Heisenberg modular curve}, in the sense of Murty and Ramakrishnan, is $X'_N=X_{\Phi'_N}$.

Let $\Q(\mu_{N})$ be a cyclotomic extension of $\Q$ generated by $N$-th roots of unity. Denote by $\Z[\mu_{N}]$ its ring of integers.
The covering $X'_{N}\rightarrow X_{N}$ extends to a morphism $F'_{N}\rightarrow F_{N}$ of curves over $\Q(\mu_{N})$ that we call the {\it Heisenberg covering of the Fermat curve}. 

\begin{theorem}
\label{Modelthm}
Suppose $N$ is an odd integer. The Heisenberg modular curve $X'_N$ extends to a smooth projective scheme ${\mathcal{F}}'_N$ of relative dimension one over ${\rm Spec}({\Z}[\mu_{N},1/N])$ given by the following model: 
$$
X^{N}+Y^{N}=Z^{N}
$$ 
and, for every primitive $N$-th root of unity $\zeta$ in $\Q(\mu_{N})$
$$
\prod_{j=1}^{(N-1)/2}(Y-\zeta^{-j}Z)^{j}T_{\zeta}^{N}=\prod_{j=1}^{(N-1)/2}(Y-\zeta^{j}Z)^{j}U_{\zeta}^{N}.
$$

\end{theorem}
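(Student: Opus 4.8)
The Heisenberg covering $X'_N \to X_N$ is, analytically, the covering associated to the subgroup $\Phi'_N \subset \bar\Gamma(2)$, and its Galois group over $X_N$ is $\mathrm{Gal}(X'_N/X_N) = \Phi_N/\Phi'_N$, which is the commutator subgroup of $H_N$ inside its own center, i.e.\ a cyclic group of order $N$ (generated by the image of $C$). So the first task is to identify this $\mathbf{Z}/N\mathbf{Z}$-covering of the Fermat curve $F_N$ explicitly as a Kummer covering, i.e.\ as obtained by adjoining an $N$-th root of a rational function $g$ on $F_N$. Since the covering is abelian of exponent $N$ and everything can be defined over $\mathbf{Q}(\mu_N)$ where $\mu_N$ is available, Kummer theory says such a $g$ exists, well-defined up to $N$-th powers and scalars; the content is to pin down its divisor. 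The covering $X'_N \to X_N$ is unramified over the open modular curve $Y_N$ and can only be ramified at cusps, so $\mathrm{div}(g)$ must be supported on the cusps of $F_N$ and all its multiplicities must be $\equiv 0 \pmod N$ — wait, more precisely, the ramification indices along the cusps are read off from $\Phi_N/\Phi'_N$, so one must compute, for each cusp, the image in $\Phi_N/\Phi'_N \cong \mathbf{Z}/N\mathbf{Z}$ of the corresponding parabolic generator of $\Phi_N$. This is a group-theoretic computation inside $H_N$: the three natural cusps of $F_N$ lying over the three cusps $0,1,\infty$ of $X(2)$ have stabilizers generated (up to $\Phi'_N$) by $A^N$, $B^N$, $C^N$ — hence are \emph{unramified} in $X'_N \to X_N$ — while the remaining cusps, which form three $\mathbf{Z}/N\mathbf{Z}$-orbits indexed by the families appearing in the statement, have parabolic generators that are conjugates $w A^N w^{-1}$, $w B^N w^{-1}$, $w C^N w^{-1}$, and the commutator relations in $H_N$ show these map to nonzero multiples of $C$ proportional to $j$ for the $j$-th cusp in each family. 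This is the heart of the matter.

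**Locating the function $g$.** Granting the ramification data, the divisor of the sought function on $F_N$ is forced: $g$ has a zero (or pole) of order $j$ along the $j$-th cusp in the $0$-family and order $-j$ (or $j$) along the $j$-th cusp in the $\infty$-family, for $j = 1,\dots,(N-1)/2$, with the middle index range reflecting that cusps come in pairs $\{j, N-j\}$ identified appropriately, and the other cusps not appearing. On the Fermat curve $X^N + Y^N = Z^N$ the cusps are exactly the points where one of the linear forms $X$, $Y$, $Z$, or rather $Y - \zeta^k Z$ etc., vanishes — these are the intersections with the coordinate-type lines — and there is a classical dictionary (Rohrlich) between cuspidal divisors and the functions $\frac{Y - \zeta^i Z}{Y - \zeta^{i'} Z}$ and their multiplicative combinations. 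So the function $g$ must be, up to an $N$-th power and a constant, $g = \prod_{j=1}^{(N-1)/2}\big(\tfrac{Y - \zeta^j Z}{Y - \zeta^{-j} Z}\big)^{j}$ — reading off the zeros/poles along exactly the cusps carrying nontrivial ramification, with multiplicity $j$. Then $X'_N \to X_N$ is the normalization of $F_N$ in $\mathbf{Q}(\mu_N)(F_N)(g^{1/N})$, and setting $T_\zeta^N / U_\zeta^N = g$ (after clearing to a polynomial identity by multiplying numerator and denominator through) yields precisely the displayed equation. One must check this function is the correct one and not, say, $g^a$ for some $a \in (\mathbf{Z}/N\mathbf{Z})^\times$: this is fixed by matching one ramification index on the nose, or by a monodromy/generator comparison.

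**Integrality and smoothness over $\mathbf{Z}[\mu_N, 1/N]$.** Having the equations over $\mathbf{Q}(\mu_N)$, it remains to produce the model $\mathcal F'_N$ over $\mathbf{Z}[\mu_N, 1/N]$ and show it is smooth of relative dimension one. The Fermat scheme $X^N + Y^N = Z^N$ in $\mathbf{P}^2_{\mathbf{Z}[1/N]}$ is smooth (the partials $NX^{N-1}, NY^{N-1}, -NZ^{N-1}$ generate the unit ideal away from $N$). The Heisenberg layer is a $\mu_N$-torsor (Kummer covering) over the open part where $g$ is a unit, and over the cusps it is a cyclic covering tamely ramified of degree dividing $N$, hence — because $N$ is invertible and the $j$'s are $< N$, in particular the total ramification at each cusp is prime to the residue characteristics — the normalization is smooth over the base by Abhyankar's lemma / the standard structure of tame cyclic covers. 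Concretely, one exhibits the model by the two homogeneous-type equations in the statement, then checks smoothness fiber-by-fiber using the Jacobian criterion: away from $N$, in each chart, the relevant partial derivatives again involve a factor that is a unit (a power of $N$ times products of the nonvanishing $Y - \zeta^{\pm j}Z$ or of $T_\zeta, U_\zeta$), so the scheme is smooth. The restriction to $N$ odd is what makes the ramification pattern symmetric in $j \leftrightarrow N - j$ and lets the product run cleanly to $(N-1)/2$; for even $N$ the middle cusp $j = N/2$ behaves differently. The main obstacle is unambiguously the cusp-by-cusp ramification computation in step one — translating the abstract definition of $\Phi'_N$ into the statement "the parabolic generator at the $j$-th cusp of the $0$-family maps to $j \cdot [C]$ in $\Phi_N/\Phi'_N$" — since everything downstream is then forced by Kummer theory and the smoothness is a routine tame-covering argument.
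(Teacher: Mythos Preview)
Your central computation is wrong: the covering $X'_N\to X_N$ is \emph{unramified at every cusp}, not just at three ``natural'' ones. The parabolic generator at a cusp of $X_N$ above $\infty$ is a conjugate $gA^Ng^{-1}$ with $g\in\bar\Ga(2)$, and you claim the commutator relations in $H_N$ send this to a nonzero multiple of $[C]$ in $\Phi_N/\Phi'_N$. But $\Phi_N/\Phi'_N$ is precisely the center of $H_{N,N,N}$, so conjugation by any $g$ acts trivially on it; since $A^N\mapsto 0$, every conjugate $gA^Ng^{-1}\mapsto 0$ as well (and similarly for $B^N$ and $(AB^{-1})^N$). Concretely, in $H_N$ one has $BA^NB^{-1}=(z^{-1}x)^N=z^{-N}x^N=1$ because $z$ is central. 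So there is no ramification data to read off, and your mechanism for pinning down $g$ collapses. Relatedly, on $F_N$ the linear form $Y-\zeta^jZ$ vanishes to order $N$ (not $1$) at the cusp $a_j$, so your candidate $g=\prod_{j=1}^{(N-1)/2}\bigl(\tfrac{Y-\zeta^jZ}{Y-\zeta^{-j}Z}\bigr)^j$ has divisor $N\sum_j j([a_j]-[a_{-j}])$, i.e.\ already an $N$-th multiple, consistent with an unramified cover --- but then nothing in your argument distinguishes this $g$ from any other function with divisor in $N\cdot{\rm Div}^0(F_N)$ supported on cusps.

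What the paper actually does is identify the covering via the $N$-torsion of the Jacobian rather than via ramification. The divisor $D_A=\sum_i\{i\}[a_i]$ has order exactly $N$ in ${\rm Jac}(F_N)$ by Rohrlich's theorem, so an $N$-th root of $f_A=\prod_i(-y+\zeta^i)^{\{i\}}$ defines a connected unramified cyclic cover of degree $N$. One then checks directly that this cover factors through $X_{[\bar\Ga(2),[\bar\Ga(2),\bar\Ga(2)]]}$ (by showing $f_A^{1/N}$ is invariant under $[U,V]$ for $U\in\bar\Ga(2)$, $V\in[\bar\Ga(2),\bar\Ga(2)]$), and separately that \emph{any} unramified cyclic $N$-cover of $X_N$ factoring through the third lower-central-series quotient must be the Heisenberg cover (because $\Phi'_N$ is generated by $A^N,B^N,C^N$ and $\bar\Ga(2)_3$). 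This two-sided squeeze is what identifies $g$ with $f_A$; you would need to supply an argument of this kind. The smoothness step, by contrast, is close to what you sketch: the paper uses the Jacobian criterion directly at the cusps $(0,\zeta)$, where $\partial F_\zeta/\partial Y$ is a cyclotomic unit in $\Z[\mu_N,1/N]$.
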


It seems to have been known to Deligne (see a comment in ~\cite{MR983619}) that the generic fiber  $F'_{N}$ of ${\mathcal{F}}'_N$ can be defined over ${\Q}$, an assertion for which we provide a proof. 

Rohrlich ~\cite{MR0441978} (see also V\'elu ~\cite{MR582434}) has determined the cuspidal subgroup of $F_N$. In particular, he has shown that any cuspidal divisor on $F_N$ is of order dividing $N$. This description plays a key role in justifying the existence of the Heisenberg covering. 
We show that, by going further in the descending central series of $\Ga(2)$, $X'_N$ is covered by a modular curve $X''_N$, in such a way that $X''_N$ is still an abelian covering of the Fermat curve $X_{N}$. We do not describe algebraically $X''_N$.

We note that there has been a considerable interest in the cuspidal group of the Fermat curve. For instance, in ~\cite{MR488287}, p. 39, Mazur draws (or rather ``stretches'') an analogy between Fermat curves and modular curves. Such an analogy is somewhat strengthened by the fact that the Heisenberg covering is analogous to the familiar Shimura covering $X_1(N)\rightarrow X_0(N)$ between modular curves.

Like Murty and Ramakrishnan, our goal had been to illustrate our study of non-congruence subgroups by examining Heisenberg curves. 
We can  not determine in general whether such curves satisfy the Manin-Drinfeld principle. 
But we can show easily that the principle holds for $N=3$. Furthermore, for $N=3$, we study the connection between $F'_{3}$ and various modular curves.
By contrast, for $N=5$:

\begin{theorem}
\label{theoremN=5}
There exists a cuspidal divisor on $X'_5$ whose class in the jacobian of $X'_5$ is of infinite order.
\end{theorem}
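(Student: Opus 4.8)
The plan is to exhibit an explicit cuspidal divisor on $X'_5$ and show its class in $J'_5 = \mathrm{Jac}(X'_5)$ has infinite order by finding a quotient of $J'_5$ on which its image is non-torsion. First I would use Theorem \ref{Modelthm} to work with the concrete model of $\mathcal{F}'_5$ over $\Z[\mu_5, 1/5]$: the Fermat equation $X^5 + Y^5 = Z^5$ together with, for a chosen primitive fifth root of unity $\zeta$, the relation $(Y - \zeta^{-1}Z)(Y-\zeta^{-2}Z)^2 \, T_\zeta^5 = (Y - \zeta Z)(Y - \zeta^2 Z)^2\, U_\zeta^5$. The covering $F'_5 \to F_5$ is Galois with group the kernel $\Z/5\Z$ of $H_5 \to (\Z/5\Z)^2$, and the cusps of $X'_5$ lie above the $15$ cusps of $F_5$ (the points where one of $X, Y, Z$ vanishes). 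I would single out a difference of two cusps lying over the \emph{same} cusp of $F_5$ — say two points in a fibre of the degree-$5$ map — so that the divisor becomes visible precisely through the extra Heisenberg structure, i.e. through the function $T_\zeta/U_\zeta$ whose divisor is supported on cusps.

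The main step is a specialization/reduction argument. Pick a prime $\mathfrak{p}$ of $\Z[\mu_5, 1/5]$ above a rational prime $\ell$ where $\mathcal{F}'_5$ has good reduction, and consider the reduction $\overline{\mathcal{F}'_5}$ over the residue field $k = \F_{\ell^f}$. Since $\mathcal{F}'_5$ is smooth proper over $\Z[\mu_5,1/5]$ by Theorem \ref{Modelthm}, specialization $J'_5(\Q(\mu_5)) \to \overline{J'_5}(k)$ is injective on torsion (prime to $\ell$), so it suffices to show the reduction of our divisor class has order divisible by a prime $q \neq \ell$ that is \emph{too large} to occur — more precisely, to get a contradiction with infinite order being ruled out, the cleanest route is: show that for two well-chosen primes $\mathfrak{p}_1, \mathfrak{p}_2$ of good reduction, the orders of the reduced divisor class in $\overline{J'_5}(k_1)$ and $\overline{J'_5}(k_2)$ have an incompatible common divisor structure unless the class is non-torsion. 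Concretely I would compute, in each good-reduction fibre, the order of the cuspidal class using the group structure of the cuspidal subgroup: on $F_5$ Rohrlich's description gives all cuspidal divisors killed by $5$, and pulling back, the "new" part coming from $T_\zeta/U_\zeta$ contributes cusps whose mutual differences have orders that one can read off from the valuations of $T_\zeta/U_\zeta$ along the cuspidal divisors — these valuations involve the exponents $j$ in $\prod (Y - \zeta^{\mp j} Z)^j$, so they are $1$ and $2$, and the resulting cuspidal class has order computed from a $5 \times 5$ (or smaller) integer relation matrix whose determinant I would compute explicitly and show is divisible by distinct primes in the two reductions.

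An alternative and perhaps more robust route, which I would pursue in parallel, is to push the divisor class forward along a map to an elliptic curve and use the explicit arithmetic there. The last sentence of the abstract says the morphism $J_N \to J(n)$ (for $X_N, X(n)$ both dominating $X(2)$) has image $0$ or an elliptic curve of $j$-invariant $0$; similarly $F'_5$ will admit quotient maps to elliptic curves (quotients of the Jacobian of $F_5$ — e.g. the curves $y^5 = x^a(1-x)^b$ have quotients that are CM elliptic curves, and $F'_5$ dominates $F_5$). I would locate an elliptic quotient $\pi: J'_5 \to E$ with $E$ of conductor divisible only by $5$ (so with known, small Mordell--Weil torsion, computable by reduction) such that the image $\pi(D)$ of the difference of cusps $D$ is a non-torsion rational point; one checks non-torsion by exhibiting a point of infinite order via its image having non-trivial canonical height, or by the contrapositive of the reduction bound (reduce $E$ at two primes and observe $\pi(\bar D)$ has coprime orders). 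The concrete task is to compute $\pi(D)$ as an explicit point on a Weierstrass model using the function $T_\zeta/U_\zeta$, and verify infinite order numerically/arithmetically.

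The hard part will be \emph{controlling the pullback of Rohrlich's relations to $X'_5$} — i.e. understanding precisely which $\Z$-linear combinations of cusps of $X'_5$ are principal. On $F_5$ the cuspidal group is completely known, but the covering $X'_5 \to F_5$ introduces new cusps and new relations coming from the divisor of $T_\zeta/U_\zeta$, and disentangling the "old" relations (pulled back from $F_5$, hence of order dividing $5$) from genuinely new torsion versus the non-torsion part requires a careful bookkeeping of ramification of $X'_5 \to F_5$ along each of the $15$ cusps of $F_5$. Once the cuspidal relation lattice of $X'_5$ is pinned down, either the two-primes reduction comparison or the elliptic-quotient computation finishes the proof; I expect the elliptic-quotient approach to be the one that actually yields a clean, checkable certificate of infinite order, and I would present that as the main argument with the reduction bound as the verification that the produced point is not torsion.
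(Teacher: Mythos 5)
Your proposal is a strategy outline rather than a proof, and both of its concrete routes have gaps that the paper's actual argument is specifically designed to avoid. The reduction-at-two-primes idea is sound in principle (by the cited injectivity of torsion under specialization when $p-1>e$), but it is contingent on computations you have not performed and which are far from routine: $F'_5$ has genus $26$, so ``reading off'' the order of a cuspidal class in two reductions means computing in class groups of genus-$26$ curves over finite fields, and there is no a priori guarantee the two orders would disagree. More seriously, your fallback elliptic-quotient route rests on two misconceptions. First, the new part of $\mathrm{Jac}(F_5)$ decomposes into abelian surfaces with CM by $\Q(\mu_5)$ (the curves $y^5=x^a(1-x)^b$ have $2$-dimensional CM Jacobian factors since $[\Q(\mu_5):\Q]=4$), not CM elliptic curves; and any cuspidal class pulled back from $F_5$ is killed by $5$ by Rohrlich, so the non-torsion must live in the ``new'' part of $J'_5$. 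Second, Theorem \ref{theoremcomparison} says that for $N=5$ (since $3\nmid 5$) \emph{every} map $\theta_{5,\Gamma}$ to a congruence Jacobian is zero — so the result you cite from the abstract actually forecloses, rather than enables, detecting non-torsion through congruence modular elliptic quotients.

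The paper's proof uses a different and essential idea that your plan is missing. It passes to an intermediate quotient $C_5$ of $F'_5$ (of genus $6$, with $19$ cusps, corresponding to the inverse image of $\langle B\rangle$ in $H_{5,5,5}$), and argues by contradiction: assuming Manin--Drinfeld on $C_5$, it exploits the function $T+\zeta_1$, whose divisor $(1,-\zeta_1)+(y_1,-\zeta_1)+(y_2,-\zeta_1)-D$ is \emph{not} cuspidal generically but becomes the cuspidal divisor $3(1,\zeta_1)-D$ in characteristic $11$, where the quadratic $2Y^2+(2-2(\zeta^2+\zeta^{-2})-\zeta-\zeta^{-1})Y+2$ providentially acquires a double root at $Y=1$. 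Torsion specialization then forces $3(1,\zeta_1)-D$ and $3(1,\zeta_1)-3(1,\zeta_2)$ to be principal on $C_5$ itself, and combining with the divisor of $T+\zeta$ in characteristic $2$ produces maps of degree $2$ and $3$ from the fibre at $2$ to $\sP^1$; Castelnuovo--Severi then bounds the genus by $2$, contradicting genus $6$. The contradiction thus comes from gonality versus genus, not from comparing orders of a class at two primes, and the whole argument never requires pinning down the full cuspidal relation lattice of $X'_5$ — which is exactly the ``hard part'' your plan defers and does not resolve.
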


Let $\bar\Gamma'(2)$ be the subgroup of index $3$ of $\bar\Gamma(2)$ obtained by pulling back the $2$-Sylow subgroup of ${\rm PSL}_2(\Z/3\Z)$.
Let $\Gamma$ be a congruence subgroup of $\bar\Gamma(2)$.
Consider the correspondence $X'_N\rightarrow X_\Gamma$ obtained by combining pulling back to the modular curve $X_{\Gamma\cap \Phi'_N}$ with pushing to $X_\Gamma$.
It produces a morphism of abelian varieties between the jacobians or Riemann surfaces $\theta_{N,\Gamma}$ : $J'_N\rightarrow J_\Gamma$. 
In view of the following statement, we can hardly have any hope of establishing a limited form of the Manin-Drinfeld principle for Heisenberg curves using the classical Manin-Drinfeld theorem for congruence subgroups.

\begin{thm}
\label{theoremcomparison}
The morphism $\theta_{N,\Gamma}$ is zero if and only if either $3 \nmid N$ or $\Gamma$ is not contained in $\bar\Gamma'(2)$.
If $3 \mid N$ and $\Gamma$ is contained in $\bar\Gamma'(2)$, the image of $\theta_{N,\Gamma}$ is isogenous to an elliptic curve with $j$-invariant $0$. Furthermore, when $\Gamma$ is contained in $\bar\Gamma'(2)$, $\theta_{3,\Gamma}$ has finite kernel.
\end{thm}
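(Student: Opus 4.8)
The plan is to reduce the statement to a genus computation for an auxiliary modular curve, and then to control that genus by the interplay between the Heisenberg tower and the congruence condition.

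\emph{Identifying the image.} Write $\bar\Gamma_N$ for the image of $\Gamma$ in $H_N=\bar\Gamma(2)/\Phi'_N$, so that $X_{\Phi'_N\Gamma}=X'_N/\bar\Gamma_N$. Replacing $\Gamma\cap\Phi'_N$ by its normal core in $\bar\Gamma(2)$, I would realize $\pi_N^*H^1(X'_N)$ and $\pi_\Gamma^*H^1(X_\Gamma)$ as the subspaces of invariants, under $\Phi'_N$ and under $\Gamma$ respectively, inside the $H^1$ of the auxiliary curve; since $\Phi'_N$ is normal in $\bar\Gamma(2)$, the product of the two averaging projectors is again a projector, namely the one onto the invariants of the subgroup $\Phi'_N\Gamma$. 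Hence $\theta_{N,\Gamma}^{*}=\pi_{N*}\circ\pi_\Gamma^{*}$ on cohomology has image equal to the pullback of $H^1(X_{\Phi'_N\Gamma})$, and therefore the image of $\theta_{N,\Gamma}$ is isogenous to the Jacobian of $X'_N/\bar\Gamma_N$; in particular $\theta_{N,\Gamma}=0$ if and only if $X'_N/\bar\Gamma_N$ has genus $0$. Note that $\Phi'_N\Gamma$ contains the congruence subgroup $\Gamma$, so $X'_N/\bar\Gamma_N$ is simultaneously a congruence modular curve and a quotient of $X'_N$, and it is covered by $X_{\overline{\Phi'_N}}$, where $\overline{\Phi'_N}$ is the smallest congruence subgroup containing $\Phi'_N$.

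\emph{The role of $3\mid N$ and of $\bar\Gamma'(2)$.} When $3\mid N$ one has $\Phi'_N\subseteq\Phi'_3$ (because $A^N,B^N,C^N$ are powers of $A^3,B^3,C^3$), so $X'_N$ dominates $X'_3$. A Riemann--Hurwitz count for $X'_3\to X(2)$ — the three cusps of $X(2)$ have images of order $3$ in $H_3$, each cusp contributing ramification $9\cdot 2$ — gives $g(X'_3)=1$, and since $\Phi'_N$ contains the $N$-th powers of all cusp-stabilizer generators of $\bar\Gamma(2)$ (using $N$ odd, so that the commutator exponents $\binom N2$ are divisible by $N$), the covering $X'_3\to F_3$ is étale, hence an isogeny onto the Fermat cubic $F_3$, an elliptic curve of $j$-invariant $0$. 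Dually I would check $\Phi'_3\subseteq\bar\Gamma'(2)$ by computing that $A^3,B^3,C^3$ and the triple commutators of $\bar\Gamma(2)$ all land in the $2$-Sylow $V_4$ of $\mathrm{PSL}_2(\mathbb{Z}/3\mathbb{Z})$; combined with $[\bar\Gamma(2):\bar\Gamma'(2)]=3$ this exhibits $\bar\Gamma'(2)$ as the preimage of the one index-three subgroup of $H_3$ whose quotient of $X'_3$ still has genus $1$ (the ``translation'' subgroup of the Heisenberg action on the elliptic curve $X'_3$), the other index-three subgroups giving a rational quotient. For $3\nmid N$ there is no such factor: the point is that the only congruence-modular part of $J'_N$ is the copy of $J(F_3)$ occurring exactly when $3\mid N$ — equivalently, $X_{\overline{\Phi'_N}}$ has genus $0$ when $3\nmid N$ — and this rests on Rohrlich's determination of the cuspidal subgroup of $F_N$ (cuspidal divisors of order dividing $N$), together with the description of the homology of $X'_N$ established earlier.

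\emph{Assembling the dichotomy.} If $3\nmid N$, then $X'_N/\bar\Gamma_N$ is a quotient of the genus-$0$ curve $X_{\overline{\Phi'_N}}$, so $\theta_{N,\Gamma}=0$. If $3\mid N$ but $\Gamma\not\subseteq\bar\Gamma'(2)$, then, since $\Phi'_N\subseteq\bar\Gamma'(2)$, the image $\bar\Gamma_N$ is not contained in the distinguished index-three subgroup of $H_3$, so already the intermediate quotient of $X'_3$ is rational and $X'_N/\bar\Gamma_N$ has genus $0$. If $3\mid N$ and $\Gamma\subseteq\bar\Gamma'(2)$, then $\bar\Gamma_N$ lies in that distinguished subgroup, while the congruence hypothesis forces $\bar\Gamma_N$ to contain the ``congruence-invisible'' subgroup $\overline{\Phi'_N}/\Phi'_N$ of $H_N$; this is exactly large enough that $X'_N/\bar\Gamma_N$ retains only the genus-$1$ quotient, so its Jacobian — and hence the image of $\theta_{N,\Gamma}$ — is isogenous to $J(F_3)$, an elliptic curve with $j$-invariant $0$. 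Finally, for $N=3$ the whole of $J'_3$ is already isogenous to $J(F_3)$, a simple abelian variety, so every nonzero morphism out of $J'_3$ is an isogeny onto its image; thus for every congruence $\Gamma\subseteq\bar\Gamma'(2)$ the map $\theta_{3,\Gamma}$ is nonzero and has finite kernel.

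The main obstacle is the congruence-theoretic input of the third paragraph: establishing that $\overline{\Phi'_N}$ has genus $0$ when $3\nmid N$, and that when $3\mid N$ the image in $H_N$ of any congruence subgroup $\Gamma\subseteq\bar\Gamma'(2)$ is always large enough to collapse $X'_N/\bar\Gamma_N$ to (an isogeny factor of) $F_3$ rather than to a quotient of higher genus. Concretely this amounts to pinning down the congruence closure of $\Phi'_N$ in $\bar\Gamma(2)$, where Rohrlich's cuspidal computation and the explicit homology of $X'_N$ are indispensable; the remaining steps — Riemann--Hurwitz over the three cusps of $X(2)$ and the subgroup lattice of the Heisenberg group $H_N$ — are routine.
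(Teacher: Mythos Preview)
Your overall architecture is exactly the paper's: factor the correspondence through $X_{\Gamma\Phi'_N}$, so that the image of $\theta_{N,\Gamma}$ is (up to isogeny) the Jacobian of $X_{\Gamma\Phi'_N}$, and then pin down the group $\Gamma\Phi'_N$. Where you diverge, and where the gap lies, is in the tool you propose for that last step.

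You flag the ``main obstacle'' as determining the congruence closure $\overline{\Phi'_N}$ and, when $3\mid N$, showing that $\Gamma\Phi'_N$ collapses all the way up to a single group; you then say Rohrlich's cuspidal computation and the homology of $X'_N$ are ``indispensable'' here. They are not, and it is not clear how they would help: Rohrlich's theorem concerns torsion of cuspidal divisors on $F_N$, which says nothing about which congruence subgroups contain $\Phi'_N$, and the homology presentation of $X'_N$ is likewise orthogonal to the congruence question. The paper instead settles this by a short, purely group-theoretic profinite argument (its Proposition on the closure of $\bar\Gamma(2)_k$ and its corollary for $\Phi'_N$): for odd $n$ the lower central series of $\mathrm{PSL}_2(\mathbb{Z}/n\mathbb{Z})$ stabilises at the second step to the index-$3$ subgroup $D_n$ (pulled back from the $2$-Sylow of $\mathrm{PSL}_2(\mathbb{Z}/3\mathbb{Z})$), because $\mathrm{PSL}_2(\mathbb{Z}/p\mathbb{Z})$ is perfect for $p>3$. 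Hence every $\bar\Gamma(2)_k$ with $k\ge2$ has odd-adelic closure $\hat D_{\mathrm{odd}}$, and since $\Phi'_N$ sits between $\bar\Gamma(2)_3$ and the group generated by $\bar\Gamma(2)_2$, $A^N$, $B^N$, one reads off immediately that $\overline{\Phi'_N}=\bar\Gamma(2)$ if $3\nmid N$ and $\overline{\Phi'_N}=\bar\Gamma'(2)$ if $3\mid N$. It follows at once that $\Gamma\Phi'_N$ is either $\bar\Gamma(2)$ or $\bar\Gamma'(2)$, with the latter occurring precisely when $3\mid N$ and $\Gamma\subseteq\bar\Gamma'(2)$.

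Once you have this dichotomy, the analysis you carry out of index-$3$ subgroups of $H_3$ acting on $X'_3$, and of whether $\bar\Gamma_N$ is ``large enough'', becomes unnecessary: the intermediate curve is literally $X(2)$ (genus $0$) or $X'(2)$ (genus $1$, $j$-invariant $0$ by Riemann--Hurwitz plus the order-$3$ automorphism fixing a cusp), and in the second case the pullback $J'(2)\to J_\Gamma$ has finite kernel because its source is a non-zero simple abelian variety. Your treatment of the $N=3$ finite-kernel clause is fine and matches the paper's.
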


The proof of theorem \ref{theoremcomparison} is a translation of group theoretic statement: any term the lower central series of $\bar\Ga(2)$ is essentially dense in adelic completions of $\bar\Ga(2)$ (see Proposition \ref{oddadelic}).
In addition to these results of algebraic nature, we give a combinatorial description of the homology of the Riemann surface $X'_{N}$, by a method similar to Manin's presentation, but following the variant introduced in \cite{MR1405312}. This might have an interest of its own. But it does not help us for establishing our other results.
It will be intriguing to develop the theory of modular symbols on the Heisenberg curves following Shokurov~ \cite{MR0563089}, \cite{MR582162},\cite{MR571104}. 

\section{Heisenberg Groups and the lower central series of $\bar\Ga(2)$}
\label{}

\subsection{The Heisenberg group}
The Heisenberg group is the algebraic group of $3\times 3$ unipotent upper triangular matrices. Set:
 $x=\begin{pmatrix}
 1 & 1 & 0\\
 0 & 1 & 0\\
 0 & 0 & 1\\
\end{pmatrix}$, $y=\begin{pmatrix}
 1 & 0 & 0\\
 0 & 1 & 1\\
 0 & 0 & 1\\
\end{pmatrix}$ and $z=\begin{pmatrix}
 1 & 0 & 1\\
 0 & 1 & 0\\
 0 & 0 & 1\\
\end{pmatrix}$.

Those elements satisfy the relations $xz=zx$, $yz=zy$ and $z=xyx^{-1}y^{-1}=[x,y]$.  Thus one obtains a presentation of the Heisenberg group over the integers. Note the formula, for $a$, $b$, $c\in{\bf Z}$

\[
x^{a}z^{b}y^{c}=\begin{pmatrix}
 1 & a & b\\
 0 & 1 & c\\
 0 & 0 & 1\\
\end{pmatrix}.
\]

From that perspective, the group law is given by
$$
x^{a}z^{b}y^{c}x^{a'}z^{b'}y^{c'}=x^{a+a'}z^{b+b'+a'c}y^{c+c'}.
$$

The Heisenberg group $H_{\bf Z}$ over ${\bf Z}$ can be identified with ${\bf Z}^{3}$ with the previous group law. The abelianization of $H_{\bf Z}$ is freely generated by the images of $x$ and $y$ and is thus isomorphic to ${\bf Z}^{2}$. Thus $H_{\bf Z}$ is a central extension of ${\bf Z}^{2}$ by ${\bf Z}$.

Let $M$ and $N$ be natural integers. Let $L$ be a common divisor of $M$ and $N$. Let $H_{M,N,L}$ be the quotient group of $H_{\bf Z}$  spanned by $x$ and $y$ with relations $xz=zx$, $yz=zy$, $z=xyx^{-1}y^{-1}$ and $x^{N}=y^{M}=z^{L}=1$. Such a group can be identified (as a set) with ${\bf Z}/M{\bf Z}\times {\bf Z}/L{\bf Z}\times {\bf Z}/N{\bf Z}$ via the inverse of the map $(a,b,c)\mapsto x^{a}z^{b}y^{c}$. 

Note the map $H_{M,N,L}\rightarrow {\bf Z}/M{\bf Z}\times {\bf Z}/N{\bf Z}$ is coming from the abelianization.

Let $M'$, $N'$ and $L'$ be integer $\ge1$ such that $M|M'$, $N|N'$, and $L|L'$. The canonical group homomorphism $H_{M',N',L'}\rightarrow H_{M,N,L}$ is surjective. Its kernel is generated by $\{x^{M},y^{N}\}$. Since $[x^{M},y^{N}]=z^{NM}$, this kernel is abelian if and only if $L'|NM$.

\begin{prop}
Let $T$ be the lower common multiple of $M$ and $N$. 
The group $H_{M,N,L}$ is of exponent $T$ if $T$ is odd or $T/L$ is even. It is of exponent $2T$ otherwise.
In particular $H_{N,N,N}$, for $N$ odd, and $H_{N,N,N/2}$, for $N$ even, are of exponent $N$.
\end{prop}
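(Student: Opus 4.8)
The plan is to reduce everything to the explicit power formula in the Heisenberg group and then to a short divisibility analysis. First I would establish, by induction on $n\ge 0$ using the group law recalled above, the identity
\[
(x^{a}z^{b}y^{c})^{n}=x^{na}\,z^{\,nb+\binom{n}{2}ac}\,y^{nc}
\]
valid for all $a,b,c\in{\bf Z}$; the inductive step is one line, since $\binom{n}{2}+n=\binom{n+1}{2}$ and the relevant correction term in the product of $x^{na}z^{nb+\binom n2 ac}y^{nc}$ with $x^{a}z^{b}y^{c}$ is $nac$.

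Next I would translate ``$g^{n}=1$ for all $g$'' into divisibility conditions. Under the set-theoretic identification of $H_{M,N,L}$, the elements $x$, $y$, $z$ have orders exactly $N$, $M$, $L$. Hence $g^{n}=1$ for every $g\in H_{M,N,L}$ if and only if $na\equiv 0\pmod N$, $nc\equiv 0\pmod M$ and $nb+\binom n2 ac\equiv 0\pmod L$ for all integers $a,b,c$. Taking $g=x$ and $g=y$ forces $N\mid n$ and $M\mid n$, hence $T\mid n$; since $L\mid N$ the term $nb$ is then automatically $\equiv 0\pmod L$, and taking $g=xy$ (that is $a=c=1$, $b=0$) the remaining requirement is precisely $L\mid\binom n2$. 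Conversely, $T\mid n$ and $L\mid\binom n2$ visibly suffice. So the exponent of $H_{M,N,L}$ is the smallest positive multiple $n=Tk$ of $T$ with $L\mid\binom{Tk}{2}=\tfrac{Tk(Tk-1)}{2}$.

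Then I would run the case analysis. If $T$ is odd, $\binom{T}{2}=T\cdot\tfrac{T-1}{2}$ is a multiple of $T$, hence of $L$ (recall $L\mid N\mid T$), so $k=1$ works and the exponent is $T$. If $T$ is even, then $\binom{Tk}{2}=\tfrac{Tk}{2}(Tk-1)$, and any common divisor of $L$ and $Tk-1$ divides $\gcd(T,Tk-1)=1$, so $\gcd(L,Tk-1)=1$; therefore $L\mid\binom{Tk}{2}$ if and only if $L\mid\tfrac{Tk}{2}$. For $k=1$ this says $L\mid T/2$, i.e. $T/L$ is even; for $k=2$ it reads $L\mid T$, which always holds. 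Thus, when $T$ is even, the exponent is $T$ if $T/L$ is even and $2T$ if $T/L$ is odd, which is exactly the assertion. The ``in particular'' clause follows at once: for $H_{N,N,N}$ with $N$ odd one has $T=N$ odd, and for $H_{N,N,N/2}$ with $N$ even one has $T=N$ even and $T/L=2$ even.

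The only step needing genuine care — and the nearest thing to an obstacle — is the coprimality observation $\gcd(L,T-1)=1$ (which holds because $L\mid T$): this is what renders the factor $Tk-1$ of $\binom{Tk}{2}$ irrelevant and isolates the condition $L\mid T/2$, and hence what shows that $k=1$ fails precisely when $T$ is even and $T/L$ is odd. Everything else is bookkeeping with the power formula and the orders of the generators.
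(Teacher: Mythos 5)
Your proof is correct and follows essentially the same route as the paper: both hinge on the power identity whose central factor carries the exponent $\binom{n}{2}=n(n-1)/2$ (the paper phrases it as $(\alpha\beta)^n=\alpha^n[\alpha,\beta]^{n(n-1)/2}\beta^n$, you as the coordinate formula for $(x^az^by^c)^n$), followed by a divisibility analysis of $L\mid\binom{Tk}{2}$. Your write-up is in fact more complete than the paper's, which leaves the reduction to the condition $L\mid\binom{n}{2}$ and the lower-bound argument in the $2T$ case largely implicit.
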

\begin{proof}
Let $\alpha$, $\beta\in H_{M,N,L}$ in the subgroups generated by $x$ and $y$ respectively. 
Let $e=T$ if $T$ is odd or $T/L$ is even. Let $e=2T$ otherwise.
Since $[\alpha,\beta]$ belongs to the center of $H_{M,N,L}$, it is of order dividing $L$. 
Moreover, one has 
$(\alpha\beta)^{n}=\alpha^{n}[\alpha,\beta]^{n(n-1)/2}\beta^{n}$, which vanishes if $n=e$. By this calculation, one has $(xy)^{n}=x^{n}z^{n(n-1)/2}y^{n}$, which shows that $xy$ is of order $T$.
\end{proof}

Since the group ${\bar\Ga}(2)=\Gamma(2)/\{\pm1\}$ is freely generated by $A$ and $B$, one gets a surjective group homomorphism ${\bar\Ga}(2)\rightarrow H_{\bf Z}$ which sends $A$ and $B$ to $x$ and $y$ respectively. Its kernel is $[{\bar\Ga}(2),[{\bar\Ga}(2),{\bar\Ga}(2)]]$.
Every (necessarily nilpotent) finite quotient group of ${\bar\Ga}(2)$ which factorizes through ${\bar\Ga}(2)/[{\bar\Ga}(2),[{\bar\Ga}(2),{\bar\Ga}(2)]]$ is isomorphic to one of the groups $H_{M,N,L}$. 

Denote by  $\Ga_{M, N, L}$ the kernel of the map : 
${\bar\Ga}(2)\rightarrow H_{\bf Z}\rightarrow  H_{M,N,L}$. 

\subsection{The lower central series}
Recall that the {\it lower central series} $(G_k)_{k\ge1}$ of a group $G$ is defined recursively by $G_1=G$ and $G_{k+1}=[G_k,G]$. The quotient $G_k/G_{k+1}$ is then an abelian group. When $G$ is a free group generated by the family $(t_i)_{i\in I}$, $G_k/G_{k+1}$ is a free abelian group generated (not freely) by the classes of the commutators of weight $k$ on the generators, {\it i.e.} by the $[t_{i_1},...,t_{i_k}]$, where the indices run through any sequence $\{1,2,...,k\}\rightarrow I$ \cite[Theorem 10.2.3]{MR0103215}. When, furthermore, $I$ is finite of cardinality $m$, the rank $r_k$ of $G_k/G_{k+1}$ is given by Witt's formula, involving the necklace polynomial,
$$
r_k(G)=\frac{1}{k}\sum_{d|k}\mu(d)m^{k/d},
$$
where $\mu$ is the M\"obius function.
In particular the lower central series $({\bar\Ga}(2)_k)_{k\ge1}$  of ${\bar\Ga}(2)$ satisfies $r_1({\bar\Ga}(2))=2$, $r_2({\bar\Ga}(2))=1$, $r_3({\bar\Ga}(2))=2$ and $r_4({\bar\Ga}(2))=3$.
The corresponding generators of ${\bar\Ga}(2)_k/{\bar\Ga}(2)_{k+1}$ for $k=1$, $2$, $3$ are the classes of $\{A,B\}$, $\{C\}$, and $\{[C,A],[C,B]\}$ respectively. Therefore one has a surjective group morphism $\phi_1$ : ${\bar\Ga}(2)\rightarrow \Z^2$ such that $\phi_1(A)=(1,0)$ and $\phi_1(B)=(0,1)$. Its kernel is ${\bar\Ga}(2)_2$. Moreover, one gets a group isomorphism:
${\bar\Ga}(2)_1/{\bar\Ga}(2)_{3}\simeq H_{\bf Z}$.

Next we have the surjective group morphism $\phi_2$ : ${\bar\Ga}(2)_2\rightarrow \Z$, such that $\phi_2(C)=1$. Its kernel is ${\bar\Ga}(2)_3$. We can now describe $\phi_3$ : ${\bar\Ga}(2)_3\rightarrow \Z^2$ such that $\phi_3([C,A])=(1,0)$ and $\phi_3([C,B])=(0,1)$. Something interesting happens at that stage.

For $k$ integer $\ge 2$, the extension $1\rightarrow G_{k}/G_{k+1}\rightarrow G_{k-1}/G_{k+1}\rightarrow G_{k-1}/G_k\rightarrow 1$ is central. Consequently, since $r_2({\bar\Ga}(2))=1$, the group ${\bar\Ga}(2)_2/{\bar\Ga}(2)_4$ is abelian, and free of rank $3$. Of course, the extension $1\rightarrow {\bar\Ga}(2)_{2}/{\bar\Ga}(2)_{4}\rightarrow {\bar\Ga}(2)_{1}/{\bar\Ga}(2)_{4}\rightarrow {\bar\Ga}(2)_{1}/{\bar\Ga}(2)_2\rightarrow 1$ is not central.

\begin{prop}
There exists a group isomorphism $\psi$ : ${\bar\Ga}(2)_2/{\bar\Ga}(2)_4\simeq \Z^3$ given by $C\mapsto (0,0,1)$, $[C,A]\mapsto (1,0,0)$, and $[C,B]\mapsto(0,1,0)$. One has, for $\gamma\in {\bar\Ga}(2)$ and $\delta\in\bar\Ga(2)_2$, the formula 
$$
\psi(\gamma\delta\gamma^{-1})=(-\phi_1(\gamma)\phi_2(\delta),0)+\psi(\delta).
$$
or equivalently
$$
\psi([\delta,\gamma])=(\phi_1(\gamma)\phi_2(\delta),0).
$$
In particular, one has, for $i$, $j\in\Z$, the formula $\psi(A^iB^jC^kB^{-j}A^{-i})=(-ki,-kj,k)$.
\end{prop}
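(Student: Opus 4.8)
The plan is to verify directly that the map $\psi$ on generators extends to a well-defined isomorphism, and then to establish the conjugation formula, from which the remaining assertions follow. Since ${\bar\Ga}(2)_2/{\bar\Ga}(2)_4$ is known to be free abelian of rank $3$ (it is the middle term of a central extension of the rank-one group ${\bar\Ga}(2)_2/{\bar\Ga}(2)_3$ by the rank-two group ${\bar\Ga}(2)_3/{\bar\Ga}(2)_4$, hence abelian, and its rank is $r_2+r_3=1+2=3$ by Witt's formula), and since $\{C,[C,A],[C,B]\}$ is a generating family for it, it suffices to show these three elements are independent, i.e. that they form a basis. For this I would use the surjections already in hand: $\phi_2\colon{\bar\Ga}(2)_2\to\Z$ sends $C\mapsto1$ and kills $[C,A],[C,B]$ (which lie in ${\bar\Ga}(2)_3$), while $\phi_3\colon{\bar\Ga}(2)_3\to\Z^2$ sends $[C,A]\mapsto(1,0)$, $[C,B]\mapsto(0,1)$. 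Thus the composite $({\phi_3},{\phi_2})$-type map realizes ${\bar\Ga}(2)_2/{\bar\Ga}(2)_4\to\Z^2\times\Z=\Z^3$ with the stated effect on generators; since it is surjective onto $\Z^3$ and source and target are both free of rank $3$, it is an isomorphism. Define $\psi$ to be this map.

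Next I would prove the conjugation formula $\psi(\gamma\delta\gamma^{-1})=(-\phi_1(\gamma)\phi_2(\delta),0)+\psi(\delta)$ for $\gamma\in{\bar\Ga}(2)$, $\delta\in{\bar\Ga}(2)_2$. Both sides are, for fixed $\gamma$, homomorphisms in $\delta$ (the left side because conjugation is an automorphism preserving ${\bar\Ga}(2)_2$ and descending to the abelian quotient ${\bar\Ga}(2)_2/{\bar\Ga}(2)_4$; the right side visibly, using that $\phi_2$ is a homomorphism and the last two coordinates of $\psi$ factor through $\phi_3$ which is a homomorphism on ${\bar\Ga}(2)_3$ — one checks the cross terms vanish since ${\bar\Ga}(2)_3$ is central mod ${\bar\Ga}(2)_4$). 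Similarly, both sides are homomorphisms in $\gamma$ when one restricts attention to how they vary: more precisely, $(\gamma,\gamma')\mapsto$ the difference $\psi(\gamma\gamma'\delta\gamma'^{-1}\gamma^{-1})-\psi(\delta)$ is additive, because the commutator identity $[\delta,\gamma\gamma']\equiv[\delta,\gamma]+[\delta,\gamma']$ holds in the abelian quotient ${\bar\Ga}(2)_2/{\bar\Ga}(2)_4$ (again since ${\bar\Ga}(2)_3$ is central modulo ${\bar\Ga}(2)_4$, so $[\delta,\gamma]$ is central there and the standard collection identity has no correction term). Hence it is enough to check the formula when $\gamma\in\{A,B\}$ and $\delta=C$. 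This reduces to the two identities $\psi(ACA^{-1})=(-1,0,1)$ and $\psi(BCB^{-1})=(0,-1,1)$, equivalently $\psi([C,A])=(1,0,0)$ and $\psi([C,B])=(0,1,0)$, which are true by the definition of $\psi$. The equivalent form $\psi([\delta,\gamma])=(\phi_1(\gamma)\phi_2(\delta),0)$ is immediate from $[\delta,\gamma]=\delta(\gamma\delta^{-1}\gamma^{-1})$ and applying $\psi$.

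Finally, the explicit value $\psi(A^iB^jC^kB^{-j}A^{-i})=(-ki,-kj,k)$ follows by taking $\delta=C^k$, so $\psi(\delta)=(0,0,k)$ and $\phi_2(\delta)=k$, and $\gamma=A^iB^j$, so $\phi_1(\gamma)=(i,j)$; the conjugation formula then gives $(-ki,-kj,0)+(0,0,k)$, as claimed.

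The only genuinely delicate point is the bilinearity argument for the conjugation formula: one must be careful that the ``cross terms'' — both the term one would expect from $[\delta\delta',\gamma]$ versus $[\delta,\gamma]+[\delta',\gamma]$, and from $[\delta,\gamma\gamma']$ versus $[\delta,\gamma]+[\delta,\gamma']$ — indeed vanish in ${\bar\Ga}(2)_2/{\bar\Ga}(2)_4$. This is where one uses crucially that $[{\bar\Ga}(2)_2,{\bar\Ga}(2)_2]\subseteq{\bar\Ga}(2)_4$ and that $[{\bar\Ga}(2)_3,{\bar\Ga}(2)]\subseteq{\bar\Ga}(2)_4$, i.e. that ${\bar\Ga}(2)_3/{\bar\Ga}(2)_4$ is central in ${\bar\Ga}(2)/{\bar\Ga}(2)_4$; everything else is bookkeeping with the commutator identities $[ab,c]=[a,c]^b[b,c]$ and $[a,bc]=[a,c][a,b]^c$.
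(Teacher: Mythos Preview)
Your argument is correct and follows the same strategy as the paper: establish the isomorphism from the known ranks $r_2=1$, $r_3=2$, then prove the conjugation formula by showing $\psi([\delta,\gamma])$ is additive in each variable separately (the paper via an explicit double-conjugation identity, you via the standard commutator identities) and checking on the generators $\gamma\in\{A,B\}$, $\delta=C$. One point deserves an extra sentence: the reduction to $\delta=C$ alone does not follow merely from additivity in $\delta$, since $C$ does not generate $\bar\Gamma(2)_2/\bar\Gamma(2)_4$; you need the further remark that $\psi([\delta,\gamma])$ vanishes whenever $\delta\in\bar\Gamma(2)_3$ (because then $[\delta,\gamma]\in\bar\Gamma(2)_4$), so that the bilinear map factors through $\bar\Gamma(2)_2/\bar\Gamma(2)_3=\langle C\rangle$ --- the paper makes exactly this reduction by writing $\delta=(\delta C^{-\phi_2(\delta)})\cdot C^{\phi_2(\delta)}$ with the first factor in $\bar\Gamma(2)_3$.
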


\begin{proof}
Given that $\{[C,A],[C,B]\}$ is basis of the $\Z$-module ${\bar\Ga}(2)_3/{\bar\Ga}(2)_4$, and $C$ is a basis of the $\Z$ module ${\bar\Ga}(2)_2/{\bar\Ga}(2)_3$, any lifting of $C$ modulo ${\bar\Ga}(2)_3$, to a class $C'$ modulo ${\bar\Ga}(2)_4$ gives a basis $\{[C,A],[C,B], C'\}$ of ${\bar\Ga}(2)_2/{\bar\Ga}(2)_4$. The choice $C=C'$ is evidently suitable, but is somewhat arbitrary.

One has 
$$
\psi(\gamma\delta\gamma^{-1})=\psi(\gamma\delta C^{-\phi_2(\delta)}\gamma^{-1}C^{\phi_2(\delta)}\delta^{-1}\delta C^{-\phi_2(\delta)}\gamma C^{\phi_2(\delta)}\gamma^{-1}).
$$
Since $\delta C^{-\phi_2(\delta)}\in\bar\Ga(2)_3$, the factor $\gamma\delta C^{-\phi_2(\delta)}\gamma^{-1}C^{\phi_2(\delta)}\delta^{-1}$ belongs to $\bar\Ga(2)_4$; hence its image by $\psi$ vanishes. So we get
\begin{equation}
\label{psi}
\psi(\gamma\delta\gamma^{-1})=\psi(\delta)-\phi_2(\delta)\psi(C)+\phi_2(\delta)\psi(\gamma C\gamma^{-1}),
\end{equation}
which translates into 
$$
\psi([\delta,\gamma])=\phi_2(\delta)\psi([C,\gamma]).
$$
It remains to determine $\psi([C,\gamma])$. Let $\gamma_1$, $\gamma_2\in {\bar\Ga}(2)$. One has
\begin{equation}
\label{doubleconjugacy}
(\gamma_1\gamma_2)\delta(\gamma_1\gamma_2)^{-1}=[\gamma_1,\gamma_2][\gamma_2,[\gamma_1,\delta]][\gamma_1,\delta][\gamma_2,\delta]\delta[\gamma_2,\gamma_1].
\end{equation}
Since $[\gamma_2,[\gamma_1,\delta]]$ is a commutator of degree $4$, it is in the kernel of $\psi$. It follows that the map $\gamma\mapsto \psi(\gamma\delta\gamma^{-1})$ is a group homomorphism from $\bar\Ga(2)$ to $\Z^2$. For $\delta=C$, $\gamma=A$ (resp. $\gamma=B$), one has $\psi(\gamma\delta\gamma^{-1})=-(\psi_1(\gamma),0)$, hence the latter equality is true for all $\gamma\in\bar\Ga(2)$. Thus one gets
$$
\psi([C,\gamma])=(\phi_1(\gamma),0),
$$
which gives the main formula. It remains to apply this to $\gamma=A^iB^j$ and $\delta=C^k$ to obtain the final formula.

\end{proof}

The exact sequence  $1\rightarrow {\bar\Ga}(2)_{3}/{\bar\Ga}(2)_{4}\rightarrow {\bar\Ga}(2)_{1}/{\bar\Ga}(2)_{4}\rightarrow {\bar\Ga}(2)_{1}/{\bar\Ga}(2)_3\rightarrow 1$ identifies to a central group extension
$$
0\rightarrow \Z^2\rightarrow H'_\Z\rightarrow H_\Z\rightarrow 1.
$$

\subsection{The groups $\Phi_N$, $\Phi'_N$ and $\Phi''_N$}
We denote the group $\Ga_{N,N,1}$ by $\Phi_N$. It is the kernel of the group homomorphism $\bar\phi_1$ : ${\bar\Ga}(2)\rightarrow (\Z/N\Z)^2$, which maps $A$ to $(1,0)$ and $B$ to $(0,1)$ (and thus $C$ to $(0,0)$). 
A system of representatives for the cosets $\Phi_N \backslash {\bar\Ga}(2)$ is given by $A^i B^j$ with $i,j \in \{0,1...(N-1)\}$.
The structure of $\Phi_N$ is probably well known, but we could not find a complete reference for a presentation $\Phi_N$.

\begin{prop}
The group $\Phi_N$ is generated by $U=\{A^N,B^N, A^iB^jCB^{-j}A^{-i}/0\le i,j\le N-1\}$. Moreover $\Phi_N$ has presentation $<A^N, B^N, T_{i,j}, 0\le i,j\le N-1 |[A^N,B^N]=\prod_{i=0}^{N-1}\prod_{j=0}^{N-1}T_{N-1-i,j}>$.
\end{prop}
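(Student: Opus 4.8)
The plan is to treat $\Phi_N$ as a finite–index subgroup of the free group $\bar\Ga(2)=F(A,B)$. Its index is $|(\Z/N\Z)^2|=N^2$, so by Nielsen--Schreier $\Phi_N$ is free of rank $1+N^2(2-1)=N^2+1$. I will then argue in three steps: (a) the set $U$ generates $\Phi_N$; (b) the relation $R\colon [A^N,B^N]=\prod_{i=0}^{N-1}\prod_{j=0}^{N-1}T_{N-1-i,j}$ holds in $\bar\Ga(2)$, where $T_{i,j}=A^iB^jCB^{-j}A^{-i}$; and (c) the abstract one–relator group $G=\langle A^N,B^N,T_{i,j}\mid R\rangle$ is free of rank $N^2+1$, so that the tautological surjection $G\twoheadrightarrow\Phi_N$ supplied by (a) and (b) is an isomorphism. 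Throughout I abbreviate ${}^g h=ghg^{-1}$ and use the two elementary commutator identities $[xy,z]={}^x[y,z]\,[x,z]$ and $[x,yz]=[x,y]\,{}^y[x,z]$, together with $[B,A]=[A,B]^{-1}=C^{-1}$.

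Step (b) is a direct iteration of these identities. Expanding the first variable gives $[A^N,B^N]=\prod_{i=0}^{N-1}{}^{A^{N-1-i}}[A,B^N]$, and expanding the second gives $[A,B^N]=\prod_{j=0}^{N-1}{}^{B^j}C=\prod_{j=0}^{N-1}T_{0,j}$; combining, $[A^N,B^N]=\prod_{i=0}^{N-1}\prod_{j=0}^{N-1}{}^{A^{N-1-i}B^j}C=\prod_{i=0}^{N-1}\prod_{j=0}^{N-1}T_{N-1-i,j}$, which is $R$. In particular $R$ holds in $\bar\Ga(2)$, hence in $\Phi_N$ (both sides lie in $\Phi_N$; the left side even in $\langle A^N,B^N\rangle$).

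For step (a), recall that since $\bar\Ga(2)$ is free on $A,B$ its commutator subgroup $\bar\Ga(2)_2$ is the normal closure $\langle\langle C\rangle\rangle$ of $C=[A,B]$; and since $\bar\Ga(2)/\bar\Ga(2)_2\cong\Z^2$ surjects onto $(\Z/N\Z)^2$ with kernel $N\Z\times N\Z$, we get $\Phi_N=\langle A^N,B^N\rangle\cdot\bar\Ga(2)_2$. As $C=T_{0,0}$, $A^N$ and $B^N$ all belong to $U$, it suffices to prove that $\langle U\rangle$ is normal in $\bar\Ga(2)$: normality forces $\langle U\rangle\supseteq\langle\langle C\rangle\rangle=\bar\Ga(2)_2$ and $\langle U\rangle\supseteq\langle A^N,B^N\rangle$, hence $\langle U\rangle=\Phi_N$ (the inclusion $\langle U\rangle\subseteq\Phi_N$ is clear). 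Normality reduces to checking that $A^{\pm1}$ and $B^{\pm1}$ conjugate each generator of $U$ back into $\langle U\rangle$. Most cases are immediate, e.g. $A\,T_{i,j}A^{-1}=T_{i+1,j}$ for $i\le N-2$; the cases where an index leaves the range $[0,N-1]$ are handled with the identities of (b) — for instance $A\,T_{N-1,j}A^{-1}=A^N\,({}^{B^j}C)\,A^{-N}={}^{A^N}T_{0,j}$, while $A\,B^N\,A^{-1}=[A,B^N]\,B^N=\bigl(\prod_{j}T_{0,j}\bigr)B^N$ and $B\,A^N\,B^{-1}=[B,A^N]\,A^N=\bigl(\prod_{i}T_{i,0}^{-1}\bigr)A^N$, all indices staying in range. (Alternatively one may run Reidemeister--Schreier with transversal $\{A^iB^j\}_{0\le i,j\le N-1}$ and check that each free generator of $\Phi_N$ is a word in $U$.)

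Step (c) is then formal. In the relator $[A^N,B^N]\,\prod_{i,j}T_{N-1-i,j}^{-1}$ the generator $T_{0,N-1}$ — the last factor of the double product — occurs exactly once and to the first power; a Tietze transformation eliminating it presents $G$ as the free group on the remaining $N^2+1$ generators $A^N,B^N,\{T_{i,j}\}_{(i,j)\neq(0,N-1)}$. By (a) and (b) there is a surjection $G\twoheadrightarrow\Phi_N$ between free groups of the same finite rank $N^2+1$; since finitely generated free groups are Hopfian, this surjection is an isomorphism. The one genuinely delicate point is the commutator bookkeeping in (b): it must be carried out in exactly the form that also clears the boundary cases of the normality argument in (a). Everything else is soft.
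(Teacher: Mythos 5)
Your proof is correct and follows essentially the same route as the paper: Nielsen--Schreier gives that $\Phi_N$ is free of rank $N^2+1$, iterated commutator identities produce the single exhibited relation, and eliminating one $T_{i,j}$ (each occurs exactly once in the relator) identifies the one-relator group with a free group of rank $N^2+1$, so the surjection onto $\Phi_N$ is an isomorphism. The only differences are that the paper cites Rohrlich for the generation claim and leaves the final Hopfian/rank step implicit where you make both explicit; your one loose end is the $B$-conjugation of $T_{i,j}$ for $i\ge1$ in the normality check (since $B$ does not commute with $A^i$, one gets $BT_{i,j}B^{-1}=\bigl(\prod_{k=0}^{i-1}T_{k,0}^{-1}\bigr)T_{i,j+1}\bigl(\prod_{k=0}^{i-1}T_{k,0}^{-1}\bigr)^{-1}$ rather than $T_{i,j+1}$), which your hedged phrasing and the Reidemeister--Schreier alternative already cover.
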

\begin{proof}

The group $\Phi_N$ is generated by $U=\{A^N,B^N, A^iB^jCB^{-j}A^{-i}/0\le i,j\le N-1\}$ ~\cite{MR681120}, ~\cite{MR0441978}. By the Nielsen-Schreier theorem, $\Phi_N$, being a free subgroup of index $N$ of a free group on two generators, is free on 
$N^2+1$ generators.
A relation between the $N^2+2$ exhibited generators in $U$ presents itself : 
\begin{equation}
\label{relation}
A^NB^NA^{-N}B^{-N}=\prod_{i=0}^{N-1}\prod_{j=0}^{N-1}A^{N-1-i}B^jCB^{-j}A^{i+1-N}.
\end{equation}
One of the exhibited generators in $U$ can be expressed in terms of the $N^2+1$ remaining elements of $U$. Thus we get a presentation by generators and relations of $\Phi_N$.
\end{proof}

Set $N'=N$ if $N$ is odd, and $N'=N/2$ if $N$ is even.

We set $\Phi'_N=\Ga_{N,N,N'}$. It is the subgroup of $\Phi_N$ obtained as the kernel of the morphism $\bar\phi_2$ : $\Phi_N\rightarrow \Z/N'\Z$ which vanishes on $A^N$ and $B^N$ and takes the value $1$ on $A^iB^jCB^{-j}A^{-i}$ for $i$, $j$ integers.

Alternately, $\Phi'_N$ is the kernel of the composed map ${\bar\Ga}(2)\rightarrow H_{\bf Z}\rightarrow  H_{N,N,N'}$. Thus the composed map $[{\bar\Ga}(2),{\bar\Ga}(2)]\rightarrow \Z\rightarrow \Z/N'\Z$  extends to a map
$\bar\phi_2$: $\Phi_N\rightarrow \Z/N'\Z$, with kernel $\Phi'_N$. It vanishes on $A^N$ and $B^N$.

\begin{prop}
The composed map $[{\bar\Ga}(2),{\bar\Ga}(2)]\rightarrow \Z^3\rightarrow (\Z/N'\Z)^3$ extends to a group homomorphism
$$
\bar\psi : \Phi_N\rightarrow (\Z/N'\Z)^3
$$
which vanishes on $A^N$ and $B^N$ and, for $\gamma\in\bar\Ga(2)$ and $\delta\in\Phi_N$, satisfies 
\begin{equation}
\label{barpsi}
\bar\psi(\gamma\delta\gamma^{-1})=(-\bar\phi_1(\gamma)\bar\phi_2(\delta),0)+\bar\psi(\delta).
\end{equation}
In particular, for $i$, $j$, $k\in\Z$, one has 
\begin{equation}
\label{psigen}
\bar\psi(A^iB^jC^kB^{-j}A^{-i})=(-ik,-jk,k).
\end{equation}
\end{prop}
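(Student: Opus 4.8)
The plan is to realise $\bar\psi$ as a homomorphism out of a nilpotent quotient of $\Phi_N$, which avoids working directly with the presentation of $\Phi_N$. Since $(\Z/N\Z)^2$ is abelian, $[{\bar\Ga}(2),{\bar\Ga}(2)]={\bar\Ga}(2)_2$ is contained in $\Phi_N$, hence so is ${\bar\Ga}(2)_4$, which is moreover characteristic in ${\bar\Ga}(2)$; so $G:=\Phi_N/{\bar\Ga}(2)_4$ makes sense and sits in an extension $1\to L\to G\to \Z^2\to 1$, where $L={\bar\Ga}(2)_2/{\bar\Ga}(2)_4\cong\Z^3$ through the isomorphism $\psi$ of the previous proposition, and $G/L=\Phi_N/{\bar\Ga}(2)_2\cong(N\Z)^2$ is free of rank $2$ on the classes $\bar A,\bar B$ of $A^N,B^N$. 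The conjugation action of $G/L$ on $L$ is read off from $\psi(\gamma\delta\gamma^{-1})=(-\phi_1(\gamma)\phi_2(\delta),0)+\psi(\delta)$: since the third coordinate of $\psi(\delta)$ is $\phi_2(\delta)$, the element $\bar A$ acts by $(a,b,c)\mapsto(a-Nc,b,c)$ and $\bar B$ by $(a,b,c)\mapsto(a,b-Nc,c)$. The homomorphism $\bar\psi$ will be the composite of $\Phi_N\twoheadrightarrow G$ with a homomorphism $\tilde\pi\colon G\to(\Z/N'\Z)^3$ extending the reduction $\pi\colon L=\Z^3\to(\Z/N'\Z)^3$.

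To produce $\tilde\pi$, write each $g\in G$ uniquely as $\bar A^a\bar B^b\ell$ with $\ell\in L$ and set $\tilde\pi(g)=\pi(\ell)$; this automatically sends $\bar A$ and $\bar B$ to $0$, and a direct computation (using that $G/L$ is free abelian, so that the commutators entering the multiplicativity of $g\mapsto\ell$ are conjugates of $[\bar A,\bar B]^{\pm1}$) shows $\tilde\pi$ is a homomorphism as soon as: (i) $\pi$ is invariant under the above conjugation action, equivalently $Nc\equiv 0\pmod{N'}$ for all $c$, i.e. $N'\mid N$, which holds as $N'\in\{N,N/2\}$; and (ii) $\pi$ kills the class in $L$ of $[\bar A,\bar B]=[A^N,B^N]\bmod{\bar\Ga}(2)_4$. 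For (ii), the relation $A^NB^NA^{-N}B^{-N}=\prod_{i,j}A^{N-1-i}B^jCB^{-j}A^{i+1-N}$ together with $\psi(A^mB^jCB^{-j}A^{-m})=(-m,-j,1)$ yields $\psi([A^N,B^N])=\big(-N\binom{N}{2},\,-N\binom{N}{2},\,N^2\big)$, whose coordinates are all divisible by $N'$ because $N'\mid\binom{N}{2}=\tfrac{N(N-1)}{2}$ and $N'\mid N$. It is precisely here that the definition of $N'$ enters: $\tfrac{N(N-1)}{2}$ equals $N\cdot\tfrac{N-1}{2}$ when $N$ is odd and $\tfrac{N}{2}\cdot(N-1)$ when $N$ is even. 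This constructs $\bar\psi\colon\Phi_N\to(\Z/N'\Z)^3$; by construction $\bar\psi(A^N)=\bar\psi(B^N)=0$ and $\bar\psi$ restricts on ${\bar\Ga}(2)_2$ to $\psi$ reduced mod $N'$, so it extends the composed map in the statement.

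It then remains to establish \eqref{barpsi}, from which \eqref{psigen} is immediate: $A^iB^jC^kB^{-j}A^{-i}=(A^iB^jCB^{-j}A^{-i})^k$ lies in ${\bar\Ga}(2)_2$, so $\bar\psi$ sends it to $k\,\psi(A^iB^jCB^{-j}A^{-i})\bmod N'=(-ik,-jk,k)\bmod N'$. To prove \eqref{barpsi}, fix $\gamma\in{\bar\Ga}(2)$. Both sides are homomorphisms of $\Phi_N$ in the variable $\delta$ — the left side is $\bar\psi$ precomposed with conjugation by $\gamma$, the right side is the sum of $\bar\psi$ and of $\delta\mapsto(-\bar\phi_1(\gamma)\bar\phi_2(\delta),0)$ — and $\Phi_N$ is generated by ${\bar\Ga}(2)_2$ together with $A^N,B^N$; so it suffices to verify \eqref{barpsi} on these generators. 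For $\delta\in{\bar\Ga}(2)_2$ it is the previous proposition's identity reduced mod $N'$, using that $\bar\phi_2$ restricts to $\phi_2\bmod N'$ on ${\bar\Ga}(2)_2$. For $\delta=A^N$, one has $\gamma A^N\gamma^{-1}=([\gamma,A]A)^N=\big(\prod_{k=0}^{N-1}A^k[\gamma,A]A^{-k}\big)A^N$ with $[\gamma,A]\in{\bar\Ga}(2)_2$; applying the previous proposition term by term, $\bar\psi(\gamma A^N\gamma^{-1})=\sum_{k=0}^{N-1}\bar\psi(A^k[\gamma,A]A^{-k})$ reduces to $\big(-\phi_2([\gamma,A])\binom{N}{2},0,0\big)+N\,\psi([\gamma,A])\bmod N'$, which is $0$ since $N'\mid\binom{N}{2}$ and $N'\mid N$; this matches the right-hand side because $\bar\phi_2(A^N)=0$. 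The case $\delta=B^N$ is symmetric. The one real obstacle is therefore arithmetic, not group-theoretic: the divisibility $N'\mid\tfrac{N(N-1)}{2}$ must be used at each of these steps, and indeed the statement fails with $N$ in place of $N'$ when $N$ is even, since then $\bar\psi(BA^NB^{-1})=\bar\psi([B,A^N])$ would equal $\big(\tfrac{N}{2},0,0\big)\neq 0$.
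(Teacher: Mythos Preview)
Your proof is correct, and the overall strategy differs from the paper's in two places worth noting.

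For the \emph{construction} of $\bar\psi$, the paper works directly with the presentation of $\Phi_N$: it defines $\bar\psi$ on the generators $A^N$, $B^N$, $A^iB^jCB^{-j}A^{-i}$ via \eqref{psigen} and checks that the single defining relation \eqref{relation} is sent to zero. You instead pass to the nilpotent quotient $G=\Phi_N/\bar\Ga(2)_4$ and recognise that the problem is to split the induced central extension of $\Z^2$ by $(\Z/N'\Z)^3$; your conditions (i) and (ii) are exactly the vanishing of the action and of the $2$-cocycle, and you verify (ii) by the same relation \eqref{relation}. The two routes meet at the arithmetic input $N'\mid\binom{N}{2}$, but your phrasing explains structurally \emph{why} the obstruction lives in a single commutator.

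For the \emph{conjugation formula} \eqref{barpsi}, the paper fixes $\delta$ (say $B^N$) and uses the identity \eqref{doubleconjugacy} to show that $\gamma\mapsto\bar\psi(\gamma B^N\gamma^{-1})$ is a homomorphism in $\gamma$, then checks $\gamma=A,B$ via the explicit factorisation $AB^NA^{-1}B^{-N}=\prod_{i=0}^{N-1}B^iCB^{-i}$, obtaining $(0,\binom{N}{2},0)$. You instead fix $\gamma$, observe that both sides of \eqref{barpsi} are homomorphisms in $\delta$, and check on the generating set $\bar\Ga(2)_2\cup\{A^N,B^N\}$ using $\gamma A^N\gamma^{-1}=\bigl(\prod_{k=0}^{N-1}A^k[\gamma,A]A^{-k}\bigr)A^N$, obtaining $\bigl(-\binom{N}{2}\phi_2([\gamma,A]),0,0\bigr)+N\psi([\gamma,A])$. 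This reduction is cleaner: it avoids the auxiliary identity \eqref{doubleconjugacy} and makes it transparent that the only nontrivial input is again $N'\mid\binom{N}{2}$ and $N'\mid N$. Your closing remark that the formula fails modulo $N$ for $N$ even is a nice sharpness check that the paper does not make.
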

\begin{proof}
It follows from the presentation of $\Phi_N$ that formula \ref{psigen} defines a morphism $\Phi_N\rightarrow (\Z/N\Z)^3$ (it vanishes on the exhibited relation \ref{relation} between the generators). Such a morphism coincides with the composed map $[{\bar\Ga}(2),{\bar\Ga}(2)]\rightarrow \Z^3\rightarrow (\Z/N\Z)^3$, by the formula we established on $\psi$. 

The formula \ref{barpsi} is valid whenever $\delta\in\bar\Ga(2)_2$, by \ref{psi}. Since $\Phi_N$ is generated by $\bar\Ga(2)_2$ together with $A^N$ and $B^N$, it remains to prove \ref{barpsi} for $\delta=A^N$ and $\delta=B^N$. Consider the case $\delta=B^N$ for instance (the other case is similar).
Let us use the formula \ref{doubleconjugacy} again.
We get, for $\gamma_1$, $\gamma_2\in\bar\Ga(2)$, (leaving out opposite terms)
$$
\bar\psi(\gamma_1\gamma_2 B^N\gamma_2^{-1}\gamma_1^{-1})=\bar\psi([\gamma_2,[\gamma_1,B^N]])+\bar\psi([\gamma_1,B^N])+\bar\psi([\gamma_2,B^N])+\bar\psi(B^N).
$$
We have $\bar\psi(B^N)=0$, and $\bar\psi([\gamma_2,[\gamma_1,B^N]])$ is the reduction modulo $N$ of $\psi([\gamma_2,[\gamma_1,B^N]])$. One has $\psi([\gamma_2,[\gamma_1,B^N]])=-(\phi_1(\gamma_2)\phi_2([\gamma_1,B^N],0)$. 
But $\phi_2([\gamma_1,B^N])\in N\Z$. So we have proved that the map $\gamma\mapsto \bar\psi(\gamma B^N\gamma^{-1})$ is a group homomorphism. 
It remains to prove that the formula \ref{barpsi} holds for $\gamma=A$ and $\gamma=B$ (still in the configuration where $\delta=B^N$). Only the case $\gamma=A$ is of interest. We have
$$
\bar\psi(AB^NA^{-1})=\bar\psi(AB^NA^{-1}B^{-N}).
$$
As $AB^NA^{-1}B^{-N}\in\bar\Ga(2)_2$, $\bar\psi(AB^NA^{-1}B^{-N})$ is the reduction modulo $N$ of $\psi(AB^NA^{-1}B^{-N})$. We use the identity
$$
AB^NA^{-1}B^{-N}=(ABA^{-1})^NB^{-N}=(ABA^{-1}B^{-1}B)^NB^{-N}=CBCB^{-1}B^2CB^{-2}...B^{N-1}CB^{1-N}.
$$
The right-hand side is a product of generators of $\Phi_N$. We can apply \ref{psigen}
$$
\psi(AB^NA^{-1}B^{-N})=\sum_{i=0}^{N-1}\psi(B^iCB^{-i})=\sum_{i=0}^{N-1}(0,-i,0)=(0,N(N-1)/2,0).
$$
Since $N'={\rm gcd}(N,N(N-1)/2)$, we have indeed that $\bar\psi(AB^NA^{-1}B^{-N})=0$. 
\end{proof}

\begin{remark}
Let $N''=N'$ if $N$ is prime to $3$, and $N''=N'/3$ otherwise.
The appearance of the denominator $2$ (for $N'$) and now $3$ (for $N''$) is related to Bernoulli numbers. We suspect that ultimately it is related to the mixed Tate motives that have been discovered by Deligne in his study of the nilpotent completion of the fundamental group of the projective line deprived of three points \cite{MR1012168}.
\end{remark}

We define $\Phi''_N$ as the kernel of the composed maps $\Phi_{N}\rightarrow (\Z/N''\Z)^{2}\times(\Z/N'\Z)$. 

\begin{cor}
The exact sequence 
$$
0\rightarrow \Phi_N'/\Phi_N''\rightarrow {\bar\Ga}(2)/\Phi_N''\rightarrow {\bar\Ga}(2)/\Phi_N'\rightarrow 0
$$
makes of the group ${\bar\Ga}(2)/\Phi_N''$ a central extension of the Heisenberg group $H_{N,N,N'}$ by $(\Z/N''\Z)^2$.
\end{cor}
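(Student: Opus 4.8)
The plan is to read the whole statement off the homomorphism $\bar\psi:\Phi_N\to(\Z/N'\Z)^3$ of the preceding proposition and its transformation formula \ref{barpsi}. First I would make the map cutting out $\Phi''_N$ explicit: it is the composite of $\bar\psi$ with the projection $(\Z/N'\Z)^3\to(\Z/N''\Z)^2\times(\Z/N'\Z)$ that reduces the first two coordinates modulo $N''$ (legitimate, since $N''\mid N'$) and keeps the third; by \ref{psigen} that third coordinate is precisely $\bar\phi_2$. Consequently $\Phi''_N\subseteq\ker\bar\phi_2=\Phi'_N$, so the inclusions $\Phi''_N\subseteq\Phi'_N\subseteq\bar\Ga(2)$ are nested as the statement demands.

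Next I would establish simultaneously the normality of $\Phi''_N$ in $\bar\Ga(2)$ and the centrality in the extension, since both rest on the same one-line computation. Because $\Phi_N/\Phi'_N$ is generated by the class of $C$, which maps to $z\in Z(H_{N,N,N'})$, conjugation by $\bar\Ga(2)$ acts trivially on $\Phi_N/\Phi'_N$; hence $\bar\phi_2(\gamma\delta\gamma^{-1})=\bar\phi_2(\delta)$ for all $\gamma\in\bar\Ga(2)$ and $\delta\in\Phi_N$. If moreover $\delta\in\Phi'_N$, then $\bar\phi_2(\delta)=0$, so $\gamma\delta\gamma^{-1}\in\Phi'_N$ and formula \ref{barpsi} collapses to $\bar\psi(\gamma\delta\gamma^{-1})=\bar\psi(\delta)$. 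Applied to $\delta\in\Phi''_N$ this gives $\gamma\delta\gamma^{-1}\in\Phi''_N$, so $\Phi''_N\triangleleft\bar\Ga(2)$ and the sequence of the statement makes sense; applied to an arbitrary $\delta\in\Phi'_N$ it gives $[\gamma,\delta]\in\Phi''_N$, i.e. the image of $\Phi'_N$ in $\bar\Ga(2)/\Phi''_N$ is central.

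It remains to identify the kernel $\Phi'_N/\Phi''_N$ and assemble the pieces. Restricting the defining map of $\Phi''_N$ to $\Phi'_N$, its third coordinate vanishes identically, so the image lies in $(\Z/N''\Z)^2\times\{0\}$; it surjects onto $(\Z/N''\Z)^2$ because $[C,A],[C,B]\in\bar\Ga(2)_3\subseteq\Phi'_N$ and the formula $\psi([C,\gamma])=(\phi_1(\gamma),0)$ yields $\bar\psi([C,A])=(1,0,0)$ and $\bar\psi([C,B])=(0,1,0)$. Its kernel being $\Phi''_N$, we get $\Phi'_N/\Phi''_N\cong(\Z/N''\Z)^2$. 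Combining this with $\bar\Ga(2)/\Phi'_N=H_{N,N,N'}$ (the very definition of $\Phi'_N=\Ga_{N,N,N'}$), the exactness of $0\to\Phi'_N/\Phi''_N\to\bar\Ga(2)/\Phi''_N\to\bar\Ga(2)/\Phi'_N\to0$ (third isomorphism theorem, legitimate by the normality just proved), and the centrality of the previous paragraph, exhibits $\bar\Ga(2)/\Phi''_N$ as a central extension of $H_{N,N,N'}$ by $(\Z/N''\Z)^2$.

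The genuinely delicate point, and the one I would be most careful about, is the correct use of formula \ref{barpsi}: one must be sure that $\bar\phi_2$ is invariant under $\bar\Ga(2)$-conjugation of $\Phi_N$ — so that the error term $(-\bar\phi_1(\gamma)\bar\phi_2(\delta),0)$ genuinely vanishes on all of $\Phi'_N$ — and that the composite defining $\Phi''_N$ is a bona fide homomorphism with kernel the stated group. Both follow at once from $\Phi_N/\Phi'_N=\langle z\rangle\subseteq Z(H_{N,N,N'})$ together with the preceding proposition, after which everything else is formal manipulation with the third isomorphism theorem and the already-established properties of $\phi_1$, $\phi_2$, $\psi$ and $\bar\psi$.
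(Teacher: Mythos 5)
Your proof is correct and takes essentially the same route as the paper, whose entire argument is the one-line observation that formula \ref{barpsi} reduces to $\bar\psi(\gamma\delta\gamma^{-1})=\bar\psi(\delta)$ because $\bar\phi_2$ vanishes on $\Phi'_N$; you have simply spelled out the consequences (normality of $\Phi''_N$, centrality of the image of $\Phi'_N$, and the identification $\Phi'_N/\Phi''_N\cong(\Z/N''\Z)^2$ via $[C,A]$ and $[C,B]$) that the paper leaves implicit.
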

\begin{proof}
It follows from formula \ref{barpsi}, as $\bar\phi_2$ vanishes on $\Phi'_N$. 
\end{proof}
We denote by $H'_{\Z/N\Z}$ the group ${\bar\Ga}(2)/\Phi_N''$

\begin{prop}
The group $H'_{\Z/N\Z}$ is of exponent $N$. In other words, for every $\gamma\in\bar\Ga(2)$, one has $\gamma^N\in\Phi''_N$.
\end{prop}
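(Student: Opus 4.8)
The plan is to prove that the $N$-th power map $p\colon G\to G$, $g\mapsto g^{N}$, on $G:=\bar\Ga(2)/\Phi''_N$ is a homomorphism with central image, and then to evaluate it on the two generators. Two facts will be used repeatedly and follow from the material above: first, $\bar\Ga(2)_4\subseteq\Phi''_N$ (because $\psi$ and $\bar\phi_2$ vanish on $\bar\Ga(2)_4\subseteq\bar\Ga(2)_3$), so $G$ is nilpotent of class $\le 3$; second, $A^{N},B^{N}\in\Phi''_N$ (because $\bar\psi$ and $\bar\phi_2$ vanish on $A^{N}$ and $B^{N}$, by the Proposition on $\bar\psi$ and the definition of $\bar\phi_2$).

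I would first record the structure of $G$. Its center contains $Z:=\gamma_{3}(G)$, and by the Corollary above $Z=\Phi'_N/\Phi''_N\simeq(\Z/N''\Z)^2$ while $G/Z=\bar\Ga(2)/\Phi'_N=H_{N,N,N'}$. Since $A^{N},B^{N}\in\Phi''_N$, one has $[G,G]=\bar\Ga(2)_2\Phi''_N/\Phi''_N=\Phi_N/\Phi''_N$, which $\bar\psi$ (whose third coordinate is $\bar\phi_2$) identifies with $(\Z/N''\Z)^2\times(\Z/N'\Z)$; in particular $[G,G]$ is abelian. Because $H_{N,N,N'}$ has exponent $N$ by the Proposition on exponents, $g^{N}\in Z$ for every $g\in G$.

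Next I would invoke the Hall--Petrescu collection formula: in a group of nilpotency class $\le 3$, for all $g,h$ and all $n\ge 1$,
$$
(gh)^{n}=g^{n}h^{n}\,c_{2}^{\binom{n}{2}}\,c_{3}^{\binom{n}{3}},\qquad c_{2}\in[G,G],\ \ c_{3}\in\gamma_{3}(G)=Z,
$$
where $c_{2},c_{3}$ depend only on $g,h$. For $n=N$ both correction terms are trivial: $N'\mid\binom{N}{2}$ and $N''\mid\binom{N}{2}$ (elementary, with $N'=N/\gcd(2,N)$ and $N''=N/\gcd(6,N)$), so $\binom{N}{2}$ annihilates $[G,G]\simeq(\Z/N''\Z)^2\times(\Z/N'\Z)$ and $c_{2}^{\binom{N}{2}}=1$; and $N''\mid\binom{N}{3}$ — equivalently $6\mid(N-1)(N-2)\gcd(6,N)$, which holds since $(N-1)(N-2)$ is even and $3$ divides $(N-1)(N-2)$ or $\gcd(6,N)$ — so $c_{3}^{\binom{N}{3}}=1$ as $Z$ has exponent $N''$. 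Hence $(gh)^{N}=g^{N}h^{N}$; since $g^{N},h^{N}$ lie in the abelian central subgroup $Z$, this is exactly the statement that $p\colon G\to Z$ is a group homomorphism.

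Finally, $G$ is generated by the images $a,b$ of $A,B$, and $p(a)=p(b)=1$ because $A^{N},B^{N}\in\Phi''_N$. A homomorphism that vanishes on a generating set is trivial, so $p\equiv 1$; that is, $\gamma^{N}\in\Phi''_N$ for every $\gamma\in\bar\Ga(2)$. (The exponent of $H'_{\Z/N\Z}$ is then exactly $N$, since its quotient $H_{N,N,N'}$ already has exponent $N$.) The whole argument hinges on $p$ being a homomorphism, and the one genuinely new input is the divisibility $N''\mid\binom{N}{3}$, the level-three analogue of the classical $N'\mid\binom{N}{2}$ behind the definition of $\Phi'_N$; that, together with pinning down $Z$, $[G,G]$ and the exponent of $G/Z$ from the previous propositions, is where I expect the care to be needed. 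Alternatively one can avoid Hall--Petrescu: using $\psi([\delta,\gamma])=(\phi_1(\gamma)\phi_2(\delta),0)$ to reduce to $\gamma=A^{a}B^{b}$, then collecting $(A^{a}B^{b})^{N}$ modulo $\bar\Ga(2)_4$ by hand, which meets the same arithmetic.
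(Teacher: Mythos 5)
Your proof is correct and takes essentially the same route as the paper: both arguments rest on a Hall--Petrescu-type collection formula in the class-$3$ quotient $G=\bar\Ga(2)/\Phi''_N$ and then evaluate the $N$-th power map on the generators $A$ and $B$, the only difference being that the paper derives the collection identity by hand by induction rather than citing it. Your explicit identification of $[G,G]$ and $\gamma_3(G)$ and the verification of the divisibilities $N'\mid\binom{N}{2}$ and $N''\mid\binom{N}{3}$ usefully spell out what the paper's terse concluding sentence leaves implicit.
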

\begin{proof}
It relies on relations for commutators, that, we presume, are well known. Let $G$ be a group. Suppose $G_4$ is trivial. Then $G_3$ is contained in the center of $G$. Let  $\alpha$, $\beta\in G$. Set $\gamma=[\alpha,\beta]$, $\alpha'=[\gamma^{-1},\alpha]$ and $\beta'=[\gamma^{-1},\beta]$. Let $n$ be an integer.
One has the relation 
\begin{equation}
\label{betanalpha}
\beta^n\alpha=\alpha\gamma^{-n}\beta^n\alpha'^n\beta'^{-n(n-1)/2}.
\end{equation}
We prove it by induction on $n$. Indeed, it holds for $n=0$. Suppose it holds for some value of $n$. We have 
$$
\beta^{n+1}\alpha=\beta\alpha\gamma^{-n}\beta^n\alpha'^n\beta'^{-n(n-1)/2}=\gamma^{-1}\alpha\beta\gamma^{-n}\beta^n\alpha'^n\beta'^{-n(n-1)/2}.
$$
We use the relations $\gamma^{-1}\beta=[\gamma^{-1},\beta]\beta\gamma^{-1}$ and $\gamma^{-1}\alpha=[\gamma^{-1},\alpha]\alpha\gamma^{-1}$.
Thus we get
$$
\beta^{n+1}\alpha=\alpha\gamma^{-n-1}\beta^{n+1}\alpha'^{n+1}\beta'^{-n-n(n-1)/2},
$$
which is the desired formula. We pass to the next step. We have
$$
(\alpha\beta)^n=\alpha^n\gamma^{-n(n-1)/2}\beta^n\alpha'^{-n(n-4)(n+1)}\beta'^{n(n-1)(n+1)/6}.
$$
We proceed again by induction on $n$. We suppose the formula holds for a certain value of $n$. We get
$$
(\alpha\beta)^{n+1}=\alpha^n\gamma^{-n(n-1)/2}\beta^n\alpha'^{-n(n-4)(n+1)}\beta'^{n(n-1)(n+1)/6}\alpha\beta.
$$
Using the formula \ref{betanalpha}, we get 
$$
(\alpha\beta)^{n+1}=\alpha^n\gamma^{-n(n-1)/2}\alpha\gamma^{-n}\beta^{n+1}\alpha'^{-n(n-4)(n+1)+n}\beta'^{n(n-1)(n+1)/6-n(n-1)/2}.
$$
We now use the formula  $\gamma^{-1}\alpha=[\gamma^{-1},\alpha]\alpha\gamma^{-1}$ and get
$$
(\alpha\beta)^{n+1}=\alpha^{n+1}\gamma^{-n(n-1)/2-n}\beta^{n+1}\alpha'^{-n(n-4)(n+1)+n-n(n-1)/2}\beta'^{n(n-1)(n+1)/6}, 
$$
which is the desired formula.

Consider the case where $G=H'_{\Z/N\Z}$. Since this group is generated by the classes of $A$ and $B$, which are of order $N$, it follows that all elements of $H'_{\Z/N\Z}$ are of order divisible by $N$. 
\end{proof}

\begin{remark}
Let $p$ be prime number. Stallings introduced the lower $p$-central series $(S_{k})_{k\ge1}$ as a particular case for $N=p$ of the following construction. One has $S_1=G$, and, for $k\ge2$, $S_{k+1}=[G,S_k](S_k)^N$, where the latter expression is the subgroup of $G$ generated by $[G,S_k]$ and $(S_k)^N$. 
Note that, when $G= {\bar\Ga}(2)$, one has $S_2=[{\bar\Ga}(2),{\bar\Ga}(2)]{\bar\Ga}(2)^N=\Phi_N$ and $S_3=[{\bar\Ga}(2),\Phi_N]\Phi_N^N$. Note that $S_3\subset \Phi_N'\subset S_2$. Since $A^N$ and $B^N$ do not belong to $S_3$, the groups $S_3$ and $\Phi'_N$ do not coincide.
\end{remark}

\subsection{Odd adelic completions}
Recall that, for $k\ge 1$, ${\bar\Ga}(2)_k$ is the $k$-th term in the lower central series of ${\bar\Ga}(2)$. Let $D_3=\{\pm {\rm Id}, \pm \begin{pmatrix}
 0 & -1 \\
 1 & 0 \\
\end{pmatrix}, \pm\begin{pmatrix}
 -1 & 1 \\
 1 & 1 \\
\end{pmatrix},  \pm\begin{pmatrix}
 1 & 1 \\
 1 & -1 \\
\end{pmatrix}\}$ in  ${\rm PSL}_{2}(\Z/3\Z)$
be the index $3$, $2$-Sylow subgroup of ${\rm PSL}_{2}(\Z/3\Z)$. It is isomorphic to the Klein group.

Recall that the derived subgroup of ${\rm PSL}_{2}(\Z)$ is the projective congruence subgroup of level $6$ whose image in ${\rm PSL}_{2}(\Z/3\Z)$ is $D_{3}$, and whose image in ${\rm PSL}_{2}(\Z/2\Z)$ is cyclic of order $3$.

Let 
\[
\hat\Z_{\rm odd}=\varprojlim_{n\,{\rm odd}} \Z/n\Z\simeq\prod_{p\ne2}\Z_{p}
\]
 be the profinite completion of $\Z$ away from the prime $2$.
Let $\hat D_{{\rm odd}}$ be the inverse image of $D_{3}$ in ${\rm PSL}_{2}(\hat\Z_{\rm odd})$. 

\begin{prop}
The image of ${\bar\Ga}(2)_2$ in ${\rm PSL}_{2}(\Z/3\Z)$ is equal to $D_{3}$. For $p$ prime, $p>3$, its image modulo $p$ is ${\rm PSL}_{2}(\Z/p\Z)$.
\end{prop}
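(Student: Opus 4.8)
The plan is to deduce both assertions from strong approximation for $\mathrm{SL}_2$ together with the elementary structure theory of $\mathrm{PSL}_2$ over finite fields. Fix an odd prime $p$. First I would recall that reduction $\mathrm{SL}_2(\Z)\to \mathrm{SL}_2(\Z/2p\Z)$ is surjective, and that by the Chinese remainder theorem (using $\gcd(2,p)=1$) one has $\mathrm{SL}_2(\Z/2p\Z)\cong \mathrm{SL}_2(\Z/2\Z)\times \mathrm{SL}_2(\Z/p\Z)$. Hence the restriction of reduction mod $p$ to $\Gamma(2)=\ker\bigl(\mathrm{SL}_2(\Z)\to \mathrm{SL}_2(\Z/2\Z)\bigr)$ is onto $\mathrm{SL}_2(\Z/p\Z)$. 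Since $-\mathrm{Id}\in\Gamma(2)$ and $-\mathrm{Id}$ dies in $\mathrm{PSL}_2(\Z/p\Z)$, the composite $\Gamma(2)\to \mathrm{SL}_2(\Z/p\Z)\to \mathrm{PSL}_2(\Z/p\Z)$ factors through ${\bar\Ga}(2)=\Gamma(2)/\{\pm\mathrm{Id}\}$ and stays surjective. So ${\bar\Ga}(2)$ maps onto $\mathrm{PSL}_2(\Z/p\Z)$ for every odd $p$.

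Next I would use that a surjective homomorphism carries the derived subgroup of the source onto that of the target. Applied to the surjection ${\bar\Ga}(2)\twoheadrightarrow \mathrm{PSL}_2(\Z/p\Z)$, this identifies the image of ${\bar\Ga}(2)_2=[{\bar\Ga}(2),{\bar\Ga}(2)]$ modulo $p$ with $[\mathrm{PSL}_2(\Z/p\Z),\mathrm{PSL}_2(\Z/p\Z)]$. For $p>3$ the group $\mathrm{PSL}_2(\Z/p\Z)$ is simple and nonabelian, hence perfect, so it equals its own derived subgroup, giving the second assertion. For $p=3$ one has $\mathrm{PSL}_2(\Z/3\Z)\cong A_4$; since the abelianization of $A_4$ is $\Z/3\Z$ and the unique proper nontrivial normal subgroup of $A_4$ is its Klein four-group, the derived subgroup $[A_4,A_4]$ is that Klein four-group, and one checks it is exactly the group $D_3$ listed above.

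The argument is essentially formal, so there is no genuine obstacle beyond invoking surjectivity of reduction and simplicity of $\mathrm{PSL}_2(\mathbb{F}_p)$; the only point requiring (mild) care is the last identification for $p=3$, matching the abstract group $[A_4,A_4]$ with the explicit matrix subgroup $D_3\subset \mathrm{PSL}_2(\Z/3\Z)$. I would do this by reducing the normal generators of ${\bar\Ga}(2)_2$ modulo $3$: one computes $C\equiv\psmat{0}{1}{-1}{0}$ and, say, $ACA^{-1}\equiv\psmat{1}{-1}{-1}{-1}$, both of which represent elements of $D_3$, and whose classes are two distinct involutions, hence already generate the Klein four-group $D_3$. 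This settles both inclusions at once. (Alternatively, the inclusion ``image $\subseteq D_3$'' for $p=3$ is immediate from ${\bar\Ga}(2)_2\subseteq \mathrm{PSL}_2(\Z)'$ together with the description of the image of $\mathrm{PSL}_2(\Z)'$ modulo $3$ recalled just above.)
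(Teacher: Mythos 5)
Your proof is correct and follows essentially the same route as the paper: both establish that $\bar\Gamma(2)$ surjects onto $\mathrm{PSL}_2(\Z/p\Z)$ for odd $p$ (the paper cites weak approximation where you spell out CRT), then use that a surjection carries derived subgroups onto derived subgroups, and conclude via perfectness of $\mathrm{PSL}_2(\F_p)$ for $p>3$ and the identification of the derived subgroup of $\mathrm{PSL}_2(\Z/3\Z)$ with $D_3$. Your only addition is making that last identification explicit (via $[A_4,A_4]=V_4$ and the reductions of $C$ and $ACA^{-1}$ modulo $3$), where the paper instead invokes its earlier description of the image of the derived subgroup of $\mathrm{PSL}_2(\Z)$ modulo $3$.
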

\begin{proof} 
Indeed, the images of ${\bar\Ga}(2)$ and ${\rm PSL}_{2}(\Z))$ in ${\rm PSL}_{2}(\Z/3\Z)$ coincide by weak approximation. Thus ${\bar\Ga}(2)_2$ modulo $3$ coincides with the derived subgroup of ${\rm PSL}_{2}(\Z/3\Z)$, which in turn is the reduction modulo $3$ of the derived subgroup of ${\rm PSL}_{2}(\Z))$. The second assertion is proved similarly.
\end{proof}

\begin{prop}
\label{oddadelic}
Let $k$ be an integer $\ge2$. The closure of ${\bar\Ga}(2)_k$ in ${\rm PSL}_{2}(\hat\Z_{\rm odd})$ is equal to $\hat D_{{\rm odd}}$.
\end{prop}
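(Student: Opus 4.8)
The plan is to identify the closure of ${\bar\Ga}(2)_k$ with a term of the topological lower central series of ${\rm PSL}_2(\hat\Z_{\rm odd})$, and then to compute the latter prime by prime. By strong approximation for ${\rm SL}_2$ the congruence subgroup ${\bar\Ga}(2)$ is dense in ${\rm PSL}_2(\hat\Z_{\rm odd})$ (there is no $2$-adic condition to impose), and for any dense subgroup $H$ of a profinite group $G$ one has $\overline{H_k}=G^{\langle k\rangle}$, the $k$-th term of the topological lower central series of $G$, i.e. the series obtained by taking, at each step, the closure of the commutator subgroup; this follows by induction on $k$ from continuity of the commutator map. So $\overline{{\bar\Ga}(2)_k}={\rm PSL}_2(\hat\Z_{\rm odd})^{\langle k\rangle}$, and it remains to prove this equals $\hat D_{\rm odd}$ for every $k\ge2$. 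The inclusion $\subseteq$ is immediate: ${\bar\Ga}(2)_k\subseteq{\bar\Ga}(2)_2$, whose reduction modulo $3$ is $D_3$ by the preceding proposition.

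For the term $k=2$: when $p\ge5$, ${\rm PSL}_2(\F_p)$ is non-abelian simple, so ${\rm PSL}_2(\Z_p)$ is topologically perfect — its topological commutator subgroup surjects onto ${\rm PSL}_2(\F_p)$, hence is everything by (the ${\rm PSL}_2$-analogue of) Serre's lemma that a closed subgroup of ${\rm SL}_2(\Z_p)$ reducing onto ${\rm SL}_2(\F_p)$ is the whole group, for $p\ge5$. When $p=3$ one computes ${\rm PSL}_2(\Z_3)^{\rm ab}={\rm PSL}_2(\F_3)^{\rm ab}=\Z/3$: the kernel of ${\rm PSL}_2(\Z/3^{n+1}\Z)\to{\rm PSL}_2(\Z/3^{n}\Z)$ is the adjoint module $\mathfrak{sl}_2(\F_3)$, which is irreducible and non-trivial as a module over ${\rm PSL}_2(\F_3)=A_4$ — as an $A_4$-module it is induced from a non-trivial character of the Klein four-subgroup — so it has vanishing coinvariants, whence the abelianization does not grow past level $1$. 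Therefore the topological abelianization of ${\rm PSL}_2(\hat\Z_{\rm odd})$ is $\Z/3$, realised through ${\rm PSL}_2(\hat\Z_{\rm odd})\to{\rm PSL}_2(\F_3)\to\Z/3$, and ${\rm PSL}_2(\hat\Z_{\rm odd})^{\langle2\rangle}$ is the kernel of this composite, i.e. the inverse image of $[{\rm PSL}_2(\F_3),{\rm PSL}_2(\F_3)]=D_3$, which is $\hat D_{\rm odd}$.

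To conclude I would show the series then stabilizes, namely that the closure of $[\hat D_{\rm odd},{\rm PSL}_2(\hat\Z_{\rm odd})]$ equals $\hat D_{\rm odd}$; an induction gives ${\rm PSL}_2(\hat\Z_{\rm odd})^{\langle k\rangle}=\hat D_{\rm odd}$ for all $k\ge2$. This amounts to the vanishing of the coinvariants $\bigl(\hat D_{\rm odd}^{\rm ab}\bigr)_{{\rm PSL}_2(\hat\Z_{\rm odd})}$, and, using $\hat\Z_{\rm odd}=\prod_{p\ne2}\Z_p$, it reduces to a statement at each prime: at $p\ge5$ the factor is all of ${\rm PSL}_2(\Z_p)$, which is perfect; at $p=3$ the factor is $\widetilde D$, the inverse image of $D_3$ in ${\rm PSL}_2(\Z_3)$, and one checks $\bigl(\widetilde D^{\rm ab}\bigr)_{{\rm PSL}_2(\Z_3)}=0$ by filtering $\widetilde D$ by congruence subgroups: the graded pieces are $D_3$ (on which $\Z/3$ permutes the three involutions cyclically, killing the coinvariants) and copies of $\mathfrak{sl}_2(\F_3)$ (whose ${\rm PSL}_2(\F_3)$-coinvariants vanish, as above), and the coinvariants functor is right-exact.

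The main obstacle is the prime $3$: one must exclude the lower central series descending below the inverse image of $D_3$, which rests on the two coinvariant-vanishing statements — for $D_3$ under the cyclic $\Z/3$-action and for the adjoint representation $\mathfrak{sl}_2(\F_3)$ under ${\rm PSL}_2(\F_3)$. The second is where genuine work lies: since $3\mid|A_4|$, semisimplicity is unavailable and the explicit module structure of $\mathfrak{sl}_2(\F_3)$ must be used. Everything else — strong approximation, the behaviour of the topological lower central series under direct products and central quotients, and Serre's lemma at the primes $\ge5$ — is routine.
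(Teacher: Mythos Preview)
Your proof is correct. Both your argument and the paper's rest on the same core fact at the prime~$3$ --- that the congruence filtration pieces $\mathfrak{sl}_2(\F_3)$ have vanishing ${\rm PSL}_2(\F_3)$-coinvariants, which the paper packages as the identity $[(1+3{\rm M}_2(\Z_3))_0,{\rm SL}_2(\Z_3)]=(1+3{\rm M}_2(\Z_3))_0$ --- but the organizing frameworks differ. The paper works at each finite level $n$, identifying the image of $\bar\Gamma(2)_2$ modulo $n$ directly with the derived subgroup of ${\rm PSL}_2(\Z/n\Z)$ (an index-$3$ subgroup, hence $D_n$), and then handles $k=3$ by an explicit congruence-subgroup argument before observing stabilisation. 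You instead pass immediately to the profinite picture via the lemma $\overline{H_k}=G^{\langle k\rangle}$ for $H$ dense in $G$, reducing everything to a computation of the topological lower central series of ${\rm PSL}_2(\hat\Z_{\rm odd})$, which you then do prime by prime using Serre's lemma for $p\ge5$ and the coinvariant calculations for $p=3$. Your route is cleaner conceptually and makes the uniformity in $k$ transparent from the outset; the paper's approach is more elementary in that it avoids naming the topological lower central series and stays at finite level throughout. The identification of $\mathfrak{sl}_2(\F_3)$ as induced from a nontrivial character of the Klein subgroup is a nice touch that makes the coinvariant vanishing immediate via the adjunction $(\mathrm{Ind}_H^G M)_G\cong M_H$.
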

\begin{proof}
We prove this first for $k=2$. Let $n$ be an odd integer divisible by $3$. Let $D_n$ be the inverse image of $D_{3}$ in ${\rm PSL}_{2}(\Z/n\Z)$. The image of ${\bar\Ga}(2)_2$ modulo $n$ coincide with the image of the derived subgroup of ${\rm PSL}_{2}(\Z))$ modulo $n$, which is a subgroup of index $3$ of ${\rm PSL}_{2}(\Z/n\Z)$. 
Such a subgroup can only be $D_n$. Thus we obtain the proposition for $k=2$.

We prove the proposition for $k=3$.
Let $I_n$ be the image of ${\bar\Ga}(2)_3$ in ${\rm PSL}_{2}(\Z/n\Z)$. 
Note that we have an exact sequence 
\[
1\rightarrow K_n \rightarrow I_n\rightarrow D_3\rightarrow 1.
\]
Since we have an exact sequence
\[
1\rightarrow \pm1+3{\rm M}_2( \Z/\frac{n}{3}\Z)_0 \rightarrow {\rm PSL}_{2}(\Z/n\Z)\rightarrow {\rm PSL}_{2}(\Z/3\Z)\rightarrow 1
\]
where ${\rm M}_2( \Z/\frac{n}{3}\Z)_0$ is the subgroup of ${\rm M}_2( \Z/\frac{n}{3}\Z)$ made of matrices of trace $0$, the equality $I_n=D_n$ would follow from the inclusion $1+3{\rm M}_2( \Z/\frac{n}{3}\Z)_0\subset K_n$.
It remains to establish the latter inclusion.
Since ${\rm PSL}_{2}(\Z/n\Z)$ is equal to its derived subgroup when $n$ is prime to $6$, by the Chinese remainder theorem, it is enough to prove it when $n$ is a power of $3$.

It follows from the equality $[(1+p{\rm M}_2(\Z_p))_{0},{\rm SL}_2(\Z_p)]=(1+p{\rm M}_2(\Z_p))_{0}$, valid for any prime $p$, and the fact that the closure of ${\bar\Ga}(2)_2$ in ${\rm PSL}_2(\Z_3)$ contains $[(1+3{\rm M}_2(\Z_3))_{0}$.

The general case $k\ge3$ of the proposition is now immediate. Indeed, since ${\bar\Ga}(2)_3$ and ${\bar\Ga}(2)_2$ have the same image modulo $n$, those images are the second and third respectively derived subgroups of ${\rm PSL}_{2}(\Z/n\Z)$. Thus the lower central series of ${\rm PSL}_{2}(\Z/n\Z)$ stabilizes to the image modulo $n$ of ${\bar\Ga}(2)_k$ for any $k\ge 3$.
\end{proof}

\begin{prop}
The closure of $\Phi_{N}$ in ${\rm PSL}_{2}(\hat\Z_{\rm odd})$ is ${\rm PSL}_{2}(\hat\Z_{\rm odd})$ if $3$ does not divide $N$. It is $\hat H_{{\rm odd}}$ if $3$ divides $N$.
\end{prop}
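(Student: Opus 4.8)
The plan is to trap the closure $\overline{\Phi_N}$ of $\Phi_N$ in ${\rm PSL}_2(\hat\Z_{\rm odd})$ between two explicit subgroups and then decide which of the two extremes occurs by reducing modulo $3$. First I would use the inclusion ${\bar\Ga}(2)_2\subset\Phi_N$: passing to closures and invoking Proposition \ref{oddadelic} with $k=2$, this gives
$\hat D_{\rm odd}=\overline{{\bar\Ga}(2)_2}\subset\overline{\Phi_N}\subset{\rm PSL}_2(\hat\Z_{\rm odd})$.

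Next I would exploit the structure of the top quotient. Since $D_3$ is normal in ${\rm PSL}_2(\Z/3\Z)$ (it is the derived subgroup, equivalently its $2$-Sylow subgroup $\triangleleft A_4$), the group $\hat D_{\rm odd}$ is closed and normal in ${\rm PSL}_2(\hat\Z_{\rm odd})$, and reduction modulo $3$ induces an isomorphism ${\rm PSL}_2(\hat\Z_{\rm odd})/\hat D_{\rm odd}\simeq{\rm PSL}_2(\Z/3\Z)/D_3\simeq\Z/3\Z$. As $\Z/3\Z$ has no proper nontrivial subgroup, the only closed subgroups of ${\rm PSL}_2(\hat\Z_{\rm odd})$ lying between $\hat D_{\rm odd}$ and the whole group are these two, and $\overline{\Phi_N}$ is the whole group precisely when the image of $\Phi_N$ modulo $3$ is not contained in $D_3$. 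Recalling that $\Phi_N$ is generated by ${\bar\Ga}(2)_2$ together with $A^N$ and $B^N$, and that ${\bar\Ga}(2)_2$ maps into $D_3$, this image is the subgroup of ${\rm PSL}_2(\Z/3\Z)$ generated by $D_3$ and the classes of $A^N$ and $B^N$.

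The remaining point is an elementary computation modulo $3$: $A\equiv\begin{pmatrix}1&-1\\0&1\end{pmatrix}\pmod{3}$ has order $3$ in ${\rm PSL}_2(\Z/3\Z)$, hence, being of order prime to $|D_3|=4$, its class generates ${\rm PSL}_2(\Z/3\Z)/D_3$. If $3\nmid N$, then $A^N$ still has order $3$ modulo $3$, so its class alone generates ${\rm PSL}_2(\Z/3\Z)/D_3$; thus the image of $\Phi_N$ modulo $3$ is all of ${\rm PSL}_2(\Z/3\Z)$ and $\overline{\Phi_N}={\rm PSL}_2(\hat\Z_{\rm odd})$. If $3\mid N$, then $A^N$ and $B^N$ both reduce to the identity modulo $3$, so the image of $\Phi_N$ modulo $3$ is exactly $D_3$ and $\overline{\Phi_N}=\hat D_{\rm odd}=\hat H_{\rm odd}$.

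I do not anticipate a serious obstacle: granted Proposition \ref{oddadelic}, the statement follows formally, and all the content is concentrated at the prime $3$, the only place where ${\rm PSL}_2(\Z_p)$ fails to be its own closed derived subgroup. The two points deserving a moment's care are the identification ${\rm PSL}_2(\hat\Z_{\rm odd})/\hat D_{\rm odd}\simeq\Z/3\Z$ (which rests on the normality of $D_3$ and on the surjectivity of the reduction maps already used in the preceding propositions) and the fact that $A$ has order $3$ modulo $3$ — since that is exactly what makes the class of $A^N$ survive in, or vanish from, this abelian quotient according to whether $3\nmid N$ or $3\mid N$.
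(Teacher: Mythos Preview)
Your proof is correct and follows essentially the same approach as the paper's. Both arguments sandwich $\overline{\Phi_N}$ between $\hat D_{\rm odd}$ (via ${\bar\Ga}(2)_2\subset\Phi_N$ and Proposition~\ref{oddadelic}) and ${\rm PSL}_2(\hat\Z_{\rm odd})$, then decide between the two by examining whether $A^N$ reduces to the identity modulo~$3$; your version simply makes the index-$3$ quotient structure and the generation of $\Phi_N$ by ${\bar\Ga}(2)_2$, $A^N$, $B^N$ more explicit than the paper's terser presentation.
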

\begin{proof} 
If $3$ does not divide $N$, $\Phi_N$ contains a non-zero upper triangular matrix (for instance $A'^N$) which is not the identity modulo $3$. Its closure in  ${\rm PSL}_{2}(\hat\Z_{\rm odd})$ contains a maximal proper subgroup of index $3$, namely $\hat D_{{\rm odd}}$, and an element which is not in that subgroup. Therefore the closure is ${\rm PSL}_{2}(\hat\Z_{\rm odd})$.
If $3$ divides $N$, since both $A^{3N}$ and $B^{3N}$ vanish modulo $3$, the images of $\Phi_N$ and ${\bar\Ga}(2)_2$ coincide in ${\rm PSL}_{2}(\Z/3\Z)$. Hence the result. 

\end{proof}

\begin{prop}
\label{closureHeisenberg}
The closure of $\Phi'_{N}$ in ${\rm PSL}_{2}(\hat\Z_{\rm odd})$ is ${\rm PSL}_{2}(\hat\Z_{\rm odd})$ if $3$ does not divide $N$. It is $\hat H_{{\rm odd}}$ if $3$ divides $N$.
\end{prop}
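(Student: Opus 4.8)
The plan is to squeeze the closure of $\Phi'_N$ between two subgroups whose closures are already known, by means of the chain of inclusions
$$
{\bar\Ga}(2)_3 \subseteq \Phi'_N \subseteq \Phi_N .
$$
The right-hand inclusion is immediate, since $\Phi'_N$ is by construction the kernel of the homomorphism $\bar\phi_2$ defined on $\Phi_N$; the left-hand one holds because $\Phi'_N=\Ga_{N,N,N'}$ contains the kernel $[{\bar\Ga}(2),[{\bar\Ga}(2),{\bar\Ga}(2)]]={\bar\Ga}(2)_3$ of the surjection ${\bar\Ga}(2)\to H_{\bf Z}$. Passing to closures in ${\rm PSL}_{2}(\hat\Z_{\rm odd})$ and invoking Proposition \ref{oddadelic} with $k=3$ together with the previous proposition, I would obtain
$$
\hat D_{{\rm odd}}=\overline{{\bar\Ga}(2)_3}\subseteq \overline{\Phi'_N}\subseteq \overline{\Phi_N}.
$$

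For the case $3\mid N$ this already finishes the job: the previous proposition identifies $\overline{\Phi_N}$ with $\hat H_{{\rm odd}}=\hat D_{{\rm odd}}$, so the two bounds coincide and $\overline{\Phi'_N}=\hat D_{{\rm odd}}$.

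For the case $3\nmid N$, the remaining task is to show that $\overline{\Phi'_N}$ strictly contains $\hat D_{{\rm odd}}$, which forces it to be all of ${\rm PSL}_{2}(\hat\Z_{\rm odd})$: since $\hat D_{{\rm odd}}$ contains the principal congruence subgroup of level $3$, the closed subgroups lying between $\hat D_{{\rm odd}}$ and ${\rm PSL}_{2}(\hat\Z_{\rm odd})$ correspond to the subgroups of ${\rm PSL}_{2}(\Z/3\Z)\cong A_4$ containing the Klein four-group $D_3$, and there are only two such. To produce an element of $\Phi'_N$ outside $\hat D_{{\rm odd}}$ I would take $A^N$, one of the defining generators of $\Phi'_N$: its reduction modulo $3$ is the class of $\begin{pmatrix}1&2N\\0&1\end{pmatrix}$ with $2N\not\equiv 0\pmod 3$, hence an element of order $3$ in ${\rm PSL}_{2}(\Z/3\Z)$, which cannot belong to the $2$-Sylow subgroup $D_3$. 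So $A^N\notin\hat D_{{\rm odd}}$, and therefore $\overline{\Phi'_N}={\rm PSL}_{2}(\hat\Z_{\rm odd})$.

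I do not expect a serious obstacle here; the single point that needs a moment's care is the maximality of $\hat D_{{\rm odd}}$ among closed subgroups, which is exactly the index-$3$ bookkeeping already carried out in the proof of the preceding proposition. Everything else is formal from the two results cited.
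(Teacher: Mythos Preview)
Your argument is correct and uses the same ingredients as the paper: the inclusion ${\bar\Ga}(2)_3\subset\Phi'_N$ combined with Proposition~\ref{oddadelic}, and the element $A^N\in\Phi'_N$ whose reduction modulo $3$ escapes $D_3$ when $3\nmid N$. The only organizational difference is that the paper first shows uniformly that $\overline{\Phi'_N}=\overline{\Phi_N}$ (using that $\Phi_N$ is generated by $A^N$, $B^N$, and ${\bar\Ga}(2)_2$, all of which lie in $\overline{\Phi'_N}$) and then invokes the preceding proposition, whereas you treat the two cases separately; this is a matter of packaging, not substance.
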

\begin{proof} 
The closure of ${\bar\Ga}(2)_2$ and ${\bar\Ga}(2)_3$ coincide modulo $n$ for every $n$ divisible by $3$, as we have just established. Thus the closure of $\Phi'_N$ in ${\rm PSL}_{2}(\hat\Z_{\rm odd})$ contains ${\bar\Ga}(2)_2$. 
Since the group $\Phi'_{N}$ contains the matrices $A^N$ and $B^N$, its closure contains $\Phi_N$. Thus the closure of $\Phi'_N$ in ${\rm PSL}_{2}(\hat\Z_{\rm odd})$ is equal to the closure of $\Phi_{N}$ in ${\rm PSL}_{2}(\hat\Z_{\rm odd})$.

\end{proof}
Let $\bar\Gamma'(2)=\bar\Gamma(2)\cap \hat D_{{\rm odd}}$. It is a subgoup of index $3$ of $\bar\Gamma(2)$.
\begin{cor}
Let $\Gamma$ be a congruence subgroup of $\bar\Gamma(2)$. One has $\Gamma\Phi'_N= \bar\Gamma'(2)$ if $\Gamma\subset\bar\Gamma'(2)$ and $3$ divides $N$. One has $\Gamma\Phi'_N= \bar\Gamma(2)$ otherwise.

Let $n$ be an even integer. In particular, one has $\Gamma(n)\Phi'_{N}=\bar\Gamma(2)$ if $3$ does not divide either $n$ or $N$. 
One has  $\Gamma(n)\Phi'_{N}=\bar\Gamma'(2)$ if $3$ divides both $n$ and $N$.
\end{cor}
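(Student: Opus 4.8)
The plan is to determine all congruence subgroups of $\bar\Gamma(2)$ that contain $\Phi'_N$; it will turn out there are only one or two of them, and the corollary follows by comparing $\Gamma\Phi'_N$ with those.

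\emph{First reductions.} The group $\Phi'_N=\Ga_{N,N,N'}$ is the kernel of the homomorphism $\bar\Gamma(2)\to H_{N,N,N'}$, hence normal; so $\Gamma\Phi'_N$ is a subgroup of $\bar\Gamma(2)$, namely the preimage of the image of $\Gamma$ in $H_{N,N,N'}$. Since it contains the congruence subgroup $\Gamma$ it is itself a congruence subgroup, so $\Gamma\Phi'_N=\bar\Gamma(2)\cap\overline{\Gamma\Phi'_N}$, where the bar denotes closure in the congruence completion $\widehat{\bar\Gamma(2)}$ of $\bar\Gamma(2)$, viewed inside $\prod_{p}\mathrm{PSL}_2(\Z_p)$. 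By weak approximation $\widehat{\bar\Gamma(2)}$ is the product of its pro-$2$ part $G_2$ (the closure of $\bar\Gamma(2)$ in $\mathrm{PSL}_2(\Z_2)$) and its odd part $\mathrm{PSL}_2(\hat\Z_{\rm odd})$; let $\pi$ be the projection onto the odd factor. As $\overline{\Gamma\Phi'_N}\supseteq\overline{\Phi'_N}$, everything reduces to computing $\overline{\Phi'_N}$.

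\emph{Computing $\overline{\Phi'_N}$.} I claim $\overline{\Phi'_N}=\widehat{\bar\Gamma(2)}$ if $3\nmid N$, and $\overline{\Phi'_N}=\pi^{-1}(\hat D_{\rm odd})$ (the preimage of $D_3$) if $3\mid N$. The inclusion $\subseteq$ is immediate from Proposition \ref{closureHeisenberg}, which computes $\pi(\overline{\Phi'_N})$, together with $\overline{\Phi'_N}\subseteq\widehat{\bar\Gamma(2)}$. For $\supseteq$, since $\pi(\overline{\Phi'_N})$ equals $\mathrm{PSL}_2(\hat\Z_{\rm odd})$, resp. $\hat D_{\rm odd}$, by Proposition \ref{closureHeisenberg}, it suffices to prove $\overline{\Phi'_N}\supseteq\ker\pi=G_2$; then $\overline{\Phi'_N}=\pi^{-1}\big(\pi(\overline{\Phi'_N})\big)$. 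This is the step that uses that $N$ is odd: $A^N,B^N\in\Phi'_N$, and $N$ is a unit in $\Z_2$, so $A^N$ (resp. $B^N$) topologically generates the procyclic closure of $\langle A\rangle$ (resp. $\langle B\rangle$) inside $G_2$; moreover, choosing integers $k_i$ (by the Chinese remainder theorem) with $k_i\to N^{-1}$ in $\Z_2$ and $k_i\to 0$ in $\hat\Z_{\rm odd}$, the elements $A^{Nk_i}\in\Phi'_N$ converge in $\widehat{\bar\Gamma(2)}$ to the element of $G_2$ that equals $A$ in the $2$-adic factor and is trivial in the odd factor, and likewise for $B$. As $G_2$ is topologically generated by those two elements, $G_2\subseteq\overline{\Phi'_N}$, proving the claim.

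\emph{Conclusion.} The group $\pi^{-1}(\hat D_{\rm odd})$ has index $3$ in $\widehat{\bar\Gamma(2)}$ and is maximal there, because $D_3$ is maximal in $\mathrm{PSL}_2(\Z/3\Z)$, and $\bar\Gamma(2)\cap\pi^{-1}(\hat D_{\rm odd})=\bar\Gamma'(2)$. Hence the congruence subgroups of $\bar\Gamma(2)$ containing $\Phi'_N$ are exactly $\bar\Gamma(2)$ if $3\nmid N$, and $\bar\Gamma(2)$ and $\bar\Gamma'(2)$ if $3\mid N$ (and in the latter case $\Phi'_N\subseteq\bar\Gamma'(2)$). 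Thus $\Gamma\Phi'_N=\bar\Gamma(2)$ when $3\nmid N$; and when $3\mid N$, $\Gamma\Phi'_N$ is one of $\bar\Gamma(2),\bar\Gamma'(2)$, and it lies in $\bar\Gamma'(2)$ exactly when $\Gamma\subseteq\bar\Gamma'(2)$, giving $\bar\Gamma'(2)$ in that case and $\bar\Gamma(2)$ otherwise. For the final two assertions: for $n$ even $\Gamma(n)$ is a congruence subgroup of $\bar\Gamma(2)$, and $\Gamma(n)\subseteq\bar\Gamma'(2)$ if and only if the image of $\Gamma(n)$ in $\mathrm{PSL}_2(\Z/3\Z)$ lies in $D_3$; by weak approximation this image is trivial when $3\mid n$ and is all of $\mathrm{PSL}_2(\Z/3\Z)$ when $3\nmid n$, so $\Gamma(n)\subseteq\bar\Gamma'(2)$ iff $3\mid n$, and the two displayed cases are instances of the general statement. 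The one genuinely delicate point is the inclusion $G_2\subseteq\overline{\Phi'_N}$: it is precisely what forces $N$ to be odd, for when $N$ is even the $2$-adic closure of $\Phi'_N$ is a proper subgroup of $G_2$ and the corollary fails.
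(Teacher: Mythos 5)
Your proof is correct, and it follows the route the paper clearly intends: the corollary is stated immediately after Proposition \ref{closureHeisenberg} with no proof at all, the expected argument being exactly your reduction ``congruence subgroups are closed in the congruence topology, so $\Gamma\Phi'_N$ is determined by the closure of $\Phi'_N$.'' But you supply two things the paper leaves entirely implicit, and both are worth noting. First, the paper only ever computes closures in ${\rm PSL}_2(\hat\Z_{\rm odd})$, whereas a congruence subgroup $\Gamma$ of $\bar\Gamma(2)$ may have level divisible by a high power of $2$ (e.g.\ $\Gamma=\overline{\Gamma(4)}$); your argument that the $2$-adic component $G_2$ of the congruence completion is contained in $\overline{\Phi'_N}$ --- via $A^{Nk_i}, B^{Nk_i}$ with $k_i\to N^{-1}$ in $\Z_2$ and $k_i\to 0$ in $\hat\Z_{\rm odd}$ --- is genuinely needed and is not in the paper. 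Second, that step is exactly where oddness of $N$ enters, and your observation that the statement fails for even $N$ is right: for $N$ even, $\Phi'_N$ lands inside $\pm\Gamma(4)$ modulo $4$ (the quotient $\Gamma(2)/\Gamma(4)$ being abelian of exponent $2$), so e.g.\ $\overline{\Gamma(4)}\,\Phi'_2\ne\bar\Gamma(2)$ even though $3\nmid 2$. The corollary as printed omits the hypothesis ``$N$ odd,'' which is assumed in Section 4 but not in this section; your proof makes clear that it is required. One cosmetic remark: what you (and the paper) call weak approximation is really strong approximation for ${\rm SL}_2$, and the paper's $\hat H_{\rm odd}$ in Proposition \ref{closureHeisenberg} should be read as $\hat D_{\rm odd}$, as you did.
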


\section{The associated Riemann surfaces}

\subsection{The Riemann surface $X_{M, N,L}$}
Denote by $X_{M, N,L}$ the compactified modular curve defined by $\Ga_{M, N, L}$.
\begin{prop}
The genus $g_{M, N,L}$ of the curve $X_{M, N,L}$ is given by the following formulas. Denote by $T$ the lowest common multiple of $M$ and $N$.
Suppose $T$ is even and $T/L$ is odd, then one has
\[
g_{M, N,L}:=g(X_{M, N,L})=(NML-NL-ML-NML/2T)/2+1.
\]
Suppose $T$ is odd or $T/L$ is even, then one has
\[
g_{M, N,L}:=g(X_{M, N,L})=(NML-NL-ML-NML/T)/2+1.
\]

\end{prop}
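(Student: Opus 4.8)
The plan is to read $g_{M,N,L}$ off the Riemann--Hurwitz formula for the covering of compact Riemann surfaces $\pi\colon X_{M,N,L}\to X(2)$, where $X(2)=X_{{\bar\Ga}(2)}$. Since $\Ga_{M,N,L}$ is, by definition, the kernel of the surjection ${\bar\Ga}(2)\to H_{\bf Z}\to H_{M,N,L}$, the map $\pi$ is a Galois covering with group $H_{M,N,L}$, of degree $d=\#H_{M,N,L}=MNL$. Because ${\bar\Ga}(2)$ is free of rank $2$, hence torsion-free, it acts freely on $\mfH$, so $X(2)$ has genus $0$ and its open part $Y(2)$ is a thrice-punctured sphere; consequently $\pi$ is unramified over $Y(2)$, and all of its ramification lies above the three cusps $0$, $1$, $\infty$ of $X(2)$.

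Next I would determine the local behaviour over those three cusps. The primitive parabolic elements of ${\bar\Ga}(2)$ fixing $\infty$, $0$ and $1$ are, up to conjugacy and inversion, $A$, $B$ and $AB^{-1}$ (one checks directly that $AB^{-1}$ is parabolic and fixes the cusp $1$). Since $\pi$ is Galois, all points of $X_{M,N,L}$ above a given cusp of $X(2)$ share the same ramification index $e$, equal to the order in $H_{M,N,L}$ of the image of the corresponding parabolic generator, and there are $d/e$ of them. The images of $A$ and $B$ are $x$ and $y$, of orders $N$ and $M$; the image of $AB^{-1}$ is $xy^{-1}$. From the identity $(xy^{-1})^{n}=x^{n}z^{-n(n-1)/2}y^{-n}$ --- the exact analogue, proved in the same way, of the formula $(xy)^{n}=x^{n}z^{n(n-1)/2}y^{n}$ used in the computation of the exponent of $H_{M,N,L}$ --- one sees that $(xy^{-1})^{n}=1$ if and only if $T\mid n$ and $L\mid n(n-1)/2$, where $T$ denotes, as in the statement, the lowest common multiple of $M$ and $N$. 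Hence the order $\tau$ of $xy^{-1}$ in $H_{M,N,L}$ equals $T$ when $T$ is odd or $T/L$ is even, and $2T$ otherwise.

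Now Riemann--Hurwitz. The ramification contribution over the three cusps is $(d-d/N)+(d-d/M)+(d-d/\tau)$, so, $X(2)$ having genus $0$,
$$
2g_{M,N,L}-2=-2d+\left(d-\frac{d}{N}\right)+\left(d-\frac{d}{M}\right)+\left(d-\frac{d}{\tau}\right)=d-\frac{d}{N}-\frac{d}{M}-\frac{d}{\tau}.
$$
Substituting $d=MNL$, $d/N=ML$, $d/M=NL$ and $d/\tau=MNL/\tau$, and taking $\tau=2T$ when $T$ is even and $T/L$ is odd and $\tau=T$ in all remaining cases, produces precisely the two formulas claimed.

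The only step that is not routine bookkeeping is the evaluation of $\tau$, the order of $xy^{-1}$ in $H_{M,N,L}$; but this is the same divisibility-and-parity analysis (in terms of $L$ and $T$) already performed for $xy$ when computing the exponent of $H_{M,N,L}$, transported verbatim to $xy^{-1}$. Everything else --- that $\pi$ is Galois of degree $MNL$, the equality $\#H_{M,N,L}=MNL$, and the classical facts that $X(2)$ has genus $0$ with exactly three cusps --- is standard, so I do not expect any further obstacle.
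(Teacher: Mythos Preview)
Your proof is correct and follows essentially the same approach as the paper: apply Riemann--Hurwitz to the Galois covering $X_{M,N,L}\to X(2)$ of degree $MNL$, note that ramification occurs only over the three cusps, and compute the ramification indices as the orders of the images of the parabolic generators $A$, $B$ and a stabilizer of the cusp $1$ in $H_{M,N,L}$. The only cosmetic differences are that the paper uses $A^{-1}B$ (stabilizing the cusp $-1$) instead of your $AB^{-1}$ (stabilizing $1$), and computes its order via $D=[A^{-1},B]$ and the identity $(A^{-1}B)^{T}=D^{T(T-1)/2}A^{-T}B^{T}$ rather than your formula $(xy^{-1})^{n}=x^{n}z^{-n(n-1)/2}y^{-n}$; these lead to the same dichotomy on $\tau$.
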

\begin{proof}
We use Riemann-Hurwitz formula for the morphism $X_{N,M,L}\rightarrow X(2)$. Since $\Gamma(2)$ has no elliptic elements, the ramification points of this morphism reside entirely among the cusps.

Concerning the cusps above $0$ (resp. $\infty$), note that the stabilizer of the rational number $0$ (resp. $\infty$) in $P\Gamma(2)$ is generated by $B$ (resp. $A$). As the morphism  $X_{N,M,L}\rightarrow X(2)$ is Galois, the ramification index is independent of the chosen cusp, and is the order of the orbit of $B$ (resp. of $A$) acting on $\Gamma_{M,N,L}\backslash\Gamma(2)$. This is $N$ (resp. $M$) by definition of $\Gamma_{M,N,L}$. Concerning the cusps above $1$, the stabilizer in $P\Gamma(2)$ of the rational number $-1$ is generated by $A^{-1}B$. It remains to determine the order of $A^{-1}B$ in $\Gamma_{M,N,L}\backslash\Gamma(2)=H_{M,N,L}$.

The abelianization provides a map : $H_{M,N,L}\rightarrow {\bf Z}/M{\bf Z}\times {\bf Z}/N{\bf Z}$ which sends $A^{-1}B$ to $(1,-1)$. The latter element is of order $T$, which leads us to examine the order of $(A^{-1}B)^{T}$ in $H_{M,N,L}$.
Denote by $D=[A^{-1},B]$. It belongs to and generates the center of $H_{M,N,L}$. 
It is of order $L$. Note the formula $A^{-1}B=DBA^{-1}$. 
Thus one has $(A^{-1}B)^{T}=D^{k}A^{-T}B^{T}$ in $H_{M,N,L}$, where $k$ is the number of factors $A^{-1}$ to the right of a factor $B$ in the $(A^{-1}B)^{T}$ written as a product of $2T$ factors. 
One has $k=1+2+...+(T-1)=T(T-1)/2$. This is why the order of $(A^{-1}B)^{T}$ is $1$ if $L$ is odd or if $T/L$ is even. This order is $2$ otherwise. 

Thus the ramification index $e$ of any cusp above $1$ is equal to $T$ if $L$ is odd or if $T/L$ is even. It is $2T$ otherwise. We can now apply the Riemann-Hurwitz formula for the dominant morphism $X_{M,N,L}\rightarrow X(2)$ :

\[
2 g_{M, N,L}-2=-2d+\sum_{x \in X_{M, N, L}}(e_x-1). 
\]

We have $d=|H_{M,N,L}|=MNL$. One gets

\[
2 g_{M, N,L}-2=-2MNL+NL(M-1)+ML(N-1)+NML(1-1/e)
\]
and 
\[
g_{M, N,L}=NML-NL-ML-NML/e+1
\]
The lemma follows from the calculation of $e$.

\end{proof}

\subsection{The Riemann surfaces $X_N$, $X'_N$ and $X''_N$}
All three groups $\Phi_N$, $\Phi_N'$ and $\Phi_N''$ act on the upper half-plane $\tH$. We denote respectively by $X_N$, $X'_N$ and $X''_N$ the corresponding completed modular curves. 

\begin{prop}
Both morphisms $X'_N\rightarrow X_N$ and $X''_N\rightarrow X'_N$ (and therefore $X''_N\rightarrow X_N$ as well) are unramified. The covering $X'_{N}\rightarrow X_{N}$ is cyclic of degree $N'$.
The covering  $X''_{N}\rightarrow X'_{N}$ is Galois with group $(\Z/N''\Z)^{2}$. The covering $X''_{N}\rightarrow X_{N}$ is abelian with Galois group $(\Z/N''\Z)^{2}\times(\Z/N'\Z)$.
\end{prop}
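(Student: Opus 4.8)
The plan is to read everything off from the group theory of the previous section, exploiting that all three coverings are Galois. First I would record that $\Phi_N$, $\Phi'_N$ and $\Phi''_N$ are normal in $\bar\Ga(2)$: for $\Phi_N=\Ga_{N,N,1}$ and $\Phi'_N=\Ga_{N,N,N'}$ because they are kernels of homomorphisms out of $\bar\Ga(2)$, and for $\Phi''_N$ because formula~\ref{barpsi} shows conjugation by $\bar\Ga(2)$ preserves $\Phi''_N=\ker\Theta$, where $\Theta\colon\Phi_N\to(\Z/N''\Z)^2\times(\Z/N'\Z)$ denotes $\bar\psi$ with its first two components reduced modulo $N''$ (equivalently, this is the content of the corollary above). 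Hence $X'_N\to X_N$, $X''_N\to X'_N$ and $X''_N\to X_N$ are Galois, with groups $\Phi_N/\Phi'_N$, $\Phi'_N/\Phi''_N$ and $\Phi_N/\Phi''_N$ respectively. Now $\bar\phi_2\colon\Phi_N\to\Z/N'\Z$ is onto (it sends $C$ to $1$) with kernel $\Phi'_N$, giving the cyclic group of order $N'$; the homomorphism $\Theta$ is onto because its last component $\bar\phi_2$ is onto and the elements $[C,A]=C\,(ACA^{-1})^{-1}$ and $[C,B]=C\,(BCB^{-1})^{-1}$, which lie in $\bar\Ga(2)_3\subseteq\Phi_N$, have $\Theta$-images the standard generators of the factor $(\Z/N''\Z)^2$ by formula~\ref{psigen}; thus $\Phi_N/\Phi''_N\cong(\Z/N''\Z)^2\times(\Z/N'\Z)$, which is abelian. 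Restricting $\Theta$ to $\Phi'_N=\ker\bar\phi_2$ kills the last component but, since $[C,A],[C,B]\in\bar\Ga(2)_3\subseteq\Phi'_N$, still surjects onto $(\Z/N''\Z)^2$ with kernel $\Phi''_N$, so $\Phi'_N/\Phi''_N\cong(\Z/N''\Z)^2$.

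For the unramifiedness I would use that $\bar\Ga(2)$ is free, hence torsion-free, so it acts freely on $\tH$ and any covering $X_\Gamma\to X_{\Gamma'}$ with $\Gamma\triangleleft\Gamma'\subseteq\bar\Ga(2)$ can ramify only over cusps, where the ramification index over a cusp $s$ equals the order in $\Gamma'/\Gamma$ of a generator of $\mathrm{Stab}_{\Gamma'}(s)$ (as in the genus computation above); equivalently, the covering is unramified precisely when every parabolic element of $\Gamma'$ lies in $\Gamma$. Every parabolic element of $\bar\Ga(2)$ is a $\bar\Ga(2)$-conjugate of a power of one of $A$, $B$, $A^{-1}B$, these generating the stabilizers of the three cusps $\infty$, $0$, $1$ of $X(2)$ (the stabilizer of $-1\sim1$ being $\langle A^{-1}B\rangle$). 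Since $\bar\phi_1$ sends $A^m$, $B^m$, $(A^{-1}B)^m$ to $(m,0)$, $(0,m)$, $(-m,m)$, such a power lies in $\Phi_N=\ker\bar\phi_1$ only if $N\mid m$; hence every parabolic element of $\Phi_N$ — a fortiori of $\Phi'_N\subseteq\Phi_N$ — is a $\bar\Ga(2)$-conjugate of a power of one of $A^N$, $B^N$, $(A^{-1}B)^N$. It therefore suffices to check $A^N,B^N,(A^{-1}B)^N\in\Phi''_N$, which is exactly the assertion that $H'_{\Z/N\Z}=\bar\Ga(2)/\Phi''_N$ has exponent $N$ (the proposition established just above); the conjugates then stay inside $\Phi''_N$ by normality. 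This yields that $X''_N\to X_N$, $X'_N\to X_N$ and $X''_N\to X'_N$ are all unramified.

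The proof is mostly bookkeeping once the results of the previous section are granted, and I do not expect a genuine obstacle. The one step that really needs those results is $(A^{-1}B)^N\in\Phi''_N$ at the cusp $1$: unlike $A^N$ and $B^N$ — for which one may simply quote that $\bar\psi$ vanishes — the element $(A^{-1}B)^N$ does not lie in $\bar\Ga(2)_2$, so $\phi_2$ and $\psi$ cannot be evaluated on it directly, and it is precisely here that the exponent-$N$ proposition is indispensable. The other point requiring a little care is the surjectivity onto the $(\Z/N''\Z)^2$-factor, which must use the explicit $\bar\psi$-images of $[C,A]$ and $[C,B]$; the remaining ingredients — normality, the identification of the Galois groups with the three quotients, and the reduction of ramification to the cusps — are routine.
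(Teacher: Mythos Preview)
Your proof is correct and follows essentially the same route as the paper: both reduce unramifiedness to the cusps, then to checking that $A$, $B$ and $A^{-1}B$ have order $N$ in $\bar\Ga(2)/\Phi''_N$ (equivalently $A^N,B^N,(A^{-1}B)^N\in\Phi''_N$), invoking the exponent-$N$ proposition for the cusp above $1$, and both read off the Galois groups from the group theory of $\Phi_N/\Phi'_N$, $\Phi'_N/\Phi''_N$, $\Phi_N/\Phi''_N$. Your write-up is simply more explicit about normality and the surjectivity onto $(\Z/N''\Z)^2\times(\Z/N'\Z)$, which the paper compresses into ``follow immediately from the properties of the groups''.
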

\begin{proof}
The first statement needs only to be established for the morphism $X''_N\rightarrow X_N$. The ramification points can only reside at the cusps. To show that those cusps are unramified, we need only to show that their width in $X''_N$ is equal to their width, equal to $N$, in $X_N$. Since the covering $X''_N\rightarrow X_1$ is Galois, the width of a cusp of $X_N''$ depends only on its image in the set of cusps $\{0,1,\infty\}$ of $X_1$.
We just need to look at the order of $A$, $B$ and $A^{-1}B$ in ${\bar\Ga}(2)/\Phi''_N$. All three are of order $N$ in the latter quotient.

The other assertions follow immediately from the properties of the groups $\Phi_N$, $\Phi_N'$ and $\Phi_N''$.

\end{proof}
The Riemann surface $X_N$ is isomorphic to the Riemann surface obtained from the complex points of the Fermat curve. The covering $X_N'\rightarrow X_N$ is obtained from what we call {\it Heisenberg covering of the Fermat curve} by passing to the complex numbers.

We obtain the genus $g_N$ and $g_N'$ of the curves $X_N$ and $X_N'$ by our formulas for the genus of $X_{N,M,L}$. One has $g_N=g_{N,N,1}=(N-1)(N-2)/2$. Furthermore, if $N$ is odd, one has $g'_{N}=g_{N,N,N}=(N^{3}-3N^{2}+2)/2=(N-1)(N^2-2N-2)/2$. If $N$ is even, one gets $g'_{N}=g_{N,N,N'}=(2N^3-5N^2+4)/4=(N-2)(2N^2-N-2)/4$.

The genus of $g_N''$ of the curve $X_N''$ can be deduced from the Riemann-Hurwitz formula. Since we have a covering $X''_N\rightarrow X'_N$ of degree $N''^2$, one has 
$$
g''_N=N''^2g'_N-N^2+1.
$$

The curve $X'_{N}$ possesses $NN'$ cusps above each of the cusps $0$, $1$ and $\infty$ of $X(2)$. 

\subsection{Some cases of small genus}

\begin{prop}
The genus $g$ of the curve $X_{M, N,L}$ is equal to $0$ for the following values of $(N,M,L)$, and only for those values : $(N,1,1)$, $(1,M,1)$, $(2,2,1)$, $(2,2,2)$.
\end{prop}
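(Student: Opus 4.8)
The plan is to feed the explicit genus formula of the preceding proposition into an elementary finiteness argument, and then to settle the finitely many surviving candidates by direct substitution. Write $e$ for the ramification index above the cusp $1$ of $X(2)$, so that $e=T$ or $e=2T$ according to the parities of $T$ and $T/L$, and in all cases $e\ge T\ge\max(M,N)$. The Riemann--Hurwitz computation carried out in the proof of the preceding proposition gives $2g_{M,N,L}-2=MNL-NL-ML-MNL/e$, so the condition $g_{M,N,L}=0$ is equivalent to the Diophantine equation
\[
MNL-NL-ML-\frac{MNL}{e}+2=0.
\]
The forward implication I would dispatch first: when $M=1$ (any $N$) or $N=1$ (any $M$) one necessarily has $L=1$ and $e=\max(M,N)$, and the left-hand side collapses to $0$; for $(M,N,L)=(2,2,1)$ one has $e=2$ and for $(M,N,L)=(2,2,2)$ one has $e=4$, and in both cases the left-hand side is again $0$. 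These are precisely the triples listed (in the notation $(N,M,L)$ of the statement).

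For the converse, suppose $g_{M,N,L}=0$. The genus formula being symmetric under $M\leftrightarrow N$, I may assume $M\le N$; and if $M=1$ we are already in the list, so assume $M\ge 2$, hence $N\ge 2$ and $T\ge 2$. The key estimate is that $e\ge T$ forces
\[
\frac{MNL}{e}\le\frac{MNL}{T}=L\cdot\gcd(M,N),
\]
using $\operatorname{lcm}(M,N)\cdot\gcd(M,N)=MN$. Substituting $MNL-NL-ML+2=MNL/e$ into this inequality and rearranging gives
\[
L\bigl((M-1)(N-1)-\gcd(M,N)\bigr)\le -2,
\]
so in particular $(M-1)(N-1)\le\gcd(M,N)\le M$. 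Since $M\ge 2$ this yields $N-1\le M/(M-1)\le 2$, hence $N\le 3$; the case $N=3$ I would exclude by noting that then $M\in\{2,3\}$ and $(M-1)(N-1)>\gcd(M,3)$ in both cases, a contradiction. Therefore $N=2$, and then $M=2$.

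It remains to treat $M=N=2$, where $\gcd(M,N)=2$ forces $L\in\{1,2\}$; the two resulting triples $(2,2,1)$ and $(2,2,2)$ have already been checked to give genus $0$ in the forward step, which closes the argument. I do not expect any serious obstacle here: the proof is essentially bookkeeping built on the genus formula. The only points needing a little care are the choice to bound $MNL/e$ using $e=T$ rather than the possibly larger $2T$ (this is the direction that makes the estimate an obstruction to $g=0$), the symmetry reduction to $M\le N$, and keeping track of which admissible values of $L$ actually occur once $M=N=2$.
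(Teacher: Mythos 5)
Your argument is correct and reaches the same list, but it is organized differently from the paper's. The paper starts from the same Riemann--Hurwitz identity, written as $2-2g=-L(MN-N-M-MN/e)$, and first deduces $L\in\{1,2\}$ by divisibility; it then splits into cases according to $L$ and $\gcd(M,N)$ and solves the resulting exact equations $(M-1)(N-1)=0$ or $1$. You instead replace the exact equation by the one-sided estimate $MNL/e\le MNL/T=L\gcd(M,N)$, which yields $(M-1)(N-1)\le\gcd(M,N)\le\min(M,N)$ and hence $M,N\le 3$ after the symmetry reduction; the admissible values of $L$ then come for free from $L\mid\gcd(M,N)$ once $M=N=2$. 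Your route has the advantage of treating the two possible values of $e$ uniformly (the inequality $e\ge T$ is all you need), whereas the paper must keep track of exactly when $e=2T$ occurs; the paper's route, in exchange, produces exact Diophantine equations rather than inequalities. One small algebra slip to fix: from $MNL-NL-ML+2\le L\gcd(M,N)$ and $MN-N-M=(M-1)(N-1)-1$, the correct display is $L\bigl((M-1)(N-1)-1-\gcd(M,N)\bigr)\le-2$, not $L\bigl((M-1)(N-1)-\gcd(M,N)\bigr)\le-2$. The consequence you actually use, namely $(M-1)(N-1)\le\gcd(M,N)$, still follows from the corrected inequality (the left-hand side is negative, so the integer in the bracket is at most $-1$), so nothing downstream of that line is affected.
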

\begin{proof}
The formula just established gives : $2-2g=-L(MN-N-M-MN/e)$. Thus $g=0$ implies that $L=1$ or $2$. 

If $g=0$ and $L=1$, then one has $MN-N-M-MN/e=-2$ and $e={\rm lcm}(M,N)$. Thus ${\rm gcd}(M,N)$ divides $2$. If  ${\rm gcd}(M,N)=1$, then one has $MN-N-M-1=-2$, and thus $(M-1)(N-1)=0$ that is $M=1$ or $N=1$.
If ${\rm gcd}(M,N)=2$, one has $MN-N-M-2=-2$ and thus $(M-1)(N-1)=1$; therefore one has $(M,N)=(2,2)$.

If $g=0$ and $L=2$, then one has $MN-N-M-MN/e=-1$. If $4$ divides neither $M$ nor $N$, one has $e=2{\rm lcm}(M,N)$. Then ${\rm gcd}(M,N)$ divides $2$, and is equal to $2$. Then one has $MN-N-M-1=-2$, as above. As $L=2$, the cases $M=1$ and $N=1$ are excluded; thus one has $(M,N)=(2,2)$.

\end{proof}
\begin{prop}
The genus $g$ of the curve $X_{M, N,L}$ is equal to $1$ for the following values of $(N,M,L)$, and only for those values (up to permutation of $N$ and $M$) : $(3,2,1)$, $(4,2,1)$, $(4,2,2)$, $(3,3,1)$, $(3,3,3)$.

The Jacobian varieties of those curves are elliptic curves endowed with automorphisms of order $3$, $4$, $4$, $3$ and $3$ respectively. Consequently the $j$-invariants of those curves are $0$, $1728$, $1728$, $0$ and $0$ respectively.
\end{prop}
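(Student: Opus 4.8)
The plan is to treat the two assertions separately: first the genus-$1$ classification, which is pure bookkeeping with the genus formula just established, and then the determination of the Jacobians, which reduces to exhibiting an automorphism of the right order fixing a cusp.

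For the classification I would start from the relation extracted in the previous proof, namely $2-2g=-L(MN-N-M-MN/e)$, where $T={\rm lcm}(M,N)$ and $e=T$ when $T$ is odd or $T/L$ is even, and $e=2T$ otherwise. Since $L\ge 1$, the condition $g=1$ is equivalent to $MN-M-N-MN/e=0$. Writing $d=\gcd(M,N)$, so that $MN/T=d$, this becomes $(M-1)(N-1)=d+1$ in the case $e=T$, and $(M-1)(N-1)=d/2+1$ (which in particular forces $2\mid d$) in the case $e=2T$. Because $d\le\min(M,N)$ one has $(M-1)(N-1)\ge(d-1)^2$, which forces $d\le 3$ in the first case and makes the second case collapse. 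Writing $M=ad$, $N=bd$ with $\gcd(a,b)=1$ and running through the finitely many remaining small $(a,b,d)$, one is left with only the candidate unordered pairs $\{M,N\}=\{2,3\}$, $\{2,4\}$, $\{3,3\}$; going through the admissible $L$ (those dividing $d$) and checking, in each instance, which branch of the case distinction one is in, reproduces exactly the five tuples $(3,2,1)$, $(4,2,1)$, $(4,2,2)$, $(3,3,1)$, $(3,3,3)$. The case $e=2T$ yields nothing, since its only numerical candidate $\{M,N\}=\{2,3\}$ violates $2\mid d$.

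For the statement about Jacobians I would use the elementary fact that a complex elliptic curve admitting an automorphism of order $3$ (resp. $4$) fixing a point is one with $j=0$ (resp. $j=1728$): taking the fixed point as origin, such an automorphism lies in $\mathrm{Aut}_0(E)$, which is $\Z/2\Z$, $\Z/4\Z$ or $\Z/6\Z$ according as $j\notin\{0,1728\}$, $j=1728$, or $j=0$. It therefore suffices, for each of the five curves (all of genus $1$, hence each isomorphic to its own Jacobian once a cusp is chosen as origin), to produce such an automorphism. Here I would invoke that $X_{M,N,L}\to X(2)$ is Galois with group $H_{M,N,L}={\bar\Ga}(2)/\Gamma_{M,N,L}$, and that, since $\Gamma_{M,N,L}$ is normal with the stabilizers of the cusps $0$, $\infty$, $-1$ of ${\bar\Ga}(2)$ generated by $B$, $A$, $A^{-1}B$, the decomposition group of the cusp of $X_{M,N,L}$ above $0$ (resp. $\infty$, resp. $1$) attached to the trivial coset is the cyclic subgroup $\langle\bar B\rangle$ (resp. $\langle\bar A\rangle$, resp. $\langle\overline{A^{-1}B}\rangle$) of $H_{M,N,L}$, of order the ramification index computed in the genus proof. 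These three orders are $\{2,3,6\}$ for $(3,2,1)$, $\{2,4,4\}$ for $(4,2,1)$ and $(4,2,2)$, and $\{3,3,3\}$ for $(3,3,1)$ and $(3,3,3)$; picking in each case an element of order $3$, $4$, $4$, $3$, $3$ respectively — which fixes a cusp, hence is not a translation and (being of order $\ge 3$) is not $-1$ either — gives the desired automorphism of the Jacobian and yields $j=0$, $1728$, $1728$, $0$, $0$.

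The only slightly delicate point is the Diophantine enumeration: one must carry along, simultaneously, the two constraints coming from the case split in the genus formula (the parity of $T$ and of $T/L$) and the divisibility $L\mid\gcd(M,N)$, so as to neither miss nor over-count a tuple. The one genuine idea needed is the last fact about $\mathrm{Aut}_0$ of a complex elliptic curve; everything else is mechanical. As a sanity check one can also write the curves down concretely — for instance $(3,2,1)$ is the smooth model of $y^{6}=x^{2}(x-1)$ and $(3,3,1)$ is the Fermat cubic $X^{3}+Y^{3}=Z^{3}$ — and read the automorphisms directly off those equations.
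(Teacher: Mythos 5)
Your proposal is correct and follows essentially the same route as the paper: both reduce $g=1$ to the Diophantine condition $MN-M-N-MN/e=0$ coming from the genus formula and then settle the $j$-invariants by exhibiting an automorphism of order $3$ or $4$ in $H_{M,N,L}$ fixing a cusp. Your bookkeeping via $d=\gcd(M,N)$ and the bound $(M-1)(N-1)\ge(d-1)^2$ is a slightly tidier organization of the same finite case check that the paper carries out by fixing $N\in\{2,3\}$, and your $\mathrm{Aut}_0$ argument makes explicit what the paper leaves to one sentence.
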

\begin{proof}
Consider again the formula : $2-2g=-L(MN-N-M-MN/e)$. Thus $g=1$ amounts to $MN-N-M-MN/e=0$. One has $MN-N-M-MN/e=(N-2)(M-1)-2+M(1-N/e)$. We can suppose that $N>1$ and $M>1$ (otherwise $g=0$).

If $N=2$, one gets $M(1-2/e)=2$. If $L=1$, then $e={\rm lcm}(M,2)$. Thus $M-1=2$ or $M-2=2$. One has $(N,M,L)=(2,3,1)$ or $(N,M,L)=(2,4,1)$.

If $N=2$ and $L=2$, then  $e=2M$  or $e=M$. If $e=M$, then $M-2=2$. If $e=2M$, then $M-1=2$ (absurd since $L|M$). One has $(N,M,L)=(2,4,2)$ or $(N,M,L)=(2,4,1)$.

If $N=3$, then one has $M-3+M(1-3/e)=0$ and $e={\rm lcm}(M,3)$. One can suppose that $M>2$. Thus one has $M=3$ and $L=1$ or $L=3$. One has $(N,M,L)=(3,3,1)$ or $(N,M,L)=(3,3,3)$.

The case where $M=2$ or $M=3$ are treated similarly. If $N>3$ and $M>3$, one has $(N-2)(M-1)-2+M(1-N/e)>0$, which precludes $g=1$.

The automorphisms comme from the action of the image of $A$ in $H_{N,M,L}$ which stabilizes a cusp and therefore is an automorphism of an elliptic curve. 

\end{proof}

We can derive some information on the Manin-Drinfeld principle in the genus $1$ cases.

\begin{prop}
Divisors of the form $(CP)-(P)$, where $P$ is any point (in particular a cusp) of $X_{3,3,3}$ (resp. $X_{4,2,2}$) and $C$ acts via the map $\bar\Gamma(2)\rightarrow H_{N,M,L}$, are of order dividing $3$ (resp. $2$). 
\end{prop}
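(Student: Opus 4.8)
The plan is to transport the question to the geometry of the genus one curve $X:=X_{3,3,3}$ (resp.\ $X_{4,2,2}$), exploiting that $C$ acts through the image of the central element $z=[x,y]$ of $H_{N,M,L}$. Recall that $z$ has order exactly $L$ in $H_{N,M,L}$, that is $L=3$ for $(N,M,L)=(3,3,3)$ and $L=2$ for $(N,M,L)=(4,2,2)$, and that the covering $X_{M,N,L}\to X(2)$ is Galois with group $H_{N,M,L}$. Hence $C$ induces an automorphism $c$ of $X$ with $c^{L}=\mathrm{id}$, and the divisor $(CP)-(P)$ is $(cP)-(P)$.

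The first and crucial step is to check that $c$ has no fixed point. Because $\Gamma(2)$ has no elliptic element, $\bar\Gamma(2)$ is torsion free, so a non-cuspidal point of $X$ has trivial stabilizer in $H_{N,M,L}$ and cannot be fixed by $c$; thus only the cusps are at issue. The stabilizer of a cusp in $H_{N,M,L}$ is a conjugate of one of the cyclic subgroups $\langle x\rangle$, $\langle y\rangle$, $\langle x^{-1}y\rangle$ (the images of the stabilizers $\langle A\rangle$, $\langle B\rangle$, $\langle A^{-1}B\rangle$ of the cusps $\infty$, $0$, $-1$), and since $z$ is central it belongs to a conjugate of such a subgroup only if it belongs to the subgroup itself. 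Now $\langle x\rangle$ and $\langle y\rangle$ map injectively into the abelianization of $H_{N,M,L}$, which annihilates $z\neq 1$; and from the identity $(x^{-1}y)^{k}=x^{-k}z^{-k(k-1)/2}y^{k}$ one sees that a power of $x^{-1}y$ lies in $\langle z\rangle$ only if $k$ is a multiple of $\mathrm{lcm}(M,N)$, in which case a short congruence (using $L\mid k$ for $(3,3,3)$, resp.\ $2\mid k(k-1)/2$ for $(4,2,2)$) forces that power to be trivial. So $z$, hence $c$, fixes no point of $X$.

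I would then use the structure of automorphisms of a genus one curve. Fix a point $O\in X$ and regard $X$ as an elliptic curve with origin $O$; every automorphism of $X$ then has the form $\tau\circ\psi$ with $\tau$ a translation and $\psi\in\mathrm{Aut}(X,O)$, and if $\psi\neq\mathrm{id}$ the map $\mathrm{id}-\psi$ is a nonzero isogeny, hence surjective, so $\tau\circ\psi$ has a fixed point. Since $c$ has none, $c$ is translation by some $t\in X$, and $c^{L}=\mathrm{id}$ gives $Lt=O$. Finally, for any $P\in X$ one has $cP=P+t$, so on the elliptic curve $X$ the relation $(P+t)+(O)\sim(P)+(t)$ yields $(cP)-(P)\sim(t)-(O)$; under $\mathrm{Pic}^{0}(X)\simeq X$ the class of the latter is $t$, which is of order dividing $L=3$ (resp.\ $2$). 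This holds for every point $P$, in particular every cusp.

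The main obstacle is precisely establishing the freeness in the first step, because this is what makes the statement true: if $z$ did fix a point $Q$ of $X$, then with $Q$ as origin $c$ would be multiplication by a primitive $L$-th root of unity $\zeta\in\mathrm{End}(X)$, the class of $(cP)-(P)$ would equal $(\zeta-1)P$, and since $\zeta-1$ is a nonzero endomorphism this is non-torsion whenever $P$ is non-torsion, contradicting the desired conclusion. So the heart of the argument is the short group-theoretic verification that $z$ avoids every cusp stabilizer for $(N,M,L)=(3,3,3)$ and $(4,2,2)$; everything else is formal.
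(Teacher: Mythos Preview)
Your proof is correct, but it follows a different route from the paper's. The paper argues via the quotient: since both $X_{3,3,3}$ and $X_{3,3,1}$ (resp.\ $X_{4,2,2}$ and $X_{4,2,1}$) have genus $1$, the canonical degree-$L$ map between them induces, by Albanese functoriality, an isogeny of degree $L$ between the Jacobians; as $C$ acts trivially on the target, $(CP)-(P)$ lies in the kernel of that isogeny, which has order $L$. You instead stay on $X_{N,M,L}$, verify that the central element $z$ acts without fixed points, and deduce that $c$ is a translation by some $t$ of order dividing $L$, so that every $(cP)-(P)$ has class $t$. Both arguments ultimately rest on the same fact---the $\langle z\rangle$-action is free (equivalently, the quotient map is unramified)---but the paper obtains this implicitly from Riemann--Hurwitz and the coincidence of genera, whereas you check it by hand on cusp stabilizers. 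Your approach yields the pleasant extra information that the class of $(CP)-(P)$ is \emph{independent of $P$}; the paper's is a one-liner once the genus table is in place. One small remark: your abelianization argument already handles $\langle x^{-1}y\rangle$ without the explicit formula for $(x^{-1}y)^k$, since in both cases the order of $x^{-1}y$ in $H_{N,M,L}$ equals its order in the abelianization, so $\langle x^{-1}y\rangle$ injects there and cannot contain the nontrivial $z$.
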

\begin{proof}

The canonical morphism $X_{3,3,3}\rightarrow X_{3,3,1}$ is of degree $3$. It gives rise to an isogeny of degree $3$ on the Jacobians by Albanese functioriality. Thus the kernel of this isogeny is of order $3$. Moreover any divisor of the form $(CP)-(P)$ is in the kernel of the isogeny.

\end{proof} 

\subsection{The curve $X'(2)$}
Recall that the group $\bar\Gamma'(2)$ is the subgroup of index $3$ of $\bar\Gamma(2)$ that is the inverse image of the $2$-Sylow subgroup of ${\rm PSL}_2(\Z/3\Z)$. Denote by $X'(2)=X_{\Gamma'(2)}$ the corresponding modular curve. 

\begin{prop}
The curve $X'(2)$ is of genus $1$ and its $j$-invariant is $0$.
\end{prop}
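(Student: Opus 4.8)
The plan is to study the degree-$3$ covering $X'(2)\to X(2)$, feed its ramification into the Riemann--Hurwitz formula to get the genus, and then use the deck group to pin down the $j$-invariant.

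First I would check that $X'(2)\to X(2)$ is Galois, cyclic of degree $3$. The reduction map $\bar\Gamma(2)\to{\rm PSL}_2(\Z/3\Z)$ is surjective (the weak-approximation fact recalled above), and $D_3$ is normal in ${\rm PSL}_2(\Z/3\Z)$ with quotient cyclic of order $3$: indeed $D_3$ is the derived subgroup of ${\rm PSL}_2(\Z/3\Z)$ (this is the reduction modulo $3$ of the characterisation of the derived subgroup of ${\rm PSL}_2(\Z)$ recalled above), and since $|{\rm PSL}_2(\Z/3\Z)|=12$ and $|D_3|=4$ the abelian quotient ${\rm PSL}_2(\Z/3\Z)/D_3$ has order $3$. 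Hence $\bar\Gamma'(2)$, the preimage of $D_3$, is normal in $\bar\Gamma(2)$ and $\bar\Gamma(2)/\bar\Gamma'(2)\cong\Z/3\Z$.

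Next I would compute the ramification of $X'(2)\to X(2)$. The curve $X(2)$ has genus $0$ and three cusps $0,1,\infty$, and since $\Gamma(2)$ — hence $\bar\Gamma'(2)$ — acts freely on the upper half-plane, all ramification lies over these three cusps. The ramification index over a cusp is the order in $\bar\Gamma(2)/\bar\Gamma'(2)$ of the parabolic generator of its stabiliser, namely $A$ over $\infty$, $B$ over $0$, and $A^{-1}B$ over $-1$ (as used in the genus computation earlier). Each of these reduces modulo $3$ to a nontrivial parabolic element of ${\rm PSL}_2(\Z/3\Z)$, hence to an element of order $3$; such an element does not lie in $D_3$ (which has exponent $2$), so it maps to a generator of $\bar\Gamma(2)/\bar\Gamma'(2)\cong\Z/3\Z$. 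Thus every cusp of $X(2)$ is totally ramified, $X'(2)$ has exactly one cusp over each of $0,1,\infty$, and the Riemann--Hurwitz formula gives
$$
2g(X'(2))-2=3(2\cdot 0-2)+3(3-1)=0,
$$
so $g(X'(2))=1$.

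Finally, to identify the $j$-invariant I would exploit that the three cusps of $X'(2)$ are exactly the fixed points of the deck group ${\rm Gal}(X'(2)/X(2))\cong\Z/3\Z$ — this is precisely the total ramification established above. Choosing the cusp over $\infty$ as the origin $O$ makes $X'(2)$ an elliptic curve, and the generator $\sigma$ of the deck group fixes $O$, so $\sigma$ is a genuine automorphism of the elliptic curve $(X'(2),O)$ of order exactly $3$. Since an elliptic curve over $\C$ admitting an automorphism of order $3$ fixing the origin has automorphism group $\Z/6\Z$, hence complex multiplication by $\Z[\zeta_3]$ and $j$-invariant $0$, we conclude $j(X'(2))=0$. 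The group theory and the Riemann--Hurwitz count are routine; the only step that requires a little care is the last one, where the key observation is that \emph{total} ramification at the cusps makes them $\sigma$-fixed, so that $\sigma$ is an automorphism of the elliptic curve rather than a translation-twisted one — and an order-$3$ such automorphism forces $j=0$.
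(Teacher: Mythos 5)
Your proof is correct and follows essentially the same route as the paper: total ramification of the degree-$3$ cover $X'(2)\to X(2)$ at the three cusps (because $A$, $B$, $A^{-1}B$ map to order-$3$ elements outside $D_3$), Riemann--Hurwitz giving $g=1$, and the order-$3$ deck transformation fixing a cusp forcing $j=0$. The extra care you take in verifying normality of $D_3$ and in noting that total ramification makes the deck transformation an automorphism of the pointed elliptic curve (not a translation-twisted one) only makes explicit what the paper leaves implicit.
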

\begin{proof}
Consider the morphism of degree $d=3$ : $X'(2)\rightarrow X(2)$. Since none of the matrices $A$ (generator of the stabilizer of the cusp $\infty$),  $B$ (generator of the stabilizer of the cusp $0$), and $AB^{-1}$ (generator of the stabilizer of the cusp $1$) are not in $\bar\Gamma(2)$, the morphism is totally ramified at each of the three cusps of $X(2)$, and ramified only over those points. The Riemann-Hurwitz formula expresses the genus $g$ of $X'(2)$ as $(2g-2)=-2d+\sum_P(e_P-1)=6-6=0$ (where $P$ runs through points of ramification and $e_P$ designates the ramification index at that point), hence $g=1$.

The curve $X'(2)$ has an automorphism (the class of $A$ in $\bar\Gamma(2)/\bar\Gamma'(2)$) of order $3$ which leaves fixed the cusp $\infty$. Since it is of genus $1$, it is an elliptic curve of with an automorphism of order $3$. It has necessarily $j$-invariant $0$. 
\end{proof} 

\begin{remark}
Since $\Gamma_{3,3,3}=\Phi'_3$ is a subgroup of index $3$ of $\Gamma'(2)$. One has a morphism $X_{3,3,3}\rightarrow X'(2)$ of degree $3$. Both curves involved are of genus $1$, so we have an isogeny of degree $3$.
Note that $\bar\Gamma'(2)$ is a congruence subgroup of level $12$ and a subgroup of index $3$ of the derived subgroup $\tilde\Gamma$ of ${\rm PSL}_2(\Z)$. 
The latter subgroup defined a modular curve $X_{\tilde\Gamma}$ of genus $1$, which happens to have $j$-invariant $0$.
Thus we get an isogeny $X'(2)\rightarrow X_{\tilde\Gamma}$ of degree $3$.
\end{remark}

We now prove theorem \ref{theoremcomparison}.

\begin{proof}
The correspondence is obtained by composing pushing to $X_{\Gamma. \Phi'_d}$ and pulling back to $X_\Gamma$.
By Proposition \ref{closureHeisenberg}, one has $\Gamma. \Phi'_d=\bar\Gamma(2)$ except if $3$ divides $N$ and $\Gamma$ is contained in $\bar\Gamma'(2)$. 
In the latter case, $\theta_{N,\Gamma}$ factorizes through the jacobian of $X(2)$, which is $0$. 
Otherwise, namely if $3$ divides $N$ and $\Gamma$ is contained in $\bar\Gamma'(2)$, $\theta_{N,\Gamma}$ factorizes through the surjective map $J'_N\rightarrow J'(2)$. Since $J'(2)$ is the jacobian of a curve of genus $1$, and $j$-invariant $0$, it is an elliptic curve of $j$-invariant $0$.
Moreover the map $J'(2)\rightarrow J_\Gamma$ has finite kernel. The result follows.
\end{proof}

It is well known that the modular curve $X_0(27)$ has $j$-invariant $0$, and that the Fermat curve is a model for $X_0(27)$. We might expect a connection between $X_0(27)$ and $X'_3$. Let $\Gamma=\bar\Gamma(2)\cap \Gamma_0(27)$.  It is a congruence subgroup. But it is not contained in $\Gamma'(2)$. Thus, if we apply theorem \ref{theoremcomparison} to the group $\Gamma$, we obtain, counterintuitively,  the $0$-morphism $J'_3\rightarrow J_{\Gamma}$. {\it A fortiori}, if we push forward $J_\Gamma\rightarrow J_0(27)$ we obtain $0$.

\subsection{Mixed homology groups}

In \cite{MR3951413}, the homology group of $X_N$ relative to the whole set of cusps is thoroughly studied, by the method of Manin ~\cite{MR0314846}.
We found fruitful in \cite{BanerjeeMerel} to consider the following slightly different point of view:  for $\Ga$ a subgroup of finite index of $\bar\Ga(2)$, the corresponding modular curve $X_\Ga$ covers $X(2)$, which admits three cusps: $\Ga(2)0$, $\Ga(2)1$ and $\Ga(2)\infty$. Let $\partial_\Ga^+$ (resp. $\partial_\Ga^-$) be the set of cusps above $\Ga(2)0\cup\Ga(2)\infty$ (resp. $\Ga(2)1$)). 
It is thus possible to consider the mixed homology group $\HH_1(X_{\Ga}-\partial_{\Ga}^{-}, \partial_{\Ga}^{+}; {\Z})$ (and its dual $\HH_1(X_{\Ga}-\partial_{\Ga}^{+}, \partial_{\Ga}^{-}; {\Z})$).
One gets a group isomorphism 
\[
\xi_{\Ga}^{+} : \Z[\Ga\backslash\bar\Ga(2)]\rightarrow \HH_1(X_{\Ga}-\partial_{\Ga}^{-}, \partial_{\Ga}^{+}; {\Z})
\]
 which, for $g\in\bar \Gamma(2)$, associates to $\Gamma g$ the class $\xi_\Ga(g)$ in $\HH_1(X_{\Ga}-\partial_{\Ga}^{-}, \partial_{\Ga}^{+}; {\Z})$ of a path from $g0$ to $g\infty$ in the upper half-plane. 

Consider now the case where $\Ga=\Phi'_N$.
To simplify notations, set $\partial^{+}=\partial^+$ and $\partial^{-}=\partial^-$.
Recall that we have a group isomorphism $\Phi_N'\backslash \Gamma(2)\rightarrow H_{N,N,N'}$ which, for $(a,b,c)\in({\Z})^{3}$,  to $\Phi'_NA^aC^cB^b$ associates $x^az^cy^b$. We get thus a group isomorphism
\[
\Z[ H_{N,N,N'}]\simeq  \HH_1(X'_N-\partial^-,\partial^+;\Z).
\]
 For $(a,b,c)\in({\Z}/N{\Z})^{2}\times(\Z/N'\Z)$, set $i(a,b,c)=\xi^{+}_{\Phi'_N}(\Phi'_{N}A^{a}C^{c}B^{b})$. 

Thus, $H_{N,N,N'}$ acts on the curve $X'_{N}$, and transitively on the sets of cusps of $X'_{N}$ above $0$, $1$ and $\infty$ respectively. Thus the stabilizer of a cusp is cyclic of order $N$. 

The long exact sequence of relative homology yields: 
\[
0 \rightarrow \HH_1(X'_N-\partial^-; \Z) \rightarrow \HH_1(X'_N-\partial^-,\partial^+;\Z) \xrightarrow{\delta_N} \Z[\partial^+]^0 \rightarrow 0
\]
where $\delta_N$ is the boundary map.
Similarly we have a dual exact sequence: 
\[
0 \rightarrow \Z[\partial^-]^0\xrightarrow{\delta_N^*}\HH_1(X'_N-\partial^-,\partial^+;\Z)  \rightarrow \HH_1(X'_N,\partial^+;\Z)  \rightarrow 0
\]
where $\delta_N^*$ is the dual boundary map. It induces
\[
0 \rightarrow \Z[\partial^-]^0\xrightarrow{\delta_N^*}\HH_1(X'_N-\partial^-;\Z)  \rightarrow \HH_1(X'_N;\Z)  \rightarrow 0.
\]
Hence $\HH_1(X'_N;\Z) $ can be described as a subquotient of the group $\HH_1(X'_N-\partial^-,\partial^+;\Z)$. We will make this explicit by spelling out the maps $\delta_N$ and $\delta_N^*$. 

The sets of cusps of $X'_N$ lying above $\infty$, $0$ and $1$ are respectively $\Phi_N'\backslash \Ga(2)/A^{\Z}$, $\Phi_N' \backslash \Ga(2)/B^{\Z}$ and $\Phi_N' \backslash \Ga(2)/(AB^{-1})^{\Z} $. All three sets can be understood as follows.

\begin{prop}
\label{bijections}
We have three bijective maps as follows:
\[
\Phi_N'\backslash \Ga(2)/A^{\Z} \rightarrow (\Z/N\Z)\times (\Z/N'\Z)
\]
given by $x^{a} z^cy^{b}\mapsto (b,c+ab)$,
\[
\Phi_N' \backslash \Ga(2)/B^{\Z} \rightarrow (\Z/N\Z)\times (\Z/N'\Z)
\]
given by $x^a z^c y^{b}\mapsto (a,c)$, and
\[
\Phi_N' \backslash \Ga(2)/(AB^{-1})^{\Z} \rightarrow (\Z/N\Z)\times (\Z/N'\Z)
\]
given by $x^a z^c y^{b}\mapsto (a+b,c-b(b+1)/2)$.
\end{prop}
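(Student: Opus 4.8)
The plan is to carry the whole computation over to the finite group $H_{N,N,N'}$ by means of the isomorphism $\Phi'_N\backslash\Gamma(2)\xrightarrow{\ \sim\ }H_{N,N,N'}$ recalled above, under which the classes of $A$, $B$, $C$ go to $x$, $y$, $z$. Since the sets of cusps of $X'_N$ lying above $\infty$, $0$ and $1$ are the double cosets $\Phi'_N\backslash\Gamma(2)/A^{\Z}$, $\Phi'_N\backslash\Gamma(2)/B^{\Z}$ and $\Phi'_N\backslash\Gamma(2)/(AB^{-1})^{\Z}$, after transport these become the orbit spaces of the right-translation actions of the cyclic subgroups $\langle x\rangle$, $\langle y\rangle$ and $\langle xy^{-1}\rangle$ on $H_{N,N,N'}$. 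So it is enough to describe these three orbit partitions explicitly.

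First I would compute the three actions directly from the group law of $H_{N,N,N'}$, writing a general element as $x^{a}z^{c}y^{b}$ with $a,b\in\Z/N\Z$, $c\in\Z/N'\Z$, and using the power formula $(\alpha\beta)^{n}=\alpha^{n}[\alpha,\beta]^{n(n-1)/2}\beta^{n}$ with $\alpha=x$, $\beta=y^{-1}$ to expand $(xy^{-1})^{k}$ as $x^{k}$ times a power of $z$ times $y^{-k}$. Right multiplication by $x^{k}$, $y^{k}$, $(xy^{-1})^{k}$ then carries $x^{a}z^{c}y^{b}$ to an element of the form $x^{a+k}z^{c+\ast}y^{b}$, $x^{a}z^{c}y^{b+k}$, $x^{a+k}z^{c+\ast}y^{b-k}$ respectively, where the displacement $\ast$ of the central coordinate is a polynomial in $k$: linear in the first case, zero in the second, linear plus a $\binom{k}{2}$-term in the third. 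In particular each of the three cyclic subgroups has order $N$ and acts freely on $H_{N,N,N'}$, so every orbit has exactly $N$ elements and the number of orbits is $|H_{N,N,N'}|/N=NN'=|(\Z/N\Z)\times(\Z/N'\Z)|$.

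Next, I would verify that each of the three formulas in the statement is constant on the corresponding orbit (a one-line substitution using the expansions above) and that it is well defined, i.e.\ that the second coordinate lands in $\Z/N'\Z$ even though $a$ and $b$ are only known modulo $N$; this holds because $N'\mid N$, so replacing $a$ or $b$ by $a+N$ changes $ab$ (resp.\ $b(b+1)/2$) by a multiple of $N$, using, as in the construction of $\bar\psi$, that $N'$ divides $N(N-1)/2$ and hence also $N(N+1)/2$. It then remains to see that the induced map is a bijection; the cleanest way is to write down the inverse, sending $(u,v)$ to the class of the normalised monomial $x^{0}z^{v}y^{u}$ in the first case, $x^{u}z^{v}y^{0}$ in the second, and the appropriate normalised monomial in the third, and to check from the orbit description that it is a two-sided inverse; alternatively, the two sets being finite of the same cardinality $NN'$, constancy on orbits together with surjectivity already suffices.

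The genuinely delicate case is the third bijection, the one attached to the cusp $1$: there right multiplication by $xy^{-1}$ shears all three coordinates at once and feeds the quadratic term $\binom{k}{2}$ back into the centre, so the invariant $c-b(b+1)/2$ is only meaningful modulo $N'$ (not modulo $N$), and both identifying this invariant and keeping its sign straight require a little care. This is the group-theoretic reflection of the fact, used in computing the genus, that the cusps of $X'_N$ above $1$ all have width $N$. The first two bijections, by contrast, are an immediate unwinding of the group law of $H_{N,N,N'}$.
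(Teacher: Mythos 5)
Your proposal is correct and follows essentially the same route as the paper: transport the double cosets to $H_{N,N,N'}$, compute the right-multiplication action explicitly (with the quadratic $z$-shift in the $(AB^{-1})^{\Z}$ case being the only delicate point), check the stated functions are constant on orbits, and conclude by comparing cardinalities. The only cosmetic difference is that you obtain the orbit count $NN'$ from the freeness of right multiplication by an order-$N$ cyclic subgroup (orbits are left cosets), whereas the paper invokes the previously computed number of cusps above $1$ together with surjectivity via the normalisation $b=0$; both rest on the same fact that $xy^{-1}$ has order $N$ in $H_{N,N,N'}$.
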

\begin{proof}
The first two identifications are straightforward. We establish the third one. 
Let $k$ be an integer. One finds by induction on $k$, $x^a z^c y^{b}(xy^{-1})^{k}=x^{a+k}z^{c-kb+k(k-1)/2}y^{b-k}$. 
Since $(a+k)+(b-k)=a+b$ and 
\[
c-kb+k(k-1)/2-(b-k)(b-k+1)/2=c-b(b+1)/2,
\] 
the map passes indeed to the quotient $\Phi_N' \backslash \Ga(2)/(AB^{-1})^{\Z}$.
It is surjective (take $b=0$). Since there are $NN'$ cusps above $1$, it is bijective. 

\end{proof}

Denote by $j_\infty(b,c)$ the cusp $\Phi_N'A^{a}C^{c-ab}B^bA^{N\Z}$, for any $a\in \Z$, by $j_0(a,c)$ the cusp $\Phi_N'A^{a}C^{c}B^{b}B^{N\Z}$, for any $b\in \Z$, and $j_{1}(d,c)$ the cusp 
$\Phi_N'A^{a}C^{c-b(b+1)/2}B^{b}(AB^{-1})^{N\Z}$, for any $a$, $b\in \Z$ such that $a+b=d$. With these conventions we can express $\delta_{N}$.

\begin{prop}
\label{boundary}
Let $a$, $b$, $c\in\Z$. 
One has $\delta_N(i(a,b,c))=j_\infty(b,c-ab)-j_0(a,c)$.
\end{prop}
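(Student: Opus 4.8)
The plan is to unwind the definitions of $i(a,b,c)$ and of the connecting homomorphism $\delta_N$, reducing the statement to one reordering computation in the Heisenberg group $H_{N,N,N'}$ together with the descriptions of the cusps $j_0,j_\infty$ recalled just above. By construction $i(a,b,c)=\xi^{+}_{\Phi'_N}(\Phi'_N g)$ with $g=A^aC^cB^b\in\bar\Ga(2)$, and $\xi^{+}_{\Phi'_N}(\Phi'_N g)$ is the class in $\HH_1(X'_N-\partial^-,\partial^+;\Z)$ of (the image in $X'_N$ of) a path in the upper half-plane joining the cusp $g\cdot 0$ to the cusp $g\cdot\infty$. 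Both endpoints lie above $\bar\Ga(2)\cdot 0$, resp.\ $\bar\Ga(2)\cdot\infty$, hence in $\partial^+$, so this path is a relative cycle for the pair $(X'_N-\partial^-,\partial^+)$; since $\delta_N$ is the boundary map of the long exact sequence, it sends the class of this path to its boundary, namely $[g\infty]-[g0]\in\Z[\partial^+]^0$. It therefore suffices to identify the two cusps $[g0]$ and $[g\infty]$ of $X'_N$.

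For $[g0]$: the stabiliser of $0$ in $\bar\Ga(2)$ is $\langle B\rangle$, so $g\cdot 0$ is the double coset $\Phi'_N g\langle B\rangle=\Phi'_N A^aC^cB^b\langle B\rangle=\Phi'_N A^aC^c\langle B\rangle$ (the trailing $B^b$ is absorbed), which is $j_0(a,c)$ by its definition; equivalently, under $\Phi'_N\backslash\bar\Ga(2)\simeq H_{N,N,N'}$ the class $\overline g=x^az^cy^b$ is sent by the second bijection of Proposition \ref{bijections} to $(a,c)$. For $[g\infty]$: the stabiliser of $\infty$ is $\langle A\rangle$, so $g\cdot\infty$ is the double coset $\Phi'_N g\langle A\rangle$, i.e.\ in $H_{N,N,N'}$ the right $\langle x\rangle$-coset of $\overline g=x^az^cy^b$. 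Using the centrality of $z$ and the relation $y^bx^a=z^{ab}x^ay^b$ in $H_{N,N,N'}$ (a special case of the multiplication law recalled above) one computes
\[
x^az^cy^b=z^{c}x^ay^b=z^{c-ab}\,y^bx^a ,
\]
so $\overline g$ and $z^{c-ab}y^b=\overline{C^{c-ab}B^b}$ generate the same right $\langle x\rangle$-coset; equivalently $\Phi'_N g\langle A\rangle=\Phi'_N C^{c-ab}B^b\langle A\rangle$, which is $j_\infty(b,c-ab)$ by its definition (take the free exponent there equal to $0$). Substituting these two identifications into $\delta_N(i(a,b,c))=[g\infty]-[g0]$ yields the claimed formula.

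The only point requiring any care is the reordering in the computation of $[g\infty]$: pushing the factor $x^a$ to the right past $y^b$ produces the central correction $z^{-ab}$ (equivalently $[x^a,y^b]=z^{-ab}$), and it is exactly this correction that replaces the ``naive'' label $(b,c)$ of the coset of $\overline g$ by the correct label $(b,c-ab)$ of the cusp $g\cdot\infty$. Everything else is the formal exactness of the long exact sequence of the pair $(X'_N-\partial^-,\partial^+)$ and the very definition of $\xi^{+}$.
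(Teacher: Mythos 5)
Your proof is correct and takes essentially the same route as the paper's: the boundary of the relative class $\xi^{+}_{\Phi'_N}(\Phi'_N A^aC^cB^b)$ is $[\Phi'_N A^aC^cB^bA^{\Z}]-[\Phi'_N A^aC^cB^bB^{\Z}]$, and these double cosets are then identified with $j_\infty$ and $j_0$. The only difference is that you make explicit the Heisenberg reordering $x^az^cy^b=z^{c-ab}y^bx^a$ behind the cusp labels, which the paper dismisses as an immediate translation.
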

\begin{proof}
The boundary of the modular symbol $\{A^{a}C^{c}B^{b}0, A^{a}C^{c}B^{b}\infty\}$ is $[\Phi'_{N}A^{a}C^{c}B^{b}A^{\Z}]-[\Phi'_{N}A^{a}C^{c}B^{b}B^{\Z}]$, which translates immediately into the claimed formula.
\end{proof}

We use \cite[Proposition 5]{BanerjeeMerel} to determine $\delta_N^*$. 

\begin{prop}
\label{boundarydual}
One has, for $d\in\Z/N\Z$ and $c\in\Z/N'\Z$,
\[
\delta_N^*(j_{1}(d,c))=\sum_{a,b\in\Z/N\Z, a+b=d}i(a,b+1,c-b(b+1)/2)-i(a,b,c-b(b+1)/2).
\]
\end{prop}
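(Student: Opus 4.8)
The plan is to obtain the formula by specializing the general description of the dual boundary map from \cite[Proposition 5]{BanerjeeMerel} to the group $\Ga=\Phi'_N$, and then translating every term through the identification $\Phi'_N\backslash\Ga(2)\simeq H_{N,N,N'}$ together with the bijections of Proposition \ref{bijections}. Recall that, for a general finite-index subgroup $\Ga$ of $\bar\Ga(2)$, that reference expresses $\delta_\Ga^*([s])$, for $s$ a cusp of $X_\Ga$ above the cusp $1$ of $X(2)$, as a sum over the fiber $F_s$ of $s$ along $\Ga\backslash\bar\Ga(2)\to\Ga\backslash\bar\Ga(2)/(AB^{-1})^{\Z}$ of elementary loop classes, each expressible in the form $\xi^{+}_\Ga(hB)-\xi^{+}_\Ga(h)$ (up to the sign and orientation conventions of that reference); geometrically, a small loop around $s$, once cut along the relative $1$-cycles $\{h0,h\infty\}$, breaks up into exactly these pieces as $h$ runs over the width-$N$ orbit lying over $s$.

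First I would make the ingredients explicit in the case at hand. Under the isomorphism $\Phi'_N\backslash\Ga(2)\simeq H_{N,N,N'}$ one has $\xi^{+}_{\Phi'_N}(\Phi'_NA^aC^cB^b)=i(a,b,c)$ by definition, and right multiplication by $B$ corresponds to $x^az^cy^b\mapsto x^az^cy^{b+1}$, i.e. $i(a,b,c)\mapsto i(a,b+1,c)$, with no change to the central coordinate. Next, by the third bijection of Proposition \ref{bijections}, the fiber $F_{j_1(d,c)}$ consists precisely of the cosets $\Phi'_NA^{d-b}C^{c-b(b+1)/2}B^b$, one for each $b\in\Z/N\Z$; equivalently, this is the $(AB^{-1})$-orbit traced out by the identity $x^az^cy^b(xy^{-1})^k=x^{a+k}z^{c-kb+k(k-1)/2}y^{b-k}$ proved there.

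Putting these together, the general formula gives
\[
\delta_N^*(j_1(d,c))=\sum_{b\in\Z/N\Z}\Big(\xi^{+}_{\Phi'_N}(\Phi'_NA^{d-b}C^{c-b(b+1)/2}B^{b+1})-\xi^{+}_{\Phi'_N}(\Phi'_NA^{d-b}C^{c-b(b+1)/2}B^{b})\Big),
\]
which, after reindexing by $a=d-b$ and rewriting each term via $\xi^{+}_{\Phi'_N}(\Phi'_NA^aC^{c'}B^{b'})=i(a,b',c')$, is exactly the claimed identity.

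The only genuine difficulty is the bookkeeping of the central ($C$-exponent) coordinate. One must check carefully that the representative $A^{d-b}C^{c-b(b+1)/2}B^b$ really lies in the fiber over $j_1(d,c)$ for every $b$ (this is the content of Proposition \ref{bijections}, where the quadratic shift $b(b+1)/2$ reflects the non-centrality of the extension ${\bar\Ga}(2)_1/{\bar\Ga}(2)_4\to{\bar\Ga}(2)_1/{\bar\Ga}(2)_2$), that right multiplication by $B$ does not perturb this coordinate, and — as a consistency check — that the resulting expression is independent of any residual freedom in the choice of $(a,b)$ with $a+b=d$ and is annihilated by $\delta_N$ of Proposition \ref{boundary}, reflecting $\delta_N\circ\delta_N^*=0$. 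I expect this quadratic-term accounting, rather than any conceptual point, to be the main obstacle; everything else is a direct substitution.
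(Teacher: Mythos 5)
Your proof is correct and follows essentially the same route as the paper: both specialize \cite[Proposition 5]{BanerjeeMerel} (with width $w_1=N$ at the cusps above $1$), identify the fiber over $j_1(d,c)$ via the third bijection of Proposition \ref{bijections}, and translate through the definition of $i$; you merely make explicit the elementary form $\xi^+_{\Phi'_N}(hB)-\xi^+_{\Phi'_N}(h)$ of each summand and the bookkeeping of the central coordinate, which the paper leaves implicit.
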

\begin{proof}
We just need to translate the third statement  \cite[Proposition 5]{BanerjeeMerel}. With the notations of that proposition, we have $w_{1}=N$. 
It remains to use the third bijection of proposition \ref{bijections} and the definition of $i$.

\end{proof}
Let $S_{N}$ be the subgroup of $\Z[(\Z/N\Z)^{2}\times\Z/N'\Z]$ formed by the elements of the form 
$\sum_{a,b,c}\lambda_{a,b,c}[a,b,c]$, satisfying, for every $(a,c)\in(\Z/N\Z)\times (\Z/N'\Z)$, the relation
$\sum_{b\in\Z/N\Z}\lambda_{a,b,c}=0$
and, for every $(b,c)\in(\Z/N\Z)\times (\Z/N'\Z)$, the relation $\sum_{a\in\Z/N\Z}\lambda_{a,b,c+ab}=0$.
By Proposition \ref{boundary}, its image by $i$ has boundary $0$. 

Let $R_{N}$ be the subgroup of $\Z[(\Z/N\Z)^{2}\times\Z/N'\Z]$ spanned by elements of the form 
\[
e_{c,d}=\sum_{a,b\in\Z/N\Z, a+b=d}[a,b+1,c-b(b+1)/2)]-[a,b,c-b(b+1)/2],
\]
for $(d,c)\in(\Z/N\Z)\times (\Z/N'\Z)$. By Proposition \ref{boundarydual}, it is a subgroup of $S_{N}$.
Thus we get a presentation  by generators and relations of the homology of $X'_{N}$.

\begin{cor}
The map $i$ produces an exact sequence
\[
0\rightarrow R_{N}\rightarrow S_{N}\rightarrow\HH_1(X'_N;\Z) \rightarrow 0.
\]

\end{cor}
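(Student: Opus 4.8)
The strategy is to obtain the presentation by splicing together the two short exact sequences already recorded, after transporting each of them through the isomorphism $i\colon\Z[(\Z/N\Z)^{2}\times\Z/N'\Z]\xrightarrow{\sim}\HH_1(X'_N-\partial^-,\partial^+;\Z)$. The first sequence identifies $\HH_1(X'_N-\partial^-;\Z)$ with $\ker\delta_N$, and the induced dual sequence identifies $\HH_1(X'_N;\Z)$ with the quotient of $\HH_1(X'_N-\partial^-;\Z)$ by the image of $\delta_N^{*}$. So the proof reduces to the two identifications $i^{-1}(\ker\delta_N)=S_N$ and $i^{-1}(\operatorname{im}\delta_N^{*})=R_N$, after which the corollary is immediate.

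For the first identification I would compute $\delta_N\circ i$ using Proposition \ref{boundary}, namely $\delta_N(i(a,b,c))=j_\infty(b,c-ab)-j_0(a,c)$. By Proposition \ref{bijections} the cusps above $\infty$ (resp.\ above $0$) are indexed by $(\Z/N\Z)\times(\Z/N'\Z)$ so that $j_\infty(b,c)$ (resp.\ $j_0(a,c)$) is the cusp with index $(b,c)$ (resp.\ $(a,c)$). Writing a general element as $\sum_{a,b,c}\lambda_{a,b,c}[a,b,c]$, the coefficient of the cusp with index $(b,c)$ above $\infty$ in its image is $\sum_a\lambda_{a,b,c+ab}$, and that of the cusp with index $(a,c)$ above $0$ is $-\sum_b\lambda_{a,b,c}$; these all vanish exactly when the two families of relations defining $S_N$ hold. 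Hence $i$ restricts to an isomorphism $S_N\xrightarrow{\sim}\ker\delta_N=\HH_1(X'_N-\partial^-;\Z)$.

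For the second identification I would use Proposition \ref{boundarydual}: each cusp $j_1(d,c)$ above $1$ satisfies $\delta_N^{*}(j_1(d,c))=i(e_{c,d})$, and by the third bijection of Proposition \ref{bijections} the $j_1(d,c)$ run over all the cusps above $1$. Therefore the subgroup of $\HH_1(X'_N-\partial^-;\Z)$ one must quotient by to obtain $\HH_1(X'_N;\Z)$ is generated by the $i(e_{c,d})$, i.e.\ equals $i(R_N)$; making this bookkeeping consistent uses the relation $\sum_{d,c}e_{c,d}=0$, a short telescoping computation in which, for fixed $c$, summing over $d$ merges $\sum_{a+b=d}$ into $\sum_{(a,b)}$, and then re-indexing $c\mapsto c-b(b+1)/2$ and $b\mapsto b+1$ makes the two halves of $\sum_{d,c}e_{c,d}$ cancel. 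Combining the two identifications with the induced dual sequence yields $\HH_1(X'_N;\Z)\cong S_N/R_N$, i.e.\ the exactness of $0\to R_N\to S_N\to\HH_1(X'_N;\Z)\to 0$.

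The real work is the bookkeeping in these two identifications: one has to carry the three distinct coset parametrizations of Proposition \ref{bijections} (for cusps over $0$, $1$, $\infty$) through the explicit boundary formulas of Propositions \ref{boundary} and \ref{boundarydual} and match them against the defining data of $S_N$ and $R_N$. The one genuinely delicate point is checking that $R_N$ is the image of $\delta_N^{*}$ exactly — neither too small nor merely of finite index in it — which is precisely what the degree relation $\sum_{d,c}e_{c,d}=0$ controls; I expect this to be the main obstacle.
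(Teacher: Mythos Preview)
Your proposal is correct and follows exactly the approach implicit in the paper: the corollary is stated there without proof, as an immediate consequence of the two exact sequences and the explicit formulas of Propositions \ref{boundary} and \ref{boundarydual}, and you have simply spelled out the bookkeeping. Your observation that the relation $\sum_{d,c}e_{c,d}=0$ is what reconciles the span of all $e_{c,d}$ with the image of $\delta_N^{*}$ restricted to $\Z[\partial^-]^0$ is a genuine detail the paper passes over in silence, and your telescoping verification of it is correct.
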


\begin{remark}
By exchanging the roles of $\partial^{+}$ and $\partial^{-}$, it is possible to give a dual presentation of $\HH_1(X'_N;\Z) $. We leave this to the reader.
\end{remark}

\
\section{The Heisenberg covering and its models}
\label{Fermat}
In this section, we assume $N$ to be odd. Therefore $N'=N$.
\subsection{Modular functions}
Let $z\in\tH$. For $q_2=\exp{(\pi i z)}$, consider the classical  $\lambda$-function \cite{MR983619}:
\[
\lambda(z)=16 q_2 \prod_{n \geq 1} \left( \frac{1+q_2^{2n}}{1+q_2^{2n-1}}\right)^8,
1-\lambda(z)=\prod_{n \geq 1} \left( \frac{1-q_2^{2n-1}}{1+q_2^{2n-1}}\right)^8. 
\]
From the above expression, it is clear that $\lambda(1)=1$ and $(1-\lambda(1))=0$. The $N$-th roots
\[
x:=\sqrt[N]{\lambda}, y:=\sqrt[N]{1-\lambda}
\]
define modular units for $\Phi_{N}$. We recover thus the familiar model of the Fermat curve.

Since the $\lambda$ function identifies $\sP^1-\{0,1,\infty\}$ to $Y(2)$,
a covering of $\sP^1-\{0,1,\infty\}$ can be understood as a covering of $Y(2)$, {\it i.e.} a modular curve.

\subsection{Reminder on Fermat curves}
The  $N$-th Fermat curve $F_{N}$ is given by the projective model:
\[
X^N+Y^N=Z^N.
\]
Fermat curves and their points at infinity (cusps) are studied extensively by Rohrlich \cite{MR2626317}, \cite{MR0441978}, V\'elu \cite{MR582434} and  Posingies~\cite{Posingies}. 
In particular, these authors consider the map
\[
\beta_N:F_N \rightarrow \sP^1
\]
given by $(X:Y:Z) \rightarrow (X^N:Z^N)$. The map $\beta_N$ is of degree $N^{2}$. It is ramified only above the points $0,1,\infty$. 
The corresponding ramification points are given by $a_j=(0:\zeta^j:1)$, $b_j=(\zeta^j:0:1)$, $c_j=(\epsilon\zeta^j:1:0)$, for $j\in\Z/N\Z$.

Recall that $\zeta$ is a primitive $N$-th root of unity and $\epsilon$ is a square root of $\zeta$. Each of the 
above points has ramification index $N$ over $\sP^1$. For all $j\in\Z/N\Z$, the cusps $a_j$, $b_j$, $c_j$ are all defined over the cyclotomic field $\Q(\mu_N)$. Among them, only $a_0$, $b_0$ and $c_0$ are defined over $\Q$.

\subsection{The cuspidal subgroup of the Fermat curve}

The divisors of following modular functions 
are given by:
\[
div(x-\zeta^j)=N b_j-\sum_j c_j, 
div(y-\zeta^j)=N a_j-\sum_j c_j, 
div(x-\epsilon\xi^j y)=N c_j-\sum_j c_j. 
\]

Rohrlich  \cite{MR0441978} has determined the structure of the cuspidal group of the Jacobian of $F_{N}$ (see also V\'elu's alternative proof and description \cite{MR582434} ). 
Since every cuspidal divisor on $F_{N}$ is annihilated by $N$ in the Jacobian, the cuspidal group is a quotient of $\Z/N\Z[\partial_{\Phi_{N}}]^{0}$. The additional relations are given as follows. Recall that $N$ is odd. 

\begin{thm}[Rohrlich ~\cite{MR0441978}]
The group $\mathcal{P}$ of principal divisors is spanned by the following set 
$$
\{\sum_{i=0}^{N-1}[a_{i}]-[P],\sum_{i=0}^{N-1}[b_{i}]-[P],\sum_{i=0}^{N-1}[c_{i}]-[P],
$$
$$
\sum_{i=0}^{N-1}i([a_{i}]-[b_{i}]),\sum_{i=0}^{N-1}i([a_{i}]-[c_{i}]),\sum_{i=0}^{N-1}i^{2}([a_{i}]+[b_{i}]+[c_{i}]-3[P])\},
$$
where $P$ is any cusp of $F_{N}$. Thus the cuspidal subgroup of the Jacobian of $F_N$ is a free $\Z/N\Z$-module of rank $3N-7$. 

\end{thm}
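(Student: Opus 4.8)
The strategy is to pin down the cuspidal subgroup as the cokernel of the divisor map restricted to an explicit space of ``candidate'' principal divisors, and then verify that the relations exhibited generate everything. First I would recall from the previous subsection the divisors of the three families of modular units on $F_N$: $\mathrm{div}(x-\zeta^j)=N[b_j]-\sum_k[c_k]$, $\mathrm{div}(y-\zeta^j)=N[a_j]-\sum_k[c_k]$, $\mathrm{div}(x-\epsilon\zeta^j y)=N[c_j]-\sum_k[c_k]$. Since every cuspidal divisor is $N$-torsion (this was already noted, because $\beta_N$ is totally ramified of degree $N^2$ over three points, so each $N[a_j]$, etc., is linearly equivalent to the pullback of a point of $\mathbb{P}^1$ and the differences of those pullbacks are principal), the cuspidal subgroup is a quotient of $(\Z/N\Z)[\partial_{\Phi_N}]^0$, a free $\Z/N\Z$-module of rank $3N-1$. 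The task is thus to compute the submodule $\mathcal{P}\bmod N$ of principal cuspidal divisors inside this, and show it has rank $3N-1-(3N-7)=6$, generated by the six listed elements.

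Next I would show the six displayed divisors are indeed principal. The three divisors $\sum_i[a_i]-[P]$, $\sum_i[b_i]-[P]$, $\sum_i[c_i]-[P]$ (for $P$ any cusp, and differences of such being principal) come from the fibres of $\beta_N$: $\sum_i[a_i]$, $\sum_i[b_i]$, $\sum_i[c_i]$ are the three ramification divisors of $\beta_N$ divided by $N$, hence their pairwise differences are principal (pull back $\mathrm{div}$ of a function on $\mathbb{P}^1$ with zeros/poles at $0,1,\infty$). The divisor $\sum_i i([a_i]-[b_i])$ arises from the function $y/x$ up to scaling: more precisely, $\prod_j (y-\zeta^j Z)^{?}/\prod_j(x-\zeta^j Z)^{?}$ — I would take an appropriate $\Z$-linear combination $\prod_j (y-\zeta^j)^{\alpha_j}$ whose divisor, using $\mathrm{div}(y-\zeta^j)=N[a_j]-\sum[c_k]$, telescopes to $N\sum i([a_i]-[b_i])$ after also using the $x$-functions; the exponent bookkeeping is the routine part. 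Similarly $\sum_i i([a_i]-[c_i])$ and the quadratic relation $\sum_i i^2([a_i]+[b_i]+[c_i]-3[P])$ come from taking products $\prod_j(y-\zeta^j)^{j}$, etc., and combining — the quadratic one is where the $i^2$ appears, from $\sum_{j} j\cdot(\text{div})$ being $N\sum j[a_j] - (\sum j)\sum[c_k]$ and iterating. All of this is a finite linear-algebra computation over $\Z$.

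The real content — and the main obstacle — is the reverse inclusion: showing these six relations (together with the $N$-torsion) generate \emph{all} principal cuspidal divisors, equivalently that the cuspidal subgroup has rank exactly $3N-7$ and not larger. For this I would argue that any cuspidal principal divisor is the divisor of a modular unit for $\Phi_N$, and the group of modular units (modulo constants) on the Fermat curve is generated by the functions $x-\zeta^j$, $y-\zeta^j$, $x-\epsilon\zeta^j y$ — this is itself a theorem (computing units on $X(2)$-covers, or via the Siegel units / the structure of the cuspidal divisor class group worked out by Rohrlich and Kubert–Lang), and I would cite it rather than reprove it. Granting that generation statement, the divisor map on the unit group is given by an explicit integer matrix (the ``circulant-type'' matrix with blocks coming from $\mathrm{div}(x-\zeta^j)$ etc.), and computing its image amounts to finding the relations among the columns — these relations are exactly: the $N$-divisibility (giving the $(\Z/N\Z)$-module structure) plus the six displayed combinations, and one checks the quotient has the stated rank by diagonalizing, e.g. via the discrete Fourier transform over $\Z/N\Z$ which turns the circulant blocks into diagonal ones and makes the rank count transparent. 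The subtle point, and where care is needed, is that the relations are not literally over $\Z$ but the ``extra'' relations ($\sum i[a_i]-\ldots$ being principal, not just $N$-torsion-trivial) reflect genuine functions, so one must make sure the list of six is complete — the Fourier/eigenspace computation, organized by characters of $\Z/N\Z$, isolates exactly which characters contribute a relation and confirms the count $3N-7$. I would present that eigenspace decomposition as the heart of the argument and refer to Rohrlich \cite{MR0441978} and V\'elu \cite{MR582434} for the generation of the unit group and the detailed verification.
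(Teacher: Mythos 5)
The paper itself gives no proof of this statement: it is quoted verbatim from Rohrlich \cite{MR0441978} (with V\'elu \cite{MR582434} cited for an alternative treatment), so there is no internal argument to compare yours against. Judged on its own terms, your sketch reproduces the correct overall architecture of Rohrlich's proof (reduce to $(\Z/N\Z)[\partial_{\Phi_N}]^0$ using $N$-torsion, exhibit functions for the listed relations, then bound the unit group and diagonalize by characters of $(\Z/N\Z)^2$), but it contains a genuine gap at the central point.

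The gap is this: every monomial $\prod_j(x-\zeta^j)^{\alpha_j}\prod_j(y-\zeta^j)^{\beta_j}\prod_j(x-\epsilon\zeta^jy)^{\gamma_j}$ has divisor $N\sum_j\beta_j[a_j]+N\sum_j\alpha_j[b_j]+N\sum_j\gamma_j[c_j]-(\sum_j\alpha_j+\beta_j+\gamma_j)\sum_k[c_k]$, whose coefficients at every $a_j$ and $b_j$ are divisible by $N$. Hence no amount of ``exponent bookkeeping'' with these functions can produce the divisors $\sum_i i([a_i]-[b_i])$, $\sum_i i([a_i]-[c_i])$ or $\sum_i i^2([a_i]+[b_i]+[c_i]-3[P])$, whose coefficients are not multiples of $N$; what you can produce is only $N$ times these divisors (e.g.\ $\prod_i\bigl((y-\zeta^i)/(x-\zeta^i)\bigr)^{\{i\}}$ has divisor $N(D_A-D_B)$). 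Showing that $D_A-D_B$ itself is principal amounts to showing that this quotient is an $N$-th power in the function field of $F_N$, i.e.\ that a certain degree-$N$ cyclic unramified cover is trivial; that is the substantive content of Rohrlich's theorem and cannot be waved through as routine linear algebra. For the same reason, the ``fact'' you propose to cite for the reverse inclusion --- that the modular units are generated by $x-\zeta^j$, $y-\zeta^j$, $x-\epsilon\zeta^j y$ --- is false: if it were true, the image of the divisor map modulo $N$ would be the rank-one module generated by $\sum_k[c_k]$ and the cuspidal group would have rank $3N-2$, contradicting the rank $3N-7$ you are trying to prove. Rohrlich's actual unit group contains the additional $N$-th roots, and identifying exactly which extra roots exist (no more than the three families corresponding to the degree-$1$ and degree-$2$ relations) is the heart of his argument. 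Your character/Fourier decomposition is the right organizing tool for that step, but as written your plan both fails to construct the three nontrivial relations and rests on a generation statement that contradicts the conclusion. (A minor further slip: $N[a_j]$ has degree $N$ while $\beta_N^*(0)=N\sum_j[a_j]$ has degree $N^2$, so your justification of $N$-torsion should instead use $\mathrm{div}\bigl((y-\zeta^j)/(y-\zeta^k)\bigr)=N([a_j]-[a_k])$ and its analogues.)
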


We set 
\[
D_{A}=\sum_{i\in\Z/N\Z}\{i\}[a_{i}],
\]
(resp. $D_{B}=\sum_{i\in\Z/N\Z}\{i\}[b_{i}]$, resp. $D_{C}=\sum_{i\in\Z/N\Z}\{i\}[c_{i}]$)
and
\[
f_{A}=\prod_{i\in\Z/N\Z}(-y+\zeta^{i})^{\{i\}}.
\]
\begin{cor}
The class of $D_A$ is of order $N$ in the cuspidal subgroup of the Jacobian of $F_N$. Moreover it is congruent to $D_B$ and $D_C$ modulo $\mathcal{P}$ .
\end{cor}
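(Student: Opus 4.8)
The plan is to treat in turn three assertions: that the class of $D_A$ in the cuspidal subgroup is killed by $N$, that $D_A\equiv D_B\equiv D_C\pmod{\mathcal P}$, and that this class is killed by no proper divisor of $N$; only the last carries real content. Throughout, write $\mathcal C=\Z/N\Z[\partial_{\Phi_N}]^{0}/\overline{\mathcal P}$ for the cuspidal subgroup, with $\overline{\mathcal P}$ the image of $\mathcal P$; by Rohrlich's theorem $\mathcal C$ is free of rank $3N-7$ over $\Z/N\Z$, namely $\Z/N\Z[\partial_{\Phi_N}]^{0}$ modulo the reductions $R_1,\dots,R_6$ of Rohrlich's six generators, which I take with base cusp $a_0$ so that they are genuinely of degree $0$. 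Note also that Rohrlich's theorem gives $N\Z[\partial_{\Phi_N}]^{0}\subseteq\mathcal P$.

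For the first assertion I would compute ${\rm div}(f_A)$: since $-y+\zeta^{i}=-(y-\zeta^{i})$ and ${\rm div}(y-\zeta^{i})=N[a_i]-\sum_{k}[c_k]$, one finds ${\rm div}(f_A)=\sum_{i}\{i\}\bigl(N[a_i]-\sum_{k}[c_k]\bigr)=ND_A-\bigl(\sum_{i}\{i\}\bigr)\sum_{k}[c_k]$, which is $N\bigl(D_A-\tfrac{N-1}{2}\sum_{k}[c_k]\bigr)$ when $\{i\}\in\{0,\dots,N-1\}$ (and just $ND_A$ for the balanced representative). As the listed divisor of $x-\epsilon\zeta^{0}y$ is $N[c_0]-\sum_{k}[c_k]$, the divisor $\sum_{k}[c_k]-N[c_0]$ is principal, so $\sum_{k}[c_k]$ vanishes in $\mathcal C$; hence the class of $D_A$ equals that of $\tfrac1N{\rm div}(f_A)$ and is annihilated by $N$ (which is anyway immediate from Rohrlich). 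The congruences come straight from Rohrlich's presentation: $D_A-D_B=\sum_{i}\{i\}([a_i]-[b_i])$ differs from the generator $\sum_{i}i([a_i]-[b_i])\in\mathcal P$ by an element of $N\Z[\partial_{\Phi_N}]^{0}\subseteq\mathcal P$, so $D_A\equiv D_B\pmod{\mathcal P}$, and likewise $D_A\equiv D_C$ via $\sum_{i}i([a_i]-[c_i])\in\mathcal P$.

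The heart of the matter is that the order is exactly $N$. Since $\mathcal C$ is free over $\Z/N\Z$, it suffices to show that for each prime $p\mid N$ (so $p\ge3$, as $N$ is odd) the image of $D_A$ in $\mathcal C/p\mathcal C=\F_p[\partial_{\Phi_N}]^{0}/\langle R_1,\dots,R_6\rangle$ is nonzero. Suppose $D_A\equiv\sum_{m}\alpha_m R_m$ over $\F_p$. Comparing the coefficients of $[b_j]$ on the two sides gives, for every $j\in\F_p$, the relation $\alpha_2-\alpha_4 j+\alpha_6 j^{2}\equiv0$; a polynomial of degree $\le2$ over $\F_p$ with $p\ge3$ vanishing identically is $0$, so $\alpha_2=\alpha_4=\alpha_6=0$. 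Comparing the coefficients of $[c_j]$ then gives $\alpha_3-\alpha_5 j\equiv0$ for all $j$, hence $\alpha_3=\alpha_5=0$, leaving $D_A\equiv\alpha_1\sum_{i}[a_i]$, i.e. the coefficient $\{i\}\equiv i$ of $[a_i]$ equals $\alpha_1$ for every $i$ — impossible, as seen from $i=0$ and $i=1$. Thus the class of $D_A$ is nonzero in $\mathcal C/p\mathcal C$ for every $p\mid N$, so it has order $N$.

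I expect the only real obstacle to be this last step, and within it the bookkeeping of the base-point terms attached to $[a_0]$ in $R_1$, $R_3$, $R_6$: one must check that modulo $p$ (using $p\mid N$ to kill the coefficients $N$ and $3\sum_i i^{2}$ sitting in front of $[a_0]$) the coefficients of $[b_j]$ and of $[c_j]$ in the $R_m$ are exactly the linear and quadratic expressions in $j$ used above. This is a routine verification, done generator by generator.
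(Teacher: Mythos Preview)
Your proof is correct and follows the same line as the paper's: compute ${\rm div}(f_A)=ND_A$ (with the balanced representative, as the paper uses), and read the congruences $D_A\equiv D_B\equiv D_C$ directly off Rohrlich's generators $\sum_i i([a_i]-[b_i])$ and $\sum_i i([a_i]-[c_i])$. The paper simply cites Rohrlich's theorem for the exact order $N$ without further argument, whereas you supply the missing verification via the mod-$p$ coefficient computation; that step is a genuine and correct unpacking of what the paper treats as a black box, and your bookkeeping of the $[a_0]$-contributions (using $p\mid N$ to kill the terms $-N$ and $-3\sum_i i^2$) is accurate.
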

\begin{proof}
Indeed the divisor of $f_A$ is $ND_A$. By Rohrlich's theorem, $D_A$ is of order $N$. The congruence of $D_A$, $D_B$ and $D_C$ modulo $\mathcal{P}$ follows from Rohrlich's relations.
\end{proof}

\subsection{The covering as a function field extension}
By Rohrlich's theorem, the order of $D_{A}$ in the cuspidal group is $N$, therefore an $N$-th root of $f_{A}$ defines a cyclic covering $G\rightarrow F_{N}$ (a provisional notation, since $G$ will be shown to be equal to $F_N'$) of degree $N$. Such a morphism is indeed unramified, since the divisor of $f_{A}$  belongs to $N\Z[\partial_{\Phi_{N}}]^{0}$. Since $D_{A}-D_{B}$ is a principal divisor, exchanging the roles of $X$ and $Y$ would give the same covering. A similar reasoning with respect to $D_{C}$ holds. 
The cyclic covering $G\rightarrow F_{N}$ translates into a covering of Riemann surfaces $W\rightarrow X_{N}$.

Consider now the third term $[\Ga(2),[\Ga(2),\Ga(2)]]$ in the lower central series of $\Ga(2)$. It is not a subgroup of finite index of $\Ga(2)$, but one can still consider the Riemann surface $X_{[\Ga(2),[\Ga(2),\Ga(2)]]}$.

\begin{prop}
The covering $W\rightarrow X_{N}$ factorizes through $X_{[\Ga(2),[\Ga(2),\Ga(2)]]}$.
\end{prop}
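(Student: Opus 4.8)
The plan is to recast the statement as a group-theoretic inclusion. The covering $W\to X_N$ is, by the discussion just given, unramified; since $\mathbf{C}$ contains the $N$-th roots of unity it is a Kummer covering $\mathbf{C}(F_N)(f_A^{1/N})/\mathbf{C}(F_N)$, cyclic of degree exactly $N$ because the class of $D_A$ has order $N$ in the Jacobian of $F_N$ (Rohrlich). Restricted to $Y_N$ it corresponds to the kernel $H$ of a surjective character $\chi_A\colon\Phi_N\twoheadrightarrow\mathbf{Z}/N\mathbf{Z}$, the monodromy character of $f_A^{1/N}$: on $\tH$ the function $f_A$ is $\Phi_N$-invariant, holomorphic and nowhere zero, so $\log f_A$ is defined up to an additive constant, the function $f_A^{1/N}:=\exp(\tfrac1N\log f_A)$ satisfies $f_A^{1/N}(\gamma z)=\zeta^{\chi_A(\gamma)}f_A^{1/N}(z)$ for $\gamma\in\Phi_N$, and the restriction of $W$ over $Y_N$ is $(\ker\chi_A)\backslash\tH$. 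Since $[\bar\Gamma(2),[\bar\Gamma(2),\bar\Gamma(2)]]=\bar\Gamma(2)_3$ equals $\ker(\bar\Gamma(2)\to H_{\mathbf{Z}})$ and is contained in $\Phi_N$, the Proposition amounts to the inclusion $\bar\Gamma(2)_3\subseteq H$, i.e. to the vanishing of $\chi_A$ on $\bar\Gamma(2)_3$.

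The decisive step is to show that $[D_A]$ is fixed by the deck group $G=\bar\Gamma(2)/\Phi_N\simeq(\mathbf{Z}/N\mathbf{Z})^2$ of $F_N\to X(2)$. A deck automorphism acts on the coordinates $(x,y)$ of $F_N$ by $(x,y)\mapsto(\zeta^px,\zeta^qy)$, hence sends $a_i=(0:\zeta^i:1)$ to $a_{i+q}$; so for $g\in G$ one has $g_*D_A=\sum_i\{i\}[a_{i+t}]=\sum_j\{j-t\}[a_j]$ for some $t\in\mathbf{Z}/N\mathbf{Z}$, whence $g_*D_A-D_A=-t\bigl(\sum_j[a_j]-N[a_0]\bigr)+N\bigl(E-t[a_0]\bigr)$ with $E=\sum_{\{j\}<t}[a_j]$. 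The first summand is principal by Rohrlich's relation $\sum_j[a_j]\equiv N[a_0]$, and the second is $N$ times a cuspidal divisor of degree $0$, hence trivial in $J(F_N)$ since the cuspidal group is annihilated by $N$. Thus $[g_*D_A]=[D_A]$ for all $g\in G$ (the possible $c_j$-part of $D_A$ only contributes full $G$-orbit sums, which are $G$-fixed, so it does not affect this).

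It follows that $\chi_A$ is $G$-invariant. For $g\in\bar\Gamma(2)$ the function $z\mapsto f_A(gz)$ is again $\Phi_N$-invariant, i.e. the pull-back of $f_A$ by the deck automorphism $\bar g$; its divisor is $N$ times a divisor with the same class in $J(F_N)$ as $D_A$, so it equals $c\,h^N f_A$ for some $c\in\mathbf{C}^*$ and some $h\in\mathbf{C}(F_N)^*$, and comparing monodromy (using that $h$ is $\Phi_N$-invariant) yields $\chi_A(g\gamma g^{-1})=\chi_A(\gamma)$ for every $\gamma\in\Phi_N$. Hence $H=\ker\chi_A$ is normal in $\bar\Gamma(2)$ and $G$ acts trivially on $\Phi_N/H\simeq\mathbf{Z}/N\mathbf{Z}$, so the finite quotient $Q:=\bar\Gamma(2)/H$ is a central extension $1\to\mathbf{Z}/N\mathbf{Z}\to Q\to(\mathbf{Z}/N\mathbf{Z})^2\to1$; in particular $[Q,[Q,Q]]=1$. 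Being generated by the images of $A$ and $B$ and nilpotent of class $\le2$, $Q$ is a quotient of the free $2$-step nilpotent group on two generators, which is $H_{\mathbf{Z}}=\bar\Gamma(2)/\bar\Gamma(2)_3$. Therefore $H=\ker(\bar\Gamma(2)\twoheadrightarrow Q)$ contains $\bar\Gamma(2)_3$, which is the assertion. (One can also bypass $Q$: $\bar\Gamma(2)_3$ is generated by the elements $[\,gCg^{-1},h\,]$ with $g,h\in\bar\Gamma(2)$, each of which lies in $\Phi_N$ and satisfies $\chi_A([\,gCg^{-1},h\,])=\chi_A(gCg^{-1})-\chi_A\bigl((hg)C(hg)^{-1}\bigr)=\chi_A(C)-\chi_A(C)=0$ by $G$-invariance.)

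The only genuinely non-formal ingredient is Rohrlich's theorem, invoked precisely to obtain the $G$-invariance of $[D_A]$; the part needing care is the bookkeeping there — tracking how $G$ permutes the cusps and the resulting decomposition of $g_*D_A-D_A$ into Rohrlich's relation plus an $N$-divisible class — together with the clean identification of the restriction of $W$ over $Y_N$ with $(\ker\chi_A)\backslash\tH$ and the observation that $\chi_A$ depends only on the divisor class of $D_A$.
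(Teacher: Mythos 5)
Your proof is correct, and its decisive ingredient is the same as the paper's: Rohrlich's theorem forces $f_A$ to transform under a deck transformation of $F_N\rightarrow X(2)$ into a constant times an $N$-th power times $f_A$, which is exactly the statement that the multiplier (monodromy) character of $f_A^{1/N}$ on $\Phi_N$ is invariant under conjugation by $\bar\Gamma(2)$. The two arguments differ only in packaging. The paper establishes the key invariance by computing $g^N_{|U}=g^N\prod_i(-y+\zeta^i)^r\prod_i(-y+\zeta^i)^t$ directly from the product formula for $f_A$ and the substitution $y\mapsto\zeta^r y$, and then concludes with the two-line identity $g_{|UV}=g_{|VU}$ for $U\in\Gamma(2)$, $V\in[\Gamma(2),\Gamma(2)]$; you establish it at the level of divisor classes, decomposing $g_*D_A-D_A$ into Rohrlich's relation $\sum_j[a_j]\equiv N[a_0]$ plus $N$ times a cuspidal degree-zero divisor (killed by the $N$-torsion of the cuspidal group), and you conclude via the observation that $\bar\Gamma(2)/\ker\chi_A$ is a central extension of $(\Z/N\Z)^2$ by $\Z/N\Z$, hence $2$-step nilpotent. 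Your divisor-class bookkeeping and the central-extension finish are slightly more structural and make the logical dependence on Rohrlich's generators-and-relations description a bit more transparent, but no new idea is involved; both routes are valid.
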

\begin{proof}
Let $U\in \Ga(2)$ and $V\in [\Ga(2),\Ga(2)]]$. Let $g$ be an $N$-th root of $f_{A}$. One has to prove that $g$ is invariant under $[U,V]$, that is $g_{|UV}=g_{|VU}$. Note first that $y_{|U}=\zeta^{r}y$, with $r\in {\Z}$. One has 
$$
g^{N}_{|U}=\prod_{i\in\Z/N\Z}(-\zeta^{r}y+\zeta^{i})^{\{i\}}=\prod_{i\in\Z/N\Z}(-y+\zeta^{i-r})^{\{i\}}=\prod_{i\in\Z/N\Z}(-y+\zeta^{i})^{\{i+r\}}.
$$
Write $\{i+r\}=\{i\}+r+t$, with $t\in N\Z$. Thus we get
$$
g^{N}_{|U}=g^N\prod_{i\in\Z/N\Z}(-y+\zeta^{i})^{r}\prod_{i\in\Z/N\Z}(-y+\zeta^{i})^{t}.
$$
The last factor is obviously an $N$-th power. 
By the description of Rohrlich of the cuspidal subgroup of $F_{N}$, the factor $\prod_{i\in\Z/N\Z}(-y+\zeta^{i})^{r}$ is an $N$-th power in the function field of $F_{N}$. So there exists a function $h$ on $F_{N}$, and an integer $s$ such that $g_{|U}=g\zeta^{s}h$. Note that $h_{|V}=h$ and $g_{|V}=\zeta^{q}g$, with $q\in\Z$. Therefore, one has:
$$
g_{|UV}=g_{|V}\zeta^{s}h_{|V}=\zeta^{s+q}gh=\zeta^{q}g_{|U}=g_{|VU}.
$$

\end{proof}

\begin{prop}
Any covering of degree $N$ of the Fermat modular curve $X_{N}$ that factors through $X_{[\Ga(2),[\Ga(2),\Ga(2)]]}$, factors through the Heisenberg covering. 
\end{prop}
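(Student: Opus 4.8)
The plan is to translate both hypotheses into statements about a finite-index subgroup of $\bar\Gamma(2)$ and then to run a two-line computation in the free nilpotent quotient $\bar\Gamma(2)/\bar\Gamma(2)_3=H_{\Z}$, where $\bar\Gamma(2)_3=[\bar\Gamma(2),[\bar\Gamma(2),\bar\Gamma(2)]]=\ker(\bar\Gamma(2)\to H_{\Z})$.

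First I would set up the dictionary. Since $X_{[\Gamma(2),[\Gamma(2),\Gamma(2)]]}$ is the Riemann surface attached to $\bar\Gamma(2)_3$, a connected covering $Y\to X_N$ of degree $N$ is $Y=X_\Psi$ for a subgroup $\Psi\subseteq\Phi_N$ with $[\Phi_N:\Psi]=N$, and the hypothesis that $Y\to X_N$ factors through $X_{[\Gamma(2),[\Gamma(2),\Gamma(2)]]}$ reads exactly $\bar\Gamma(2)_3\subseteq\Psi$. Because $\bar\Gamma(2)$ is torsion free it acts freely on $\mathfrak{H}$, so $Y\to X_N$ can only ramify above the cusps of $X_N$; as the cusps of $X_N$ lying over $\infty$, $0$, $1$ all have width $N$ (the common order of $A$, $B$, $A^{-1}B$ in $\bar\Gamma(2)/\Phi_N=(\Z/N\Z)^2$), the covering $Y\to X_N$ is unramified — which is what ``covering'' means here, as for the covering $W\to X_N$ of the preceding proposition — precisely when $gA^Ng^{-1}$, $gB^Ng^{-1}$ and $g(A^{-1}B)^Ng^{-1}$ lie in $\Psi$ for all $g\in\bar\Gamma(2)$; that is, when $\Psi$ contains the normal closures in $\bar\Gamma(2)$ of $A^N$, $B^N$ and $(A^{-1}B)^N$.

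Next let $\Psi_0$ be the subgroup of $\bar\Gamma(2)$ generated by $\bar\Gamma(2)_3$ together with the normal closures of $A^N$ and $B^N$; by the previous paragraph $\Psi_0\subseteq\Psi$. The heart of the matter is the identity $\Psi_0=\Phi'_N$, which I would prove by reducing modulo $\bar\Gamma(2)_3$, inside $H_{\Z}$ (writing $x,y,z$ for the images of $A,B,C$). By definition $\Phi'_N=\langle A^N,B^N,C^N,\bar\Gamma(2)_3\rangle$, so its image in $H_{\Z}$ is $\langle x^N,y^N,z^N\rangle$, which is exactly the kernel of $H_{\Z}\to H_{N,N,N}$, hence normal in $H_{\Z}$. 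The image of $\Psi_0$ in $H_{\Z}$ is the normal closure of $\{x^N,y^N\}$; it contains $[y,x^N]=[y,x]^N=z^{-N}$, hence contains $\langle x^N,y^N,z^N\rangle$, and is obviously contained in it, so the two coincide. Thus $\Psi_0$ and $\Phi'_N$ both contain $\bar\Gamma(2)_3=\ker(\bar\Gamma(2)\to H_{\Z})$ and have equal image in $H_{\Z}$, whence $\Psi_0=\Phi'_N$.

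To conclude: $\Phi'_N=\Psi_0\subseteq\Psi\subseteq\Phi_N$, and since $N$ is odd $\Phi'_N=\Gamma_{N,N,N}$, so $[\Phi_N:\Phi'_N]=|H_{N,N,N}|/|(\Z/N\Z)^2|=N^3/N^2=N=[\Phi_N:\Psi]$ (consistent with the covering $X'_N\to X_N$ being cyclic of degree $N'=N$); therefore $\Psi=\Phi'_N$, so $Y$ is isomorphic over $X_N$ to the Heisenberg covering $X'_N$ and in particular $Y\to X_N$ factors through the Heisenberg covering. The only genuinely delicate step is the translation in the first paragraph — extracting from ``unramified covering of the compact curve $X_N$'' the exact condition that $\Psi$ must absorb the normal closures of the cusp elements $A^N$, $B^N$, $(A^{-1}B)^N$, and in particular that this already pins $\Psi_0$ down to index $N$ in $\Phi_N$; after that the proof is the short commutator computation above, in which in fact only $A^N$ and $B^N$ play a role.
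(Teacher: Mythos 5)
Your proof is correct and follows essentially the same route as the paper's: translate the two hypotheses into conditions on a finite-index subgroup of $\bar\Ga(2)$ (unramifiedness at the cusps forces $A^N,B^N$ into the subgroup, the factorization hypothesis forces $[\bar\Ga(2),[\bar\Ga(2),\bar\Ga(2)]]$ into it), and then identify the subgroup with $\Phi'_N$ by working in $H_{\Z}=\bar\Ga(2)/\bar\Ga(2)_3$ and comparing indices. The only real difference is a minor refinement on your side: the paper begins by declaring that the covering corresponds to a cyclic quotient of order $N$ of $\Phi_N$ and uses that cyclicity to conclude $C^N\in\Gamma$, whereas your computation $[y,x^N]=z^{-N}$ shows that $C^N$ already lies in the normal closure of $\{A^N,B^N\}$ modulo $\bar\Ga(2)_3$, so no Galois or cyclicity assumption on the covering is needed.
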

\begin{proof}
Such a covering corresponds to a cyclic quotient of order $N$ of $\Phi_{N}$. Denote by $\Gamma$ the corresponding cocyclic subgroup of $\Phi_{N}$. Since the covering is unramified at the cusps, $\Gamma$ contains the matrices $A^{N}$ and $B^{N}$. Recall that $\Phi_{N}$ is generated by $A^{N}$, $B^{N}$ and the commutator subgroup of $\Ga(2)$. Since the covering factors through $X_{[\Ga(2),[\Ga(2),\Ga(2)]]}$, the group $\Gamma$ contains $[\Ga(2),[\Ga(2),\Ga(2)]]$. 
In particular, $\Gamma$ contains $[A,C]$ and $[B,C]$. Thus the image of $C$ in $\Phi_{N}/\Gamma$ is of order $N$. Consequently, $\Gamma$ is the group generated by $[\Ga(2),[\Ga(2),\Ga(2)]]$, $A^{N}$, $B^{N}$, $C^{N}$.

\end{proof}

\begin{cor}
Let $\zeta$ be a primitive $N$-th root of unity in $\Q(\mu_N)$. The function 
\[
f_{A}'=\prod_{i\in\Z/N\Z}(-y+\zeta^{i})^{\{i\}}
\]
admits an $N$-th root in the function field of $F'_N$.

\end{cor}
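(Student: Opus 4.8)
The plan is to recognize that adjoining an $N$-th root of $f_A'$ reconstructs exactly the Heisenberg covering $F'_N\to F_N$, so that the desired root is, by construction, a function on $F'_N$. First I would set up the relevant Kummer cover. Since $N$ is odd and, by Rohrlich's theorem, the class of $D_A$ has order exactly $N$ in the cuspidal subgroup of the Jacobian of $F_N$, the class of $f_A'$ in $\Q(\mu_N)(F_N)^\times/(\Q(\mu_N)(F_N)^\times)^N$ also has order exactly $N$ — because $\mathrm{div}(f_A')=ND_A$ and no proper multiple of $D_A$ is principal — and the same holds after base change to $\C$; moreover $\mathrm{div}(f_A')\in N\,\Z[\partial_{\Phi_N}]^0$. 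Hence extracting $\sqrt[N]{f_A'}$ yields a connected cyclic covering $G\to F_N$ over $\Q(\mu_N)$ of degree exactly $N$ (cyclic because $\mu_N\subset\Q(\mu_N)$), unramified everywhere, including at the cusps, whose analytification is a connected covering of Riemann surfaces $W\to X_N$ of degree $N$.

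Next I would feed $W\to X_N$ into the two preceding propositions: the first shows it factors through $X_{[\Ga(2),[\Ga(2),\Ga(2)]]}$, and the second shows that any degree-$N$ covering of $X_N$ factoring through $X_{[\Ga(2),[\Ga(2),\Ga(2)]]}$ factors through the Heisenberg covering $X'_N\to X_N$. So $W\to X_N$ factors as $W\to X'_N\to X_N$; since $N$ is odd we have $N'=N$, so $X'_N\to X_N$ already has degree $N$, and comparison of degrees forces $W\to X'_N$ to be an isomorphism. I would then descend: finite covers of the smooth projective curve $F_N$ over the characteristic-zero field $\Q(\mu_N)$ are determined by their base change to $\C$, and $F'_N\to F_N$ is by construction the $\Q(\mu_N)$-model of $X'_N\to X_N$, so $G\cong F'_N$ over $\Q(\mu_N)$. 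Therefore $\Q(\mu_N)(F'_N)=\Q(\mu_N)(F_N)\big(\sqrt[N]{f_A'}\big)$ contains an $N$-th root of $f_A'$, which is the claim. For a general primitive $N$-th root $\zeta$ one runs the identical argument with $f_A'$ formed from that $\zeta$: relabelling the cusps $a_i$ shows its divisor is again $N$ times a cuspidal degree-$0$ divisor whose class has order $N$ by Rohrlich's description (using that the cuspidal subgroup is killed by $N$), so the associated degree-$N$ cover again factors through $X_{[\Ga(2),[\Ga(2),\Ga(2)]]}$ and hence equals $F'_N$.

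The mathematical content is carried entirely by the two group-theoretic propositions already established; the only point deserving a word of care is the descent step, where the Riemann-surface isomorphism $W\cong X'_N$ must be upgraded to an isomorphism of covers over $\Q(\mu_N)$. This is unproblematic here precisely because $\mu_N\subset\Q(\mu_N)$, so the Kummer cover $G$ is the intended rational form, and $F'_N$ was defined as the model of $X'_N\to X_N$ over $\Q(\mu_N)$. I would close by noting that this very identification of function fields is what produces the explicit equations of Theorem \ref{Modelthm}, since modulo $N$-th powers $f_A'$ equals $\prod_{j=1}^{(N-1)/2}(y-\zeta^j)^{j}/\prod_{j=1}^{(N-1)/2}(y-\zeta^{-j})^{j}$, the ratio defining $(T_\zeta/U_\zeta)^N$.
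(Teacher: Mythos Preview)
Your proof is correct and follows exactly the paper's approach: apply the two preceding propositions to see that the cyclic degree-$N$ cover defined by $\sqrt[N]{f_A'}$ factors through $X_{[\Ga(2),[\Ga(2),\Ga(2)]]}$ and hence coincides with the Heisenberg covering. The paper compresses this into two sentences, while you have spelled out the order-$N$ verification via Rohrlich, the degree comparison, the treatment of a general primitive $\zeta$, and the descent to $\Q(\mu_N)$; the last point is more care than the paper takes at this stage (the identification with the explicit model over $\Q(\mu_N)$ is only carried out in the subsequent corollaries), but nothing you wrote is wrong or superfluous.
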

\begin{proof}
Indeed, the covering of $F_{N}$ defined by an $N$-th root of $f_{A}'$ is cyclic of order $N$ and factorizes through $X_{[\Ga(2),[\Ga(2),\Ga(2)]]}$. Consequently, it is $F'_{N}$.
\end{proof}

Let $K$ be the field of fractions of the curve given in the introduction over the complex numbers. In inhomogeneous form, it is generated by the variable $X$, $Y$, $T_{\zeta}$ for every primitive $N$-th root of unity $\zeta$,  with the relations
$$
X^{N}+Y^{N}=1
$$ 
and, for every primitive $N$-th root of unity $\zeta$ (in ${\bf Q}(\mu_{N})$)
$$
\prod_{j=1}^{(N-1)/2}(Y-\zeta^{-j})^{j}T_{\zeta}^{N}=\prod_{j=1}^{(N-1)/2}(Y-\zeta^{j})^{j}.
$$

\begin{cor}
The function field of $F'_{N}$ over $\C$ is isomorphic to  $K$.
\end{cor}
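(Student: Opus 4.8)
The plan is to construct the isomorphism directly, matching a known presentation of $\mathbb{C}(F'_N)$ against the presentation of $K$. Recall that, by the construction carried out above (and using that $D_A$ has order exactly $N$ in the cuspidal group, by Rohrlich), $\mathbb{C}(F'_N)$ is the cyclic degree-$N$ extension of $\mathbb{C}(F_N)=\mathbb{C}(x,y)$ obtained by adjoining an $N$-th root of $f'_A=\prod_{i\in\mathbb{Z}/N\mathbb{Z}}(-y+\zeta^{i})^{\{i\}}$, for any fixed primitive $N$-th root of unity; moreover, by the preceding corollary, \emph{for every} primitive $\zeta$ the field $\mathbb{C}(F'_N)$ contains an $N$-th root $t_\zeta$ of the corresponding $f'_A$.

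The first step is an elementary rewriting. Writing $\{i\}=i$ for $1\le i\le N-1$, splitting the product over the two halves of $\{1,\dots,N-1\}$ and reindexing the upper half by $i=N-j$ (so that $\zeta^{i}=\zeta^{-j}$), one obtains, for every primitive $N$-th root of unity $\zeta$,
$$
f'_A=g_\zeta\,h_\zeta^{N},\qquad
g_\zeta:=\frac{\prod_{j=1}^{(N-1)/2}(y-\zeta^{j})^{j}}{\prod_{j=1}^{(N-1)/2}(y-\zeta^{-j})^{j}},\qquad
h_\zeta:=\prod_{i=(N+1)/2}^{N-1}(\zeta^{i}-y)\in\mathbb{C}(F_N)^{\times}.
$$
Hence $f'_A$ and $g_\zeta$ differ by an $N$-th power in $\mathbb{C}(F_N)^{\times}$, so $s_\zeta:=t_\zeta/h_\zeta\in\mathbb{C}(F'_N)$ satisfies $s_\zeta^{N}=g_\zeta$, which is exactly the relation the model imposes on $T_\zeta$ (the relation being $\prod_{j=1}^{(N-1)/2}(Y-\zeta^{-j})^{j}\,T_\zeta^{N}=\prod_{j=1}^{(N-1)/2}(Y-\zeta^{j})^{j}$, i.e. $T_\zeta^{N}=g_\zeta(Y)$).

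Since moreover $x,y\in\mathbb{C}(F'_N)$ satisfy $x^{N}+y^{N}=1$, the assignment $X\mapsto x$, $Y\mapsto y$, $T_\zeta\mapsto s_\zeta$ defines a $\mathbb{C}$-algebra homomorphism $\phi$ from the affine coordinate ring of the model into $\mathbb{C}(F'_N)$. The subfield generated by the image of $\phi$ contains $\mathbb{C}(F_N)$ and, for a fixed primitive $\zeta_0$, the element $s_{\zeta_0}$; as $h_{\zeta_0}\in\mathbb{C}(F_N)^{\times}$ and $(s_{\zeta_0}h_{\zeta_0})^{N}=f'_A$, that subfield already equals $\mathbb{C}(F_N)(\sqrt[N]{f'_A})=\mathbb{C}(F'_N)$. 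Thus $\phi$ factors through an irreducible one-dimensional $\mathbb{C}$-curve (the one cut out, through $\ker\phi$, by the model) whose function field — this is our $K$ — maps onto $\mathbb{C}(F'_N)$; being a surjective homomorphism of fields, this map is an isomorphism.

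I expect the only genuine computation to be the identity $f'_A=g_\zeta h_\zeta^{N}$, which is routine bookkeeping, the one subtlety being the coherent choice of the $N$-th root $s_\zeta$. It is worth noting that over $\mathbb{C}$ the presence of a variable $T_\zeta$ for \emph{every} primitive $\zeta$ is redundant: running the same computation with $\zeta$ replaced by $\zeta^{m}$ shows the class of $g_{\zeta^m}$ in $\mathbb{C}(F_N)^{\times}/(\mathbb{C}(F_N)^{\times})^{N}$ equals $m^{-1}$ times that of $g_{\zeta_0}$, so each $s_{\zeta^m}$ lies in $\mathbb{C}(F_N)(s_{\zeta_0})$ and is pinned down by $s_{\zeta_0}$ only up to an $N$-th root of unity; consequently a single primitive root already generates $\mathbb{C}(F'_N)$, and this is why, over $\mathbb{C}$, the model is reducible and ``$K$'' has to be read as the function field of the component carrying $X'_N$ (all such components being isomorphic).
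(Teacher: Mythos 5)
Your proof is correct and follows the same route as the paper's: the identification rests on the double inclusion obtained by combining the Kummer description $\C(F'_N)=\C(F_N)\bigl(\sqrt[N]{f_A}\bigr)$ with the preceding corollary, which supplies an $N$-th root of each $f'_A$ inside $\C(F'_N)$. Two remarks. First, the paper's convention is that $\{i\}$ is the representative of $i$ in $\{-(N-1)/2,\dots,(N-1)/2\}$, not in $\{1,\dots,N-1\}$; with that convention the signs $(-1)^j$ cancel between the two halves of the product and $f_A=g_\zeta$ on the nose, so your correction factor $h_\zeta$ is superfluous --- harmless, since you only use $f'_A$ modulo $N$-th powers of $\C(F_N)^\times$. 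Second, your closing observation is a genuine and correct refinement rather than a digression: for $\zeta'=\zeta^m$ one has $g_{\zeta'}\equiv g_\zeta^{\mu}$ modulo $(\Q(\mu_N)(x,y)^\times)^N$ with $m\mu\equiv1\pmod N$, so the ratio of $T_{\zeta'}$ to the corresponding power of $T_\zeta$ is an unconstrained $N$-th root of unity, and the model with one variable per primitive root is reducible (already over $\Q(\mu_N)$, hence over $\C$); the paper's own proof treats $K$ as a field without comment, and your reading of $K$ as the function field of an irreducible component --- all components being isomorphic --- is the precise way to make the statement true.
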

\begin{proof}
Indeed, the function field of $F'_{N}$ is the subfield of $K$ generated by an $N$-th root of $f_{A}$ over the function field of $F_{N}$. Let $\zeta'$ be a primitive $N$-th root of unity. By the preceding corollary, $T_{\zeta'}$ belongs to $K$. Thus all of $K$ is contained in the function field of $F'_{N}$.
\end{proof}
Thus we have shown that the curve $F'_N$ extends the Riemann surface $X'_N$.
\begin{cor}
The Riemann surfaces $X'_N$ and $F'_N({\C})$ are isomorphic.
\end{cor}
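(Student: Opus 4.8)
The plan is to check that the covering $X'_N\to X_N$ of compact Riemann surfaces is exactly the analytification of the algebraic covering $F'_N\to F_N$, and then to deduce the claimed isomorphism from the equivalence between finite coverings of a compact Riemann surface and finite coverings in the algebraic category (Riemann existence theorem, or GAGA for curves). The inputs are the already recorded identification $F_N(\C)\simeq X_N$, the fact --- established when the covering $F'_N\to F_N$ was constructed --- that $F'_N$ is, over $\C$, the cyclic degree-$N$ covering of $F_N$ obtained by adjoining an $N$-th root of $f_A$, and the two factorization propositions proved above.

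First I would spell out the analytic/algebraic comparison. Since $\operatorname{div}(f_A)=ND_A$ lies in $N\Z[\partial_{\Phi_N}]^0$, adjoining an $N$-th root of $f_A$ defines an unramified cyclic degree-$N$ covering of $F_N$, and this construction commutes with passage to complex points: transporting along $F_N(\C)\simeq X_N$, the analytification of $F'_N\to F_N$ is precisely the Riemann-surface covering $W\to X_N$ defined by an $N$-th root of $f_A$, so that $F'_N(\C)\simeq W$. Next I would invoke the proposition that $W\to X_N$ factors through $X_{[\Ga(2),[\Ga(2),\Ga(2)]]}$, together with the proposition that any degree-$N$ covering of $X_N$ factoring through $X_{[\Ga(2),[\Ga(2),\Ga(2)]]}$ factors through the Heisenberg covering $X'_N\to X_N$; these produce a factorization $W\to X'_N\to X_N$. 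Comparing degrees, $\deg(W\to X_N)=N$ while $\deg(X'_N\to X_N)=N'=N$ (since $N$ is odd), forces $W\to X'_N$ to be an isomorphism --- $W$ being irreducible, as $f_A$ is not a proper power by Rohrlich's theorem. Chaining the two steps, $F'_N(\C)\simeq W\simeq X'_N$.

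One may equivalently argue through function fields: $X'_N$, being a compact Riemann surface, is the analytification of a unique smooth projective $\C$-curve with function field the field $\mcM(X'_N)$ of meromorphic functions; the isomorphism $W\simeq X'_N$ identifies $\mcM(X'_N)$ with the function field of $F'_N$, namely the field $K$ of the preceding corollary, and the anti-equivalence between smooth projective $\C$-curves and their function fields then gives $X'_N\simeq F'_N(\C)$. The only step that is not pure bookkeeping is the analytic/algebraic compatibility used in the first move --- the assertion that ``adjoining an $N$-th root of $f_A$'' yields matching objects in the two categories; this is Riemann existence, and its one substantive input is the divisor identity $\operatorname{div}(f_A)=ND_A\in N\Z[\partial_{\Phi_N}]^0$ supplied by Rohrlich's description of the cuspidal subgroup, which makes both $N$-th roots exist and both coverings unramified. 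Beyond that I expect no obstacle, since all the group- and divisor-theoretic work is already contained in the preceding propositions.
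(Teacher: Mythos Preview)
Your proposal is correct and follows essentially the same approach as the paper. The paper treats this corollary as an immediate restatement of the preceding work (indeed, the only ``proof'' in the paper is the sentence ``Thus we have shown that the curve $F'_N$ extends the Riemann surface $X'_N$'' placed just before the statement), and you have simply unpacked those implicit steps: identify the analytification of the $N$-th-root-of-$f_A$ covering with $W$, invoke the two factorization propositions to get $W\to X'_N$, compare degrees (using $N'=N$ since $N$ is odd and Rohrlich's theorem to ensure $W$ is connected), and conclude via the function-field corollary that precedes the statement.
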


\subsection{The Heisenberg covering}
Denote by $G$ the Galois group of the field extension $K|\Q(X^N)$.
It sits in an exact sequence
$$
1\rightarrow H_N\rightarrow G\rightarrow (\Z/N\Z)^\times\rightarrow 1
$$
by the transitive action of $G$ on $N$-th roots of unity, and the fact that the Heisenberg covering is defined over  $\Q(\zeta)$. Recall that for $i\in (\Z/N\Z)$, $\{i\}$ denotes the representative of $i$ in $\{-(N-1)/2,...., (N-1)/2\}$.

\begin{prop}
For $\sigma\in G$,  there exists $u$, $v$, $s\in (\Z/N\Z)$ and $r\in (\Z/N\Z)^\times$ such that $\sigma(X)=\zeta^uX$, $\sigma(Y)=\zeta^v(Y)$, $\sigma(\zeta)=\zeta^r$. For $\rho$ a representative of $r$ in $\Z$, 
$$
\sigma(T^\rho)=\zeta^sTX^{v}\prod_{i\in  (\Z/N\Z)}(-Y+\zeta^i)^{(\rho\{i/\rho\}-\{i\})/N}.
$$
Furthermore, $(u,v,s,r)\in (\Z/N\Z)^{3}\times (\Z/N\Z)^\times$ caracterizes $\sigma$.
\end{prop}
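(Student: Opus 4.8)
The plan is to analyze $\sigma\in G$ by restricting it successively along the tower $\Q(X^N)\subset\Q(\mu_N)(X^N)\subset K_N\subset K$, where $K_N=\Q(\mu_N)(X,Y)$ denotes the function field of the Fermat curve $F_N$. Since $\sigma$ fixes $\Q(X^N)$, it fixes $X^N$ and $Y^N=1-X^N$, so $\sigma(X)/X$ and $\sigma(Y)/Y$ are $N$-th roots of unity of $K$; as $K$ is the function field over $\Q(\mu_N)$ of the geometrically connected curve $F'_N$, the roots of unity of $K$ are exactly $\mu_N$, whence $\sigma(X)=\zeta^uX$ and $\sigma(Y)=\zeta^vY$ for unique $u,v\in\Z/N\Z$, while $\sigma|_{\Q(\mu_N)}$ gives $\sigma(\zeta)=\zeta^r$ for a unique $r\in(\Z/N\Z)^\times$. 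Fix a lift $\rho\in\Z$ of $r$.

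The core is the action on $T$. Starting from $T^N=f_A=\prod_{i\in\Z/N\Z}(-Y+\zeta^i)^{\{i\}}$, applying $\sigma$ turns each factor into $(-\zeta^vY+\zeta^{\rho i})^{\{i\}}=\zeta^{v\{i\}}(-Y+\zeta^{\rho i-v})^{\{i\}}$, and $\sum_i\{i\}=0$ kills the power of $\zeta^v$; re-indexing $i\mapsto\rho i-v$ then gives the closed form $\sigma(f_A)=\prod_j(-Y+\zeta^j)^{\{(j+v)/\rho\}}$, so that $\sigma(T^\rho)^N=\sigma(f_A)^\rho$. One next shows $\sigma(f_A)^\rho/f_A$ is an $N$-th power in $K_N$: passing to divisors on $F_N$ and using the elementary principal divisors $\mathrm{div}(-Y+\zeta^i)=N[a_i]-\sum_k[c_k]$, $\mathrm{div}(X)=\sum_k[a_k]-\sum_k[c_k]$ and $\mathrm{div}(f_A)=N\sum_i\{i\}[a_i]$, this reduces to checking that the degree-zero cuspidal divisor $\rho\sum_j\{(j+v)/\rho\}[a_j]-\sum_i\{i\}[a_i]$ is principal; writing $\{(j+v)/\rho\}=\{j/\rho\}+\{v/\rho\}-N\eta_j$, using $\{i/\rho\}\equiv\rho^{-1}\{i\}\pmod N$, and absorbing the leftover terms into the relations $N[a_i]\sim\sum_k[a_k]$ (from the first two divisors) and $\mathrm{div}(f_A)$, all the pieces collapse to a principal divisor. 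Hence $\sigma(T^\rho)=\zeta^s\,T\,w$ for some $s\in\Z/N\Z$ and some $w\in K_N^\times$ with $w^N=\sigma(f_A)^\rho/f_A$; since $\mathrm{div}(w)$ is explicitly computed above, comparing it with $\mathrm{div}(X)$ and $\mathrm{div}(-Y+\zeta^i)$ — and using the identity $\prod_{j\in\Z/N\Z}(-Y+\zeta^j)=X^N$, valid because $N$ is odd — identifies $w$ with the displayed monomial $X^v\prod_i(-Y+\zeta^i)^{(\rho\{i/\rho\}-\{i\})/N}$, up to an $N$-th root of unity absorbed into $\zeta^s$. This produces the quadruple $(u,v,s,r)$.

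It remains to see that $(u,v,s,r)$ characterizes $\sigma$. The field $K$ is generated over $\Q$ by $X$, $Y$, $\zeta$, $T$; the images of the first three are fixed by $(u,v,r)$, the image of $T^\rho$ by $(u,v,s,r)$ via the displayed formula, and then $\sigma(T)$ follows on choosing $a,b\in\Z$ with $a\rho+bN=1$ and writing $\sigma(T)=\sigma(T^\rho)^a\,\sigma(f_A)^b$ (note $\sigma(f_A)$ depends only on $(v,r)$). So $\sigma\mapsto(u,v,s,r)$ is injective. Since the exact sequence $1\to H_N\to G\to(\Z/N\Z)^\times\to1$ gives $|G|=N^3\,|(\Z/N\Z)^\times|$, which is precisely the number of quadruples in $(\Z/N\Z)^3\times(\Z/N\Z)^\times$, this injection is a bijection: every quadruple occurs, and it determines $\sigma$.

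The main obstacle is the middle step — the $N$-th-power claim for $\sigma(f_A)^\rho/f_A$ and the explicit shape of $w$: one has to handle the arithmetic of the balanced representatives $\{\cdot\}$ and of the carries $\eta_j$, and book-keep the resulting cuspidal divisors so that, using only the elementary principal divisors on $F_N$ recalled above, they amount to a principal divisor. Everything else is formal Kummer theory plus the cardinality count from the given exact sequence.
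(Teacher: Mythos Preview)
Your argument is correct, but the route you take through divisors on $F_N$ is considerably more laborious than what the paper does, and the extra work buys you nothing. After you reach
\[
\sigma(T^\rho)^N \;=\; T^N\prod_{j\in\Z/N\Z}(-Y+\zeta^{j})^{\rho\{(j+v)/\rho\}-\{j\}},
\]
the paper simply observes that each exponent $\rho\{(j+v)/\rho\}-\{j\}$ is congruent to $v$ modulo $N$, because $\{k\}\equiv k\pmod N$ for every $k$; subtracting $v$ and using $\prod_j(-Y+\zeta^j)=X^N$ gives immediately
\[
\sigma(T^\rho)^N \;=\; T^N X^{Nv}\prod_{j}(-Y+\zeta^{j})^{\rho\{(j+v)/\rho\}-\{j\}-v},
\]
where now every exponent on the right is divisible by $N$. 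Taking $N$-th roots produces the displayed formula up to a factor $\zeta^s$, with no appeal to divisors, carries $\eta_j$, or cuspidal relations. Your divisor-theoretic verification that $\sigma(f_A)^\rho/f_A$ is an $N$-th power in $K_N$ is, in effect, a reproof of this elementary congruence in geometric clothing; it works, but the direct congruence is both shorter and more transparent. On the positive side, your treatment of the final clause (that $(u,v,s,r)$ characterizes $\sigma$) is more explicit than the paper's: your injectivity-plus-cardinality argument via $|G|=N^3\,|(\Z/N\Z)^\times|$ is clean, whereas the paper leaves this point essentially unargued.
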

\begin{proof}
The first three identities are evident. A simple calculation establishes the last one. Indeed
$$
\sigma(T^\rho)^N=\sigma(T^N)^\rho=\sigma(\prod_{i\in  (\Z/N\Z)}(-Y+\zeta^i)^{\{i\}})^\rho=\prod_{i\in  (\Z/N\Z)}(-\zeta^vY+\zeta^{\rho i})^{\{i\}\rho}.
$$
By factoring $T^N$ and $\prod_i \zeta^{v\{i\}}$, and replacing the variable $i$ by $j=\rho i-v$, one gets
$$
\sigma(T^\rho)^N=T^N\prod_i\zeta^{v\{i\}}\prod_{j\in  (\Z/N\Z)}(-Y+\zeta^{j})^{\rho\{(j+v)/\rho\}}\prod_{j\in  (\Z/N\Z)}(-Y+\zeta^j)^{-\{j\}}.
$$
We use $\prod_i \zeta^{v\{i\}}=1$ and we get 
$$
\sigma(T^\rho)^N=T^N\prod_{j\in  (\Z/N\Z)}(-Y+\zeta^{j})^{\rho\{(j+v)/\rho\}-\{j\}-v}\prod_{j\in  (\Z/N\Z)}(-Y+\zeta^j)^{v}.
$$
Using the identity $X^N=(1-Y^N)=\prod_{j\in  (\Z/N\Z)}(-Y+\zeta^{j})$, we get 
$$
\sigma(T^\rho)^N=T^NX^{Nv}\prod_{j\in  (\Z/N\Z)}(-Y+\zeta^{j})^{\rho\{(j+v)/\rho\}-\{j\}-v}.
$$
The desired formula follows by taking $N$-th roots.
\end{proof} 
With the notations of the proposition, we note $\sigma=\sigma_{u,v,r,s}$. Murty and Ramakrishnan note that Deligne showed that $F'_N$ can be defined over $\Q$, without giving a reference. We spell out this assertion.

\begin{prop}
The curve $F'_N$ can be defined over $\Q$. More precisely, the surjective map $G\rightarrow (\Z/N\Z)^\times$ admits the map $r\mapsto \sigma_{0,0,r,0}$ as a section. Denote by $S$ the corresponding subgroup of $G$. It acts trivially on $N$-th roots of unity. Consequently, the field of invariants by $S$ in $K$ is the function field of a curve over $\Q$, and defines $F'_N$ over $\Q$. The Heisenberg covering $F'_N\rightarrow F_N$ extends also over $\Q$.
\end{prop}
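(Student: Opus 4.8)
The plan is to make the splitting of $G\to(\Z/N\Z)^\times$ explicit, conclude that $G=H_N\rtimes S$ with $S$ a complement to the ``geometric'' subgroup $H_N$, and then perform Galois descent of function fields. Recall that $K$ is the function field of $F'_N$ over $\Q(\mu_N)$, that its field of constants is $\Q(\mu_N)$ (as $F'_N(\C)=X'_N$ is a connected Riemann surface, $F'_N$ is geometrically connected), that $H_N=\Gal(K/\Q(\mu_N)(X^N))$, and that the surjection $G\to(\Z/N\Z)^\times$ is restriction to $\mu_N$, identifying $G/H_N$ with $\Gal(\Q(\mu_N)/\Q)$. It therefore suffices to exhibit a subgroup $S\subseteq G$ with $S\cap H_N=1$ and $|S|=\varphi(N)$, i.e.\ a section of $G\to(\Z/N\Z)^\times$; the fixed field $K^S$ will then be a model of $F'_N$ over $\Q$, and, as $K^S$ visibly contains the function field $\Q(X,Y)$ of $F_N$ over $\Q$, the covering $F'_N\to F_N$ will descend along with it.

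To build the section we use that $K=\Q(\mu_N)(X,Y,T)$, where $T$ is an $N$-th root of $f_A'=\prod_{i\in\Z/N\Z}(-Y+\zeta^i)^{\{i\}}$ for one fixed primitive $N$-th root of unity $\zeta$; this is a Kummer extension of degree exactly $N$ of the function field of $F_N$ over $\Q(\mu_N)$, the degree being $N$ because, by Rohrlich's theorem, the cuspidal divisor $D_A$ has exact order $N$. Fix $r\in(\Z/N\Z)^\times$ and an integer $\rho$ with $\rho\equiv r^{-1}\pmod N$. Since $\{i\rho\}-\rho\{i\}\in N\Z$ for every $i$, the element $h_r=\prod_{i\in\Z/N\Z}(-Y+\zeta^i)^{(\{i\rho\}-\rho\{i\})/N}$ of $\Q(\mu_N)(Y)$ is well defined, and it does not depend on the choice of $\rho$ (a change $\rho\mapsto\rho+N$ multiplies $T^\rho$ by $T^N=f_A'$ and $h_r$ by $(f_A')^{-1}$). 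Reindexing the product gives the identity $(f_A')^\rho\,h_r^N=\prod_{i\in\Z/N\Z}(-Y+\zeta^{ri})^{\{i\}}$, so the assignment $\tau_r$ sending $X\mapsto X$, $Y\mapsto Y$, $\zeta\mapsto\zeta^r$ and $T\mapsto T^\rho h_r$ respects the defining relations $X^N+Y^N=1$, the relation expressing that $\zeta$ is a primitive $N$-th root of unity, and $T^N=f_A'$; hence $\tau_r$ extends to a $\Q$-algebra automorphism of $K$, invertible (inverse $\tau_{r^{-1}}$), fixing $\Q(X^N)$, so $\tau_r\in G$, and mapping to $r$ in $(\Z/N\Z)^\times$. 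Expanding $\tau_r\tau_{r'}(T)=(T^\rho h_r)^{\rho'}\tau_r(h_{r'})$ and reindexing once more — using only that the symmetric representative $\{m\}$ depends on $m$ modulo $N$ alone — one obtains $\tau_r\tau_{r'}=\tau_{rr'}$. Thus $S=\{\tau_r\mid r\in(\Z/N\Z)^\times\}$ is a section; comparing the value of $\tau_r$ on an appropriate power of $T$ with the formula of the preceding proposition identifies $\tau_r$ with the automorphism $\sigma_{0,0,r,0}$ appearing there.

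The descent is then formal. From $S\cap H_N=1$ and $|S|=\varphi(N)=[G:H_N]$ we get $G=H_N\rtimes S$. In the Galois extension $K/K^S$ (group $S$), the elements of $S$ that fix $\Q(\mu_N)$ pointwise form the kernel of the isomorphism $S\isomap\Gal(\Q(\mu_N)/\Q)$, hence are trivial; so $K=K^S\cdot\Q(\mu_N)$, and therefore $K^S\cap\Q(\mu_N)=\Q$ and $K=K^S\otimes_\Q\Q(\mu_N)$. Since $K$ has transcendence degree $1$ over $\Q$ and $[K:K^S]$ is finite, so has $K^S$; and any element of $K^S$ algebraic over $\Q$ lies in the field of constants $\Q(\mu_N)$ of $K$, hence in $K^S\cap\Q(\mu_N)=\Q$, so $\Q$ is the field of constants of $K^S$. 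Consequently $K^S$ is the function field of a smooth projective geometrically connected curve over $\Q$ whose base change to $\Q(\mu_N)$ has function field $K$, hence is $F'_N$; this realises $F'_N$ over $\Q$. Finally $\tau_r$ fixes $X$ and $Y$, so $\Q(X,Y)\subseteq K^S$, and the resulting $\Q$-morphism of curves base-changes over $\Q(\mu_N)$ to the Heisenberg covering $F'_N\to F_N$, which is therefore defined over $\Q$ too.

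The only step requiring insight is the construction of $\tau_r$: the correct definition is $\tau_r(T)=T^\rho h_r$ with $\rho$ a lift of $r^{-1}$, the essential point being that $\prod_i(-Y+\zeta^{ri})^{\{i\}}$ differs from $(f_A')^\rho$ by the \emph{explicit} $N$-th power $h_r^N$ — a purely combinatorial identity about symmetric residues modulo $N$ — which at once makes $\tau_r$ well defined and makes $r\mapsto\tau_r$ a group homomorphism. Rohrlich's theorem enters only to guarantee that $K$ really is the degree-$N$ Kummer extension exhibited by the model of Theorem \ref{Modelthm}; everything after that is standard Galois descent.
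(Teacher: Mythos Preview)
Your proof is correct and follows essentially the same route as the paper: exhibit an explicit lift $r\mapsto\tau_r$ of $(\Z/N\Z)^\times$ into $G$ and verify by direct computation with the product $\prod_i(-Y+\zeta^i)^{\{i\}}$ that it is a group homomorphism, then descend. The only differences are presentational: you define $\tau_r$ on $T$ itself (using a lift $\rho$ of $r^{-1}$) and spell out the well-definedness and the Galois-descent step, whereas the paper works through the formula of the preceding proposition for $\sigma(T^{\rho})$ (with $\rho$ a lift of $r$) and leaves the descent implicit; the underlying combinatorial identity on symmetric residues is the same, and your $\tau_r$ does coincide with the paper's $\sigma_{0,0,r,0}$.
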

\begin{proof}
Group theoretic arguments about the structure of $G$ as an extension imply easily the existence of the section. We will show that our explicit map is indeed a section. Let $r$, $r'\in (\Z/N\Z)^\times$ and $\rho$ and $\rho'$ be representatives of $r$ and $r'$ respectively in $\Z$. We need to check that $\sigma_{0,0,r',0}\sigma_{0,0,r,0}=\sigma_{0,0,rr',0}$, which need to be verified only by application on $T$, or equivalently on $T^{\rho\rho'}$, and on $\zeta$. This is trivial for $\zeta$. Here is the computation on $T^{\rho\rho'}$. We first simplify the formula of the previous proposition
$$
\sigma_{0,0,r,0}(T^{\rho\rho'})=T^{\rho'}\prod_{j\in  (\Z/N\Z)}(-Y+\zeta^j)^{(\rho\{j/\rho\}-\{j\})\rho'/N}.
$$
Thus one has
$$
\sigma_{0,0,r',0}\sigma_{0,0,r,0}(T^{\rho\rho'})=T\prod_{j\in  (\Z/N\Z)}(-Y+\zeta^j)^{(\rho'\{j/\rho'\}-\{j\})/N}\prod_{j\in  (\Z/N\Z)}(-Y+\zeta^{\rho'j})^{(\rho\{j/\rho\}-\{j\})\rho'/N}).
$$
A change of variable in the second product of the right-hand side gives
\[
\sigma_{0,0,r',0}\sigma_{0,0,r,0}(T^{\rho\rho'})=T\prod_{j\in  (\Z/N\Z)}(-Y+\zeta^j)^{(\rho\rho'\{j/\rho\rho'\}-\{j\})/N}=\sigma_{0,0,rr',0}(T^{\rho\rho'}).
\]

\end{proof}
\subsection{Automorphisms}
The group $(\Z/N\Z)^{2}\simeq\Ga(2)/\Phi_{N}$ acts on $F_{N}$ : $(i,j)\in(\Z/N\Z)^{2}$ acts by the rule $(x,y)\mapsto(\zeta^{i}x,\zeta^{j}y)$. Such an action is defined on $\Q(\mu_{N})$. It lifts to an action of  $H_{\Z/N\Z}\simeq\Ga(2)/\Phi_{N}'$ on  $F_{N}'$. 

\begin{lemma}
The action of $H_{\Z/N\Z}$ on $F_{N}'$ is defined over $\Q(\mu_{N})$. 
\end{lemma}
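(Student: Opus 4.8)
The plan is to verify that the $H_{\Z/N\Z}$-action constructed in the previous section is stable under the natural $\Gal(\Q(\mu_N)/\Q)$-action on the curve $F'_N$, or equivalently that each automorphism in the image of $H_{\Z/N\Z}$ commutes (up to the same Galois twist) with the Galois descent data defining $F'_N$ over $\Q$. Concretely, I would work inside the function field $K$ and use the explicit description of $G=\Gal(K/\Q(X^N))$ from the proposition above, together with the section $r\mapsto\sigma_{0,0,r,0}$ that realizes the descent of $F'_N$ to $\Q$.

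First I would identify the subgroup of $G$ corresponding to the $H_{\Z/N\Z}$-action: an element $(i,j,k)\in H_{\Z/N\Z}$ acts on $F'_N$ by an automorphism that fixes $X^N$, $Y^N$ and acts on $\zeta$ trivially — that is, it lies in the kernel $H_N$ of $G\to(\Z/N\Z)^\times$ — and sends $(X,Y)\mapsto(\zeta^i X,\zeta^j Y)$ while translating $T$ by the appropriate power of $\zeta$ and monomial in $X,Y$. So the $H_{\Z/N\Z}$-action is precisely the action of the normal subgroup $H_N\subset G$ by left translation on the generic fiber. Then the statement that the action is defined over $\Q(\mu_N)$ amounts to: for every $\tau$ in the image of the section $S\cong(\Z/N\Z)^\times$ and every $h\in H_N$, one has $\tau h\tau^{-1}\in H_N$ — which is automatic since $H_N$ is normal — and moreover that the family of automorphisms is permuted by $S$ in the way dictated by the Galois action of $(\Z/N\Z)^\times$ on $\mu_N$. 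In other words, I would show $\sigma_{0,0,r,0}\,\sigma_{i,j,k,0}\,\sigma_{0,0,r,0}^{-1}=\sigma_{i',j',k',0}$ for suitable $(i',j',k')$, with $\zeta\mapsto\zeta^r$ accounting for the twist, using the composition formula for the $\sigma_{u,v,r,s}$ established in the preceding propositions.

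The key computational step is therefore a conjugation calculation in $G$: apply $\sigma_{0,0,r,0}^{-1}$, then $\sigma_{i,j,k,0}$, then $\sigma_{0,0,r,0}$ to the generators $X$, $Y$, $\zeta$, $T$ (or a convenient power $T^\rho$), and read off that the result is again of the form $\sigma_{u',v',s',0}$ with $r$-component $1$, hence fixes $\mu_N$ pointwise relative to the base change by $r$. The effect on $X$ and $Y$ is immediate ($\zeta^i\mapsto\zeta^{ri}$, etc.), and the effect on $T$ follows from the transformation formula for $\sigma(T^\rho)$ already recorded, combined with the $N$-th-power identity $X^N=\prod_{j}(-Y+\zeta^j)$ and the normalization $\prod_j\zeta^{v\{j\}}=1$ used in the proof of that formula. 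The bookkeeping of the exponents $(\rho\{i/\rho\}-\{i\})/N$ under the change of variable $i\mapsto ri$ is exactly the same manipulation that appears in the proof that $r\mapsto\sigma_{0,0,r,0}$ is a section, so it goes through verbatim.

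The main obstacle — really the only subtlety — is the careful tracking of the $2$-cocycle data: the central term $z$ (equivalently the exponent $s$ in $\sigma_{u,v,r,s}$) transforms nonlinearly, and one must check that the correction terms $\prod_i(-Y+\zeta^i)^{(\cdots)/N}$ remain genuine functions in the field $K$ (i.e. that the relevant exponents are integral) after conjugation by $\sigma_{0,0,r,0}$. This is guaranteed by Rohrlich's theorem, which is what makes the exponents $(\rho\{i/\rho\}-\{i\})$ divisible by $N$ in the first place, so invoking it once more closes the argument. I would then conclude that the descent datum of the $H_{\Z/N\Z}$-action is fixed by $S$, hence the action descends to $\Q(\mu_N)$, which is the assertion of the lemma.
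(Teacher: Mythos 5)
Your reduction of the lemma to a conjugation statement in $G$ is aimed at the wrong target, and the step that carries the actual content of the lemma is assumed rather than proved. ``The action is defined over $\Q(\mu_N)$'' means that each individual automorphism induces a $\Q(\mu_N)$-algebra automorphism of the function field $K$; it does \emph{not} ``amount to'' the condition that conjugation by the section $S$ permutes the automorphisms compatibly with the Galois action on $\mu_N$. That latter condition is the Galois-equivariance of the whole family over $\Q$, i.e.\ it is relevant to descending the action (or data attached to it) to $\Q$, which is a stronger and different statement than the one asked. For $\Q(\mu_N)$-rationality of each automorphism, once you know the automorphism lies in $H_N=\ker(G\to(\Z/N\Z)^\times)$ you are already done, since such an element fixes $\Q(\zeta)$ pointwise; the conjugation computation that occupies most of your proposal adds nothing toward the lemma.

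The genuine issue is precisely the identification you make in passing: you assert that $(i,j,k)\in H_{\Z/N\Z}$ ``acts on $\zeta$ trivially --- that is, it lies in $H_N$.'' But the $H_{\Z/N\Z}$-action is defined analytically, as the deck group of $X'_N\to X(2)$, i.e.\ as automorphisms of $\C(F'_N)$ over $\C(\lambda)$; on that field $\zeta$ is a constant and is fixed by everything, so ``fixes $\zeta$'' says nothing. What must be checked is that these automorphisms preserve the subfield $K=\Q(\zeta)(F'_N)$ of $\C(F'_N)$ at all. Since $K$ is generated over $\Q(\zeta)(F_N)$ by an $N$-th root $g$ of $f_A$, and the action on $\Q(\zeta)(F_N)$ is visibly rational, the whole point is to show $g_{|U}\in K$, which requires Rohrlich's theorem to see that $f_{A|U}/f_A$ is an $N$-th power in $\Q(\zeta)(F_N)$ (your appeal to Rohrlich for the integrality of $\rho\{i/\rho\}-\{i\}$ is misplaced: that divisibility is elementary). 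Your proposal could be repaired by replacing the conjugation argument with either this direct computation, or a degree count: $[K:\Q(\zeta)(\lambda)]=N^3=|H_N|$, so if one has already exhibited $N^3$ distinct $\Q(\zeta)$-automorphisms $\sigma_{u,v,1,s}$ of $K$ over $\Q(\zeta)(\lambda)$, their base change to $\C$ exhausts the deck group and the lemma follows --- but that exhibition is again the same computation with $f_A$, so it cannot be skipped.
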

\begin{proof}
For $U\in H_{\Z/N\Z}$, one needs to check that the action of $U$ on a $\Q(\zeta)$-rational function is still  $\Q(\zeta)$-rational. It suffices to verify this for $g$, an $N$-th root of $f_{A}$. We repeat a previous calculation and get 
$$
g^{N}_{|U}=g^N\prod_{i\in\Z/N\Z}(-y+\zeta^{i})^{r}\prod_{i\in\Z/N\Z}(-y+\zeta^{i})^{t},
$$
with $t\in N\Z$. Both factors $\prod_{i\in\Z/N\Z}(-y+\zeta^{i})^{r}$ and $\prod_{i\in\Z/N\Z}(-y+\zeta^{i})^{t}$ are $N$-th power, (of, say, $g_{1}$ and $g_{2}$) in the function field of $F_{N}$ over $\Q(\zeta)$.
We thus find that $g_{|U}$ is equal to $\zeta^{p}gg_{1}g_{2}$, which belongs to the function field of $X_{N}$ over $\Q(\zeta)$. 

\end{proof}

\begin{remark}
We do not give an explicit algebraic model of the curve $X''_{N}$. But it can be obtained by taking $N''$-th roots of the functions whose divisors are $\sum_{i=0}^{N-1}i^{2}([a_{i}]-[P])\}$,  $\sum_{i=0}^{N-1}i^{2}([b_{i}]-[P])\}$ and  $\sum_{i=0}^{N-1}i^{2}([c_{i}]-[P])\}$. 

\end{remark}
\subsection{Regular integral models of Heisenberg curves}
We relate the Heisenberg covering to the model given in the introduction. 
Note that the inhomogeneous version of the projective model is given by:
$$
X^{N}+Y^{N}=1
$$ 
and, for every primitive $N$-th root of unity $\zeta$ (in a fixed cyclotomic extension of ${\bf Q}$)
$$
\prod_{j=1}^{(N-1)/2}(Y-\zeta^{-j})^{j}T_{\zeta}^{N}=\prod_{j=1}^{(N-1)/2}(Y-\zeta^{j})^{j}.
$$
We now prove Theorem~\ref{Modelthm} (in the spirit of {\it e.g.}  \cite[Proposition 1.1.13]{Curilla}).
\begin{proof}
We have indeed a scheme of relative dimension $1$ over ${\rm Spec}({\Z}[\mu_{N},1/N])$. We just have to establish the smoothness.
We use the Jacobian criterion ({\it e.g.}  \cite[p. 130, Theorem 2.19]{MR1917232}).

Since the Heisenberg covering is obtained by taking the $N$-th root of a function which does not vanish outside the zero locus of $XY$, all points away from $X=0$ and $Y=0$ are regular in all characteristics prime to $N$.
It remains to establish the regularity of a point $P_{\zeta}$ of the form $(X,Y)=(0,\zeta)$ or $(X,Y)=(\zeta,0)$ with $\zeta$ a primitive $N$-th root of unity. Suppose  $(X,Y)=(0,\zeta)$. For points of this form, one has $T_{\zeta}=0$.
We set
\[
F_{\zeta}=\prod_{j=1}^{(N-1)/2}(Y-\zeta^{-j})^{j}T_{\zeta}^{N}-\prod_{j=1}^{(N-1)/2}(Y-\zeta^{j})^{j}
\]
and compute the partial derivative at $P_{\zeta}$. One obtains
\[
\frac{\partial
F_{\zeta}}{\partial Y}(P_{\zeta})=-
\prod_{j=2}^{(N-1)/2}(\zeta-\zeta^{j})^{j},
\]
which is a cyclotomic unit that belongs to ${\bf Z}[\mu_{N},1/N]^{\times}$. Thus the jacobian matrix is non-zero over any fiber in characteristic prime to $N$. This is sufficient to establish the regularity of $P_{\zeta}$ over ${\rm Spec}({\bf Z}[\mu_{N},1/N])$ since the structural morphism is of relative dimension $1$.
This reasoning applies to the zero locus of $Y$, by exchanging the roles of $X$ and $Y$.
\end{proof}

\subsection{Cusps}
The term cusp refers to the cusps of the modular curve $X_\Ga$ attached to a subgroup $\Ga$ of $\Ga(2)$. These points are not intrinsic to the corresponding algebraic curves.  In the cases of interest to us, we have a morphism $X_N\rightarrow X(2)$, given by the function $x^N$. Thus the cusps of $F_N$ are the $3N$ points above the points $0$, $1$, and $\infty$ described above.

Since the morphism $F'_N\rightarrow F_N$ is unramified, the modular curve $F'_N$ possesses $3N^2$ cusps. 

\begin{prop}
If $N$ is prime to $3$, the cusps of $F'_N$ are defined over $\Q(\zeta)$. If $3$ divides $N$, the cusps of $F'_N$ are defined over the cyclotomic field generated by the $3N$-th roots of unity.
\end{prop}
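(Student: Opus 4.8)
The strategy is to determine, for each cusp $P$ of $F_N$, the residue fields of the cusps of $F'_N$ above $P$ by exhibiting the unramified covering $F'_N\to F_N$ locally near $P$ as an \'etale Kummer covering and computing the corresponding class in $\Q(\mu_N)^\times/(\Q(\mu_N)^\times)^N$. By the corollaries above, over $\Q(\mu_N)$ the Heisenberg covering is obtained by adjoining an $N$-th root $T$ of $f_A=\prod_{m\in\Z/N\Z}(-y+\zeta^m)^{\{m\}}$, and $\operatorname{div}(f_A)=ND_A$ with $D_A=\sum_i\{i\}[a_i]$ supported on the cusps $a_i$ only, since $\sum_i\{i\}=0$. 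For $N$ odd one has $x^N=\prod_k(\zeta^k-y)$, and $\operatorname{ord}_{a_i}(y-\zeta^i)=N$, so $x$ is a uniformizer at $a_i$ (defined over $\Q$), and its pullback remains a uniformizer above $a_i$ because the covering is \'etale. Thus near any cusp $P$, writing $f_A=u_P\,t_P^{Nm_P}$ with $t_P$ a $\Q(\mu_N)$-rational uniformizer and $m_P$ the order of $D_A$ at $P$ (so $m_P=\{i\}$ if $P=a_i$ and $m_P=0$ if $P=b_j$ or $c_j$), the covering is \'etale near $P$, given by $S_P^N=u_P$ with $S_P=T/t_P^{m_P}$; hence every cusp of $F'_N$ above $P$ has residue field $\Q(\mu_N)(u_P(P)^{1/N})$ — a well-defined extension, since $\mu_N\subset\Q(\mu_N)$ and the class of $u_P(P)$ modulo $N$-th powers is independent of the uniformizer.

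The three values are computed as follows. For $P=c_j=(\epsilon\zeta^j:1:0)$ one rewrites $\prod_m(\zeta^m-y)^{\{m\}}=\prod_m(1-\zeta^m/y)^{\{m\}}$ (again using $\sum_m\{m\}=0$) and lets $1/y\to 0$, obtaining $u_P(P)=f_A(c_j)=1$; these cusps are $\Q(\mu_N)$-rational. For $P=b_j=(\zeta^j:0:1)$, $f_A$ is a unit and $u_P(P)=f_A(b_j)=\prod_m\zeta^{m\{m\}}=\zeta^{\sum_k k^2}=\zeta^{N(N^2-1)/12}$, the sum being over $k\in\{-(N-1)/2,\dots,(N-1)/2\}$. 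For $P=a_i$, from $f_A/x^{N\{i\}}=\prod_{m\ne i}(\zeta^m-y)^{\{m\}-\{i\}}$ one gets $u_P(P)=\prod_{m\ne i}(\zeta^m-\zeta^i)^{\{m\}-\{i\}}$; writing $\zeta^m-\zeta^i=\zeta^i(\zeta^{m-i}-1)$, using $\sum_{m\ne i}(\{m\}-\{i\})=-N\{i\}$ to clear the $\zeta^i$-prefactor, and $\{l+i\}-\{i\}\equiv\{l\}\pmod N$ to absorb the carries into an $N$-th power, one finds $u_P(P)\equiv w:=\prod_{l\ne 0}(\zeta^l-1)^{\{l\}}$ modulo $N$-th powers, independent of $i$; pairing $l$ with $-l$ then gives $w=(-1)^{(N^2-1)/8}\,\zeta^{N(N^2-1)/24}$, whose square is $f_A(b_j)$.

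To conclude: since $N$ is odd, $-1=(-1)^N$ is an $N$-th power in $\Q$, so $w\equiv\zeta^{N(N^2-1)/24}$ modulo $N$-th powers. If $3\nmid N$, then $24\mid N^2-1$, so $N$ divides both $N(N^2-1)/24$ and $N(N^2-1)/12$; hence $w$ and $f_A(b_j)$ are trivial modulo $N$-th powers, and all $3N^2$ cusps of $F'_N$ are defined over $\Q(\mu_N)=\Q(\zeta)$. If $3\mid N$, then the $3$-adic valuation of $N(N^2-1)/24$ equals $v_3(N)-1$ while the prime-to-$3$ part of $N$ still divides it, so $\zeta^{N(N^2-1)/24}$, and with it $f_A(b_j)$, has exact order $3$; because the $N$-th power map on $\mu_{3N}$ is surjective onto $\mu_3$, every $N$-th root of an element of $\mu_3$ lies in $\mu_{3N}$, so $u_P(P)^{1/N}\in\Q(\mu_{3N})$ in all three cases, and every cusp of $F'_N$ is defined over $\Q(\mu_{3N})$.

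The main point of difficulty is the $a_i$-computation together with its arithmetic aftermath: first collapsing the products $\prod_{m\ne i}(\zeta^m-\zeta^i)^{\{m\}-\{i\}}$ to the single class $w$ independent of $i$, then pinning down $v_3(N(N^2-1)/24)=v_3(N)-1$; it is exactly this one-unit drop in the $3$-adic valuation that obstructs defining the cusps over $\Q(\mu_N)$ and forces $\Q(\mu_{3N})$ when $3\mid N$. By contrast the \'etale Kummer reduction, the cases $b_j,c_j$, and the preliminary observation that for $N$ odd the cusps $a_i,b_j,c_j$ of $F_N$ itself are $\Q(\mu_N)$-rational (as $\epsilon=\zeta^{(N+1)/2}\in\Q(\mu_N)$) are all routine.
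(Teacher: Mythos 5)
Your proof is correct, and its arithmetic core coincides with the paper's: both reduce the question to deciding whether the sixth root of unity $(-1)^{(N^2-1)/8}\zeta^{N(N^2-1)/24}$ (your $w$, the paper's $f_A(a_0)$) acquires an $N$-th root only after adjoining $\mu_{3N}$, which comes down to $24\mid N^2-1$ when $3\nmid N$ versus $v_3\bigl(N(N^2-1)/24\bigr)=v_3(N)-1$ when $3\mid N$. Where you genuinely diverge is in how the answer at one cusp is propagated to all $3N^2$ of them: the paper evaluates $f_A$ only at $a_0$ --- the one cusp in that fiber where $f_A$ is already a unit, since $\{0\}=0$ --- and then invokes the preceding lemma that the transitive action of $H_{\Z/N\Z}$ on each fiber of cusps is defined over $\Q(\zeta)$, so all cusps in an orbit share a field of definition; you instead compute the local Kummer class $u_P(P)$ modulo $(\Q(\mu_N)^\times)^N$ at every cusp $P$ of $F_N$ separately. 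Your route costs the extra bookkeeping at the $a_i$ with $i\ne0$ (clearing the $\zeta^i$ prefactor via $\sum_{m\ne i}(\{m\}-\{i\})=-N\{i\}$ and absorbing the carries $\{l+i\}-\{i\}-\{l\}\in N\Z$ into an $N$-th power), but it is self-contained --- it does not need the $\Q(\zeta)$-rationality of the automorphisms --- and it yields strictly more than the stated proposition: the cusps above the $c_j$ are always $\Q(\mu_N)$-rational, and when $3\mid N$ the residue field at the cusps above the $a_i$ and $b_j$ is exactly $\Q(\mu_{3N})$, not merely contained in it.
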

\begin{proof}
Let $a$ be a cusp of $F'_{N}$ above $a_{0}$. It is defined over the field generated by $g(a)$ and $\Q(\zeta)$, where $g$ is an $N$-th root of $f_{A}$.
One has 
$$
f_{A}(a_{0})=\prod_{i\in\Z/N\Z}(-1+\zeta^{i})^{\{i\}}=\prod_{i=1}^{(N-1)/2}\frac{(-1+\zeta^{i})^{i}}{(-1+\zeta^{-i})^{i}}=\prod_{i=1}^{(N-1)/2}(-\zeta^{i})^{i}.
$$
Thus we get
$$
f_{A}(a_{0})=(-1)^{\sum_{i=1}^{(N-1)/2}i}\zeta^{\sum_{i=1}^{(N-1)/2}i^2}=(-1)^{(N^2-1)/8}\zeta^{(N-1)(N+1)N/24},
$$
which is a $6$-th root of unity.

Suppose $N$ is prime to $3$. Then $g(a)$ is an $N$-th root of unity, up to sign. Since the group $H_{N}$ acts transitively on the cups above $\infty$, and its action is $\Q(\zeta)$-rational, we deduce that all the cusps above $\infty$ are defined over $\Q(\zeta)$. A similar reasoning apply to the cusps above $0$, and above $1$. 

A similar reasoning applies when $3$ divides $N$. Indeed $g(a)$ is a $3N$- th root of unity, up to sign.

\end{proof}

The cusps of $F'_N$ above the cusp $\infty$ (resp. $0$, resp. $1$) of $X(2)$ coincide with the classes $\Ga\backslash\Ga(2)\infty$ (resp.  $\Ga\backslash\Ga(2)0$, resp.  $\Ga\backslash\Ga(2)1$), which in turn can be identified with the double classes  $\Ga\backslash\Ga(2)/A^{\Z}$ (resp. $\Ga\backslash\Ga(2)/B^{\Z}$, resp. $\Ga\backslash\Ga(2)/(AB^{-1})^{\Z}$).

\subsection{About the Manin-Drinfeld principle for $F'_3$}
Recall that $g_{3}=1$ and observe that $F'_{3}$ has 27 cusps. Fix one cusp $P_{0}$ of $F'_{3}$, which becomes thus an elliptic curve $(F'_{3}, P_{0})$. Since a cyclic group of order $3$ acts on $F'_{3}$ and stabilizes $P_{0}$, the elliptic curve $(F'_{3}, P_{0})$ admits an automorphism of order $3$. Thus the $j$-invariant of $(F'_{3}, P_{0})$ is $0$.
\begin{prop}
Divisors supported on the cusps of $F'_{3}$ above $\infty$ (resp. $0$, resp. $1$) are of order dividing $3$ in the Jacobian of $F'_{3}$. Furthermore, cuspidal divisors of degree $0$ are torsion in the Jacobian of $F'_3$, and of order dividing $9$.
\end{prop}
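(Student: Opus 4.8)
The plan is to use that $F'_3$ is an elliptic curve of $j$-invariant $0$, hence has complex multiplication by $\Z[\omega]$ for $\omega$ a primitive cube root of unity. Concretely, choose a cusp of $F'_3$ as origin, so that $J'_3:=\mathrm{Jac}(F'_3)$ is identified with $F'_3$; the action of $H_{3,3,3}={\bar\Ga}(2)/\Phi'_3$ on $F'_3$ then provides an automorphism $\sigma$ of order $3$, which in $\End(J'_3)$ satisfies $\sigma^2+\sigma+1=0$, so that $\Z[\sigma]=\Z[\omega]=\End(J'_3)$. Since $H_{3,3,3}$ permutes the cusps lying over any fixed cusp of $X(2)$ and preserves the three types, the subgroups $\cC_\infty$, $\cC_0$, $\cC_1$ of $J'_3$ generated by the classes of degree-zero divisors supported on cusps of a single type are $\Z[\omega]$-submodules of $J'_3$.

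I first treat the cusps above $\infty$. Let $\phi\colon F'_3\to F_3$ be the Heisenberg covering; it is étale of degree $3$, and once origins are chosen compatibly it becomes an isogeny of elliptic curves, so that $\ker\phi$ has order $3$ and $\phi^{*}\phi_{*}$ is multiplication by $3$ on $J'_3$. The map $\phi$ carries the cusps of $F'_3$ above $\infty$ to the three cusps $c_0,c_1,c_2$ of $F_3$ above $\infty$, so $\phi_{*}$ sends $\cC_\infty$ into the subgroup of $J_3:=\mathrm{Jac}(F_3)$ generated by the $[c_i]-[c_j]$. By Rohrlich's theorem that subgroup is killed by $3$, and the relation $\sum_i[c_i]=3[P]$ gives $[c_2]-[c_0]=-([c_1]-[c_0])$, so it is cyclic of order at most $3$. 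Hence $|\phi_{*}(\cC_\infty)|\le 3$, while $\cC_\infty\cap\ker\phi$ has order $\le 3$; so $|\cC_\infty|\le 9$, and the exact sequence $0\to\cC_\infty\cap\ker\phi\to\cC_\infty\to\phi_{*}(\cC_\infty)\to 0$ shows $\cC_\infty$ is killed by $9$. In particular $\cC_\infty$ is a finite $3$-group.

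Now comes the only delicate point: upgrading ``killed by $9$'' to ``killed by $3$''. Write $\pi=1-\omega$, so $3=-\omega^2\pi^2$ and $|\Z[\omega]/(\pi^k)|=3^k$. A $\Z[\omega]$-module that is a $3$-group of order $\le 9$ must be isomorphic to one of $0$, $\Z[\omega]/(\pi)$, $\Z[\omega]/(\pi^2)$, $(\Z[\omega]/(\pi))^2$, each of which is killed by $\pi^2$, hence by $3$. (Equivalently, a cyclic group of order $9$ admits no $\Z[\omega]$-module structure, since $t^2+t+1$ has no root modulo $9$.) Therefore $\cC_\infty$ is killed by $3$. The same argument applies verbatim to the cusps above $0$ and above $1$, using $\{a_0,a_1,a_2\}$, $\{b_0,b_1,b_2\}$ and the Rohrlich relations $\sum_i[a_i]=3[P]$, $\sum_i[b_i]=3[P]$. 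This proves the first assertion.

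For the last assertion, fix a cusp $P_\infty$, $P_0$, $P_1$ of each type. Grouping an arbitrary degree-zero cuspidal divisor according to type shows that its class lies in $\cC_\infty+\cC_0+\cC_1$ plus a $\Z$-linear combination of $(P_\infty)-(P_0)$ and $(P_\infty)-(P_1)$. For such a difference $D$, the divisor $\phi_{*}D$ is cuspidal of degree zero on $F_3$, hence killed by $3$ by Rohrlich; applying $\phi^{*}$ and using $\phi^{*}\phi_{*}=[3]$ gives $9D=0$. Combined with the first assertion (which gives that the $\cC_t$-part is killed by $3$), every degree-zero cuspidal divisor on $F'_3$ is torsion of order dividing $9$. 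The main obstacle throughout is the step from order $9$ to order $3$: the isogeny bounds only yield $9$, and one genuinely needs the complex multiplication, since a priori $\cC_\infty$ might have been cyclic of order $9$; the remaining steps are bookkeeping with the three types of cusps and with Rohrlich's relations.
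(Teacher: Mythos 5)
Your proof is correct, and for the second assertion it is essentially the paper's argument: reduce to $F_3$ via the degree-$3$ \'etale covering $\phi$, pick up one factor of $3$ from Rohrlich's theorem on $F_3$ and a second from the covering degree (the paper phrases this as the decomposition $3((\alpha)-(\beta))=(3(\alpha)-\phi^{*}(a))+\phi^{*}((a)-(b))+(\phi^{*}(b)-3(\beta))$, each term being $3$-torsion). For the first assertion your route is genuinely different. The paper notes that the composite $H_{\Z/3\Z}\to\mathrm{Aut}(X'_3)\to\mathrm{Aut}(J'_3)$ has kernel of order $9$ and exponent $3$, hence equal to $J'_3[3]$ acting by translations; the $H_{\Z/3\Z}$-orbit of a cusp, which is exactly the set of nine cusps of its type, is therefore a full coset of $J'_3[3]$, so differences of cusps of the same type are visibly killed by $3$. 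You instead bound the order of your $\cC_\infty$ by $9$ using $\phi_{*}$ together with Rohrlich's relation $\sum_i[c_i]\sim 3[P]$, and then rule out a cyclic group of order $9$ by exhibiting $\cC_\infty$ as a $\Z[\omega]$-module (all the intermediate claims --- stability of $\cC_\infty$ under $\sigma_{*}$, $|\ker\phi_{*}|=3$, and the classification of $3$-groups of order at most $9$ with a $\Z[\omega]$-action --- check out). The paper's argument is shorter and yields the stronger geometric fact that each fibre of cusps is a torsor under $J'_3[3]$, exploiting the full order-$9$ translation subgroup of $H_{3,3,3}$ and its exponent $3$; yours uses only the order-$3$ automorphism giving the complex multiplication, at the cost of invoking Rohrlich's relations among the cusps of $F_3$ already in this first step.
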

\begin{proof}
Let $X$ be a compact connected Riemann surface of genus $1$. Let $J$ be the Jacobian of $X$. Recall the exact sequence
$$
0\rightarrow J({\bf C})\rightarrow {\rm Aut}(X)\rightarrow {\rm Aut}(J)\rightarrow 0,
$$
where ${\rm Aut}$ denotes the automorphisms  over ${\bf C}$. The first map associates to the class of a divisor $D$ the translation by $D$ in $X$.

For $X=F'_{3}$, the group ${\rm Aut}(J)$ is cyclic of order $6$. One gets a group homomorphism $H_{{\bf Z}/3{\bf Z}}\rightarrow {\rm Aut}(J)$, whose kernel is of order $9$, and therefore isomorphic to $({\bf Z}/3{\bf Z})^{2}$. 
Since this kernel identifies to a subgroup of  the one dimensional complex torus $J({\bf C})$, the latter subgroup is $J({\bf C})[3]$.  We have proved that the orbit of any cups $Q$ by $H_{{\bf Z}/3{\bf Z}}$ contains $Q+J({\bf C})[3]$. But these sets are both of cardinality $9$, and are therefore equal. Since $H_{{\bf Z}/3{\bf Z}}$ acts transitively on the cusps above $\infty$ (resp. $0$, resp. $1$), the first statement of the proposition is proved.

About the second statement, it is sufficient to prove this for a divisor of the form $(\alpha)-(\beta)$ where $\alpha$ and $\beta$ are cusps of $F'_3$ not above the same point of $\{0,1,\infty\}$. Without loss of generality, say they are above $0$ and $\infty$ respectively. Let $a$ and $b$ be the cusps of $F_3$ below $\alpha$ and $\beta$ respectively.

We have 
$$
3((\alpha)-(\beta))=(3(\alpha)-\sum_{\alpha'}(\alpha'))+(\sum_{\alpha'}(\alpha')-\sum_{\beta'}(\beta'))+(\sum_{\beta'}(\beta')-3(\beta)),
$$
where $\alpha'$ (resp. $\beta'$) runs through the cusps of $F'_3$ above $a$ (resp. $b$). By the first statement of the proposition and the torsion properties of the cuspidal subgroup of the $F_3$, each of the three terms of the right-hand side is of order dividing $3$. 
\end{proof} 

Recall that the dessin for $X'_{N}$ is a graph with the following additional structure: the vertices are bicolored (white and black) and the of set edges attached to any given vertex are endowed with a cyclic ordering (a transitive action of $\Z$). 
The vertices are the cusps of $X'_{N}$ above $0$ and $\infty$. The edges form the coset $\Phi'_{N}\backslash\Ga(2)\simeq H_{\Z/N\Z}$, which is in bijection with $(\Z/N\Z)^{3}$. the edge associated with $\Phi'_{N}g$ has extremities  $\Phi'_{N}g0$ and $\Phi'_{N}g\infty$. 
The cyclic ordering of the edges attached to the vertices $\Phi'_{N}g0$ (resp. $\Phi'_{N}g\infty$ ) is given by the action of $B$ (resp. $A^{-1}$). To sum up the dessin can be drawn on $X'_{N}$.

To be more concrete, each edge is in bijection with $\Z/N\Z\times\Z/N'\Z\times\Z/N\Z$, via the map $\Phi'_{N}A^{a}C^{c}B^{b}\mapsto (a,c,b)$.
The edge thus labeled $(a,c,b)$ is connected to the edge labeled $(a,c,b+1)$ via a black vertex (cusp above $0$) and the edge labeled $(a,c,b)$ is connected to the edge labeled $(a-1,c-ab,b)$ via a white vertex (cusp above $\infty$).
The line segments represent the arcs on $X'_3$ above the geodesic arc from $0$ to $\infty$ in the upper half-plane. 

We illustrate all this for $N=3$. In that case, the genus of $X'_{3}$ is equal to $1$. According to these rules, the drawing (dessin) for $X_3'$ is given as follows.
\begin{center}
\begin{tikzpicture}[mynode/.style={font=\color{red}\sffamily,circle,inner sep=1pt,minimum size=.5cm}, scale=0.6]
\node[circle,draw,radius=1em][mynode=black,fill=black] (v1) at (1,-8,0) {};
\node[circle,draw,radius=1em][mynode=black,fill=black] (v2) at (-8,-3,0) { };
\node[circle,draw,radius=.01em][mynode=black,fill=white] (v3) at (1,-3,0) {};
\node[circle,draw,radius=.01em][mynode=black,fill=black] (v4) at (10,-3,0) {};
\node[circle,draw,radius=.01em][mynode=black,fill=black] (v5) at (-1.5,-1,0) {};
\node[circle,draw,radius=.01em][mynode=black,fill=black] (v6) at (3.5,-1,0) {};
\node[circle,draw,radius=.01em][mynode=black,fill=white] (v7) at (-4,1,0) {};
\node[circle,draw,radius=.01em][mynode=black,fill=white] (v8) at (1,1,0) {};
\node[circle,draw,radius=.01em][mynode=black,fill=white] (v9) at (6,1,0) {};
\node[circle,draw,radius=.01em][mynode=black,fill=black] (v10) at (-4,3,0) {};
\node[circle,draw,radius=.01em][mynode=black,fill=black] (v11) at (1,3,0) {};
\node[circle,draw,radius=.01em][mynode=black,fill=black] (v12) at (6,3,0) {};
\node[circle,draw,radius=.01em][mynode=black,fill=white] (v13) at (-1.5,4.5,0) {};
\node[circle,draw,radius=.01em][mynode=black,fill=white] (v14) at (3.5,4.5,0) {};
\node[circle,draw,radius=.01em][mynode=black,fill=white] (v15) at (-8,6,0) {};
\node[circle,draw,radius=.01em][mynode=black,fill=black] (v16) at (1,6,0) {};
\node[circle,draw,radius=.01em][mynode=black,fill=white] (v17) at (10,6,0) {};
\node[circle,draw,radius=.01em][mynode=black,fill=white] (v18) at (1,12,0) {};
\draw [black,thick]   (v15) to[out=70,in=70, distance=17cm ] (v4);
\draw [black,thick]   (v17) to[out=-70,in=-70, distance=17cm ] (v2);
\draw[black,very thick] (v2)--(v3);
\draw[black,very thick] (v4)--(v3);
\draw[black,very thick] (v15)--(v16);
\draw[black,very thick] (v17)--(v16);
\draw[black,very thick] (v10)--(v18);
\draw[black,very thick] (v10)--(v7);
\draw[black,very thick] (v16)--(v18);
\draw[black,very thick] (v12)--(v18);
\draw[black,very thick] (v12)--(v9);
\draw[black,very thick] (v1)--(v7);
\draw[black,very thick] (v1)--(v3);
\draw[black,very thick] (v1)--(v9);
\draw[black,very thick] (v5)--(v7);
\draw[black,very thick] (v5)--(v8);
\draw[black,very thick] (v6)--(v8);
\draw[black,very thick] (v11)--(v8);
\draw[black,very thick] (v13)--(v10);
\draw[black,very thick] (v13)--(v11);
\draw[black,very thick] (v14)--(v11);
\draw[black,very thick] (v14)--(v12);
\draw[black,very thick] (v6)--(v9);
\draw [black,thick]   (v2) to[out=90,in=80, distance=6cm ] (v14);
\draw [black,thick]   (v15) to[out=-110,in=-70, distance=9cm ] (v6);
\draw [black,thick]   (v13) to[out=90,in=70, distance=11cm ] (v4);
\draw [black,thick]   (v17) to[out=-30,in=-30, distance=7cm ] (v5);
\end{tikzpicture}
\end{center}

As the group $\Phi_{3}$ is not a congruence subgroup \cite{MR1091611}, $\Phi'_{3}$ is {\it a fortiori} not a congruence subgroup. The latter fact can be derived alternately from Wolfart's criterion and an examination of the dessin.
Indeed, the width of the each cusps is equal to $6$ and $[\Ga(2):\Phi'_3]=27$.  
Wolfart's criterion, to check that $\Phi'_3$ is a congruence subgroup or not it is 
enough to check $\Ga(6) \subset \Phi'_3 \subset \Ga(2)$. However, $[\Ga(2):\Ga(6)]=144$ is not divisible by the index  $27$.

\subsection{About the failure of the Manin-Drinfeld principle for $F'_5$}
Suppose $N=5$. We show that the Manin-Drinfeld theorem fails for the Heisenberg covering ${\mathcal{F}}'_5$ of ${\mathcal{F}}_5$. 
Consider the scheme ${\mathcal{C}}_5$ over ${\rm \Spec}({\bf Z}[\mu_{5},1/5])$ given by the system of equations 
$$
T_\zeta^5=\frac{(-Y+\zeta)(-Y+\zeta^2)^2}{(-Y+\zeta^{-1})(-Y+\zeta^{-2})^2},
$$
where $\zeta$ runs through the primitive $5$-th roots of unity in $\Q(\mu_{5})$. It is smooth, as can be shown by applying the Jacobian criterion. 
One has an obvious morphism of schemes ${\mathcal{F}}'_5\rightarrow {\mathcal{C}}_5$.

Denote by $C_{5}$ the generic fiber of ${\mathcal{C}}_5$. Over $\C$, $C_5$ identifies to a modular curve as follows. 
Consider the morphism $\bar\Ga(2)\rightarrow H_{5,5,5}$ and the inverse image $\Gamma'$ of the subgroup of $H_{5,5,5}$ generated by $B$. 
Then $\Gamma'$ defines a corresponding modular curve isomorphic to the Riemann surface $C_5(\C)$. 

The curve $C_{5}$ possesses $19$ cusps, given by the following planar coordinates $(Y,T_{\zeta})$ : $(0,\epsilon)$, $(\infty, \epsilon)$, $(1, -\delta)$, $(\zeta,0)$, $(\zeta^2,0)$, $(\zeta^{-1},\infty)$, $(\zeta^{-2},\infty)$, where $\epsilon$ and $\delta$ run through the $5$-th roots of unity.  
Set $T=T_{\zeta}$. The function fields of $F'_{5}$ and $C_{5}$ are ${\bf Q}(\zeta,X,Y,T)$ and ${\bf Q}(\zeta,Y,T)$ respectively.

The obvious morphism $\pi$ : $F'_5\rightarrow C_5$ sends the cusps of $F'_5$ to the cusps of $C_5$. We show that $C_5$ does not satisfy the Manin-Drinfeld principle. 
Since ${\mathcal{C}}_5$ is smooth, the Jacobian of $C_5$ extends to an abelian scheme ${\mathcal{J}}_5$
over ${\rm \Spec}({\bf Z}[\mu_5,1/5])$.

\begin{prop}
There exists a divisor of infinite order supported on the cusps of $C_{5}$.
\end{prop}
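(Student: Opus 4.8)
The curve $C_5$ is the smooth projective model of the $\mu_5$-cover $T^5=f(Y)$ of the $Y$-line, with $f(Y)=\frac{(-Y+\zeta)(-Y+\zeta^2)^2}{(-Y+\zeta^{-1})(-Y+\zeta^{-2})^2}$ and $\mu_5=\langle\sigma\rangle$ acting by $\sigma(T)=\zeta T$. It is branched exactly over $Y\in\{\zeta,\zeta^2,\zeta^{-1},\zeta^{-2}\}$, each such fibre a single point of ramification index $5$, so Riemann--Hurwitz over the $Y$-line gives $2g(C_5)-2=5\cdot(-2)+4\cdot 4$, i.e.\ $g(C_5)=4$. Of the $19$ cusps, the $15$ lying over $Y\in\{0,1,\infty\}$ are permuted freely by $\mu_5$ (three orbits of size $5$), while the $4$ lying over $Y\in\{\zeta,\zeta^2,\zeta^{-1},\zeta^{-2}\}$ are exactly the fixed points of $\sigma$. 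The plan is to show directly that the cuspidal subgroup $\mathcal{C}$ of the Jacobian has positive rank by pinning down its non-trivial $\mu_5$-isotypic part, and then to certify a specific non-torsion class by a reduction computation.

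The first step uses the $\mu_5$-action. As a module over the group ring of $\mu_5$, the lattice $\Z[\mathrm{cusps}]$ is a sum of three copies of the regular representation (the free orbits over $0,1,\infty$) and four trivial summands (the fixed cusps); moreover $H^1(C_5)^{\mu_5}=H^1(\mathbf{P}^1_Y)=0$, so $\mathcal{J}_5$ carries an action of $\Z[\zeta]$ and is entirely ``non-trivial isotypic''. Hence the part of $\Z[\mathrm{cusps}]^0\otimes\Q$ killed by $1+\sigma+\cdots+\sigma^4$ has dimension $3\cdot[\Q(\zeta):\Q]=12$, and it is the only part that can map non-torsionally to $\mathcal{J}_5$. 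On the other hand, every function on $C_5$ whose divisor is supported on the cusps that one writes down — the monomials $Y^a(Y-1)^b\prod_k(Y-\zeta^k)^{c_k}T^e$ — has divisor in the \emph{trivial} isotypic part: $\mathrm{div}(Y)$, $\mathrm{div}(Y-1)$, $\mathrm{div}(Y-\zeta^k)$ are traces of single cusps minus traces of single cusps, and $\mathrm{div}(T)$ is supported on the four fixed cusps. Thus, provided these monomials exhaust the functions supported on the cusps, the group of principal cuspidal divisors has trivial image in the $12$-dimensional isotypic piece, so $\mathcal{C}$ has rank $12$ and in particular is infinite.

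The delicate point — and the main obstacle — is exactly that proviso: there is no purely formal reason excluding a function of the shape $a(Y)+b(Y)T+\cdots$ with cuspidal divisor but non-zero non-trivial isotypic component, and ruling such functions out is equivalent to the rank statement itself, so a computation cannot be avoided. I would carry it out (and, for robustness, prefer the second form): (i) a Riemann--Roch computation on the explicit affine model of $\mathcal{C}_5$, taking a non-trivial cuspidal divisor such as $D=(0,1)-(0,\zeta)$ (a difference of two cusps over $Y=0$, which lies in the augmentation, hence non-trivial isotypic, part) and checking that $mD$ is not principal for the small value of $m$ allowed by a torsion bound; or (ii) a reduction argument: since $\mathcal{C}_5$ is smooth over $\Z[\zeta,1/5]$, $\mathcal{J}_5$ is an abelian scheme, so at a prime $\mathfrak{p}\nmid 5$ the specialization map is injective on the prime-to-$p$ torsion; computing $\#\mathcal{J}_5(k(\mathfrak{p}))$ by point-counting on $C_5/k(\mathfrak{p})$ and $\mathrm{ord}(\bar D)$ in $\mathcal{J}_5(k(\mathfrak{p}))$ for the first few $\mathfrak{p}$, one observes that the least common multiple of these orders does not divide $\gcd_{\mathfrak{p}}\#\mathcal{J}_5(k(\mathfrak{p}))$ — impossible if $D$ were torsion, since then $\mathrm{ord}(\bar D)$ would divide the order of $D$, which in turn divides every $\#\mathcal{J}_5(k(\mathfrak{p}))$. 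Either computation exhibits the cuspidal divisor of infinite order, and together with the isotypic picture it shows the failure of Manin--Drinfeld for $C_5$ is as large as possible on the non-trivial part of $\mathcal{J}_5$.
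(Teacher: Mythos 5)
Your isotypic framework is sound and correctly locates where a non-torsion cuspidal class must live: since $C_5/\mu_5=\mathbb{P}^1_Y$, the norm $1+\sigma+\cdots+\sigma^4$ kills the Jacobian, so only the $12$-dimensional non-trivial isotypic part of $\Z[\mathrm{cusps}]^0\otimes\Q$ can contribute, and the visible cuspidal units ($Y$, $Y-1$, $Y-\zeta^k$, $T$ and their monomials) all have divisors in the trivial part. (Your genus count $g(C_5)=4$ from Riemann--Hurwitz also appears correct; the value $6$ quoted at the end of the paper's own proof looks like a slip, immaterial there since Castelnuovo--Severi only needs $g>2$.) But the proposal has a genuine gap, and you name it yourself: the whole content of the proposition is that the non-trivial isotypic part is not exhausted by principal divisors, and you reduce this to a computation --- ``I would carry it out'' via Riemann--Roch or via orders of reductions at several primes --- that is never performed. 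No specific divisor is certified: no prime is chosen, no point count or group order is produced, no torsion bound is derived. As written, the argument proves only the conditional statement ``if the monomials exhaust the cuspidal units, then the cuspidal group has rank $\ge 12$,'' and you concede the hypothesis is equivalent to the conclusion. This is a plan for a proof, not a proof.

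For contrast, the paper closes exactly this gap without any Jacobian point-counting, by contradiction: the function $T+\zeta_1$ has divisor $(1,-\zeta_1)+(y_1,-\zeta_1)+(y_2,-\zeta_1)-D$ with $y_1,y_2$ the roots of an explicit quadratic; modulo $11$ that quadratic acquires a double root at $Y=1$, so the reductions of $3(1,\zeta_1)-D$ and $3(1,\zeta_1)-3(1,\zeta_2)$ are principal; injectivity of specialization on torsion for the abelian scheme $\mathcal{J}_5$ then forces these divisors to be principal in characteristic $0$ under the Manin--Drinfeld assumption; and reducing modulo $2$, where $T+\zeta$ has cuspidal divisor, one manufactures both a degree-$2$ and a degree-$3$ map $\bar C\to\mathbb{P}^1$, contradicting Castelnuovo--Severi for a curve of genus $>2$. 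If you wish to keep your route, you must actually exhibit the certificate: fix a divisor such as $(0,1)-(0,\zeta)$, fix two primes of good reduction of distinct residue characteristics, compute the orders of its reductions, and verify they are incompatible with torsion. Until one such computation is carried through, the proposition is not established.
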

\begin{proof}
We suppose that all cuspidal divisors are torsion in $C_{5}$. 
Our proof is organized around the following calculation.
One has 
\begin{align}
\label{fivroot}
T^5+1=\frac{(1-Y)(2Y^2+(2-2(\zeta^2+\zeta^{-2})-\zeta-\zeta^{-1})Y+2)}{(-Y+\zeta^{-1})(-Y+\zeta^{-2})^2}.
\end{align}
Let $y_1$ and $y_2$ be the roots of the polynomial $2Y^2+(2-2(\zeta^2+\zeta^{-2})-\zeta-\zeta^{-1})Y+2$. 
Let $\zeta_1$ be a $5$-th root of unity in ${\bf Z}[\mu_5]$. Consider the function $T+\zeta_1$, which divides $T^5+1$. 
The divisor of the function $T+\zeta_1$ is (in terms of planar coordinates for $(Y,T)$) : $(1,-\zeta_1)+(y_1,-\zeta_1)+(y_2,-\zeta_1)-D$, where $D=(\zeta^{-1},\infty)+2(\zeta^{-2},\infty)$.
Apparently fortuitously, this divisor is cuspidal in the fibers at $11$ and at $2$ of $C_{5}$.

\begin{lemma}
Let $\zeta_1$ and $\zeta_2$ be distinct primitive $5$-th roots of unity in $\Z[\mu_5]$. 
The divisors $3(1,\zeta_1)-3(1,\zeta_2)$ and $3(1,\zeta_1)-D$ are principal in any fiber above $11$ of $C_5$. 
\end{lemma}
\begin{proof}
In characteristic $11$, the polynomial $2Y^2+(2-2(\zeta^2+\zeta^{-2})-\zeta-\zeta^{-1})Y+2$ has the providential property of having a double root equal to $1$. Therefore the divisor of the function $T+\zeta_1$ over $\F_{11}$ is cuspidal and equal to $3(1,\zeta_1)-D$. 
It follows that the function  $(T+\zeta_1)/(T+\zeta_2)$ has divisor $3(1,\zeta_1)-3(1,\zeta_2)$.
\end{proof}

We return to the proof of the proposition. 
By our Manin-Drinfeld assumption in $C_{5}$, the divisor $3(1,\zeta_1)-D$ is torsion in the Jacobian of $C_{5}$. 
It extends to a torsion point of ${\mathcal{J}}_5$, whose order is determined in any special fiber at a prime $\pi$ of residual characteristic $p$, provided $p-1>e$, where $e$ is the ramification index at $\pi$ of the extension ${\Q}(\mu_5)|{\Q}$
\cite{MR482230}. This applies for any prime except perhaps $p=2$ and $p=5$.
The calculation for $p=11$ ensures that the divisors $3(1,\zeta_1)-3(1,\zeta_2)$ and $3(1,\zeta_1)-D$  are principal in $C_{5}$.

Therefore those divisors are principal in any special fiber of ${\mathcal{C}}_5$.
Consider any fiber $\bar{C}$ above $2$ of $C_{5}$. In that fiber, the function $T-\zeta=T+\zeta$ has divisor, in view of Equation~\ref{fivroot}, $(0,\zeta)+(1,\zeta)+(\infty,\zeta)-D_{\zeta}$, which is principal.
Thus, by taking $\zeta_{1}=\zeta$, the divisor 
\[
(3(1,\zeta)-D_{\zeta})-((0,\zeta)+(1,\zeta)+(\infty,\zeta)-D_{\zeta})=2(1,\zeta)-(0,\zeta)-(1,\zeta)
\] is principal in $\bar C$. Thus there exists $f$ : ${\bar C}\rightarrow  \sP^{1}$ of degree $2$. So $\bar{C}$ is hyperelliptic.
The principality of the divisor $3(1,\zeta_1)-3(1,\zeta_2)$ ensures that there is a degree $3$ morphism $\bar C\rightarrow \sP^{1}$. By Castelnuovo-Severi type inequalities, this imposes that the genus of $\bar {C}$ is $\le2$. But the genus of $\bar C$ equals the genus of $C_{5}$; it is equal to $6$ and we have reached a contradiction. 
 \end{proof}
 
\bibliographystyle{Crelle}
\bibliography{DEBARGHAMERELEisenstein.bib}
\end{document}